\newcommand{\fig}{\begin{figure}[htbp]\centering}
\newcommand{\efig}{\end{figure}} 
\newcommand\nothing{\varnothing}
\newcommand\HH{\widetilde{H}}
\newcommand\bb{\mathbf{b}}
\newcommand\kk{\Bbbk}
\newcommand\xx{{\mathbf x}}
\renewcommand\aa{{\mathbf a}}
    \newtheorem{thm}{Theorem}[section]
\newtheorem{lemma}[thm]{Lemma}
\newtheorem{cor}[thm]{Corollary}
\newtheorem{prop}[thm]{Proposition}
\theoremstyle{definition}
\newtheorem{example}[thm]{Example}
\newtheorem{remark}[thm]{Remark}
\newtheorem{defn}[thm]{Definition}
\newcommand\st{\mathit{ST\hspace{-.2ex}}}
\newcommand\bs{\backslash}
\newcommand\dis{\displaystyle}
\newcommand\foot{\footnotesize}
\newcommand\fillbar{\mkern-6mu\cleaders\hbox{$\mkern-2mu \mathord- \mkern-2mu$}\hfill
	\mkern-6mu \mathord-}
\newcommand\mko[1]{\makebox[0pt][c]{$#1$}}
\newcommand\edgeup[1]{/\makebox[0pt][l]{\raisebox{-.4ex}{$\!\scriptstyle#1$}}}
\newcommand\edgedown[1]{\bs\makebox[0pt][l]{\raisebox{.7ex}{$\!\scriptstyle#1$}}}
\newcommand\edgehoriz[1]{\raisebox{0pt}[0pt][0pt]{$\stackrel{#1}{\hbox{ --- }}$}}
\newcommand\monomialmatrix[3]{{
\begin{array}{@{}r@{\:}r@{}c@{}l@{}}
  \begin{array}{@{}c@{}}		
	\begin{array}{@{}r@{}}
	\\
	#1
	\end{array}\!
  \end{array}						
&
  \begin{array}{@{}c@{}}		
	\begin{array}{@{}l@{}}\\				
	\end{array}						
	\\							
	\left[\begin{array}{@{}l@{}}				
	#3							
	\end{array}\!						
	\right.							
  \end{array}							
&
  #2					
&
  \begin{array}{@{}c@{}}		
	\begin{array}{@{}l@{}}\\				
	\end{array}						
	\\							
	\left.\!\begin{array}{@{}l@{}}				
	#3							
	\end{array}						
	\right]							
  \end{array}							
\end{array}
}}
\newcommand\mkl[1]{\makebox[0pt][l]{$#1$}}
\newcommand{\casezero}{\raisebox{-0.5em}{\begin{tikzpicture}[scale=0.3, every node/.style={scale=0.7}]
    \draw[thick] (0.3,1) -- (1.7,1);
    \draw[thick] (0.3,1) -- (1,0);
    \draw[thick] (1,0) -- (1.7,1);
    \filldraw (0.3,1) circle (3pt)  node[left] {$j$};
    \filldraw (1.7,1) circle (3pt) node[right] {$i$};
    \filldraw (1,0) circle (3pt) node[right] {$k$};
\end{tikzpicture}}}
\newcommand{\caseone}{\raisebox{-0.5em}{\begin{tikzpicture}[scale=0.3, every node/.style={scale=0.7}]
    \draw[thick] (0.3,1) -- (1.7,1);
    \draw[gray!80, densely dotted] (0.3,1) -- (1,0);
    \draw[thick] (1,0) -- (1.7,1);
    \filldraw (0.3,1) circle (3pt)  node[left] {$j$};
    \filldraw (1.7,1) circle (3pt) node[right] {$i$};
    \filldraw (1,0) circle (3pt) node[right] {$k$};
\end{tikzpicture}}
}
\newcommand{\casetwo}{\raisebox{-0.5em}{\begin{tikzpicture}[scale=0.3, every node/.style={scale=0.7}]
    \draw[thick] (0.3,1) -- (1.7,1);
    \draw[gray!80, densely dotted] (0.3,1) -- (1,0);
    \draw[gray!80, densely dotted] (1,0) -- (1.7,1);
    \filldraw (0.3,1) circle (3pt)  node[left] {$j$};
    \filldraw (1.7,1) circle (3pt) node[right] {$i$};
    \filldraw (1,0) circle (3pt) node[right] {$k$};
\end{tikzpicture}}}
\newcommand{\casefour}{\raisebox{-0.5em}{\begin{tikzpicture}[scale=0.3, every node/.style={scale=0.7}]
    \draw[gray!80, densely dotted] (0.3,1) -- (1.7,1);
    \draw[gray!80, densely dotted] (0.3,1) -- (1,0);
    \draw[gray!80, densely dotted] (1,0) -- (1.7,1);
    \filldraw (0.3,1) circle (3pt)  node[left] {$j$};
    \filldraw (1.7,1) circle (3pt) node[right] {$i$};
    \filldraw (1,0) circle (3pt) node[right] {$k$};
\end{tikzpicture}}}
\newcommand{\casethree}{\raisebox{-0.5em}{\begin{tikzpicture}[scale=0.3, every node/.style={scale=0.7}]
    \draw[gray!80, densely dotted] (0.3,1) -- (1.7,1);
    \draw[gray!80, densely dotted] (0.3,1) -- (1,0);
    \draw[gray!80, densely dotted] (1,0) -- (1.7,1);
    \filldraw (0.3,1) circle (3pt)  node[left] {$j$};
    \filldraw (1.7,1) circle (3pt) node[right] {$i$};
    \filldraw[gray!60] (1,0) circle (3pt);
\end{tikzpicture}}}
\newcommand{\casethreeprime}{\raisebox{-0.5em}{\begin{tikzpicture}[scale=0.3, every node/.style={scale=0.7}]
    \draw[gray!80, densely dotted] (0.3,1) -- (1.7,1);
    \draw[gray!80, densely dotted] (0.3,1) -- (1,0);
    \draw[gray!80, densely dotted] (1,0) -- (1.7,1);
    \filldraw (0.3,1) circle (3pt)  node[left] {$j$};
    \filldraw[gray!60] (1.7,1) circle (3pt);
    \filldraw (1,0) circle (3pt) node[right] {$k$};
\end{tikzpicture}}}
\newcommand{\casefive}{\raisebox{-0.5em}{\begin{tikzpicture}[scale=0.3, every node/.style={scale=0.7}]
    \draw[thick] (0.3,1) -- (1.7,1);
    \draw[gray!80, densely dotted] (0.3,1) -- (1,0);
    \draw[gray!80, densely dotted] (1,0) -- (1.7,1);
    \filldraw (0.3,1) circle (3pt)  node[left] {$j$};
    \filldraw (1.7,1) circle (3pt) node[right] {$i$};
    \filldraw[gray!50] (1,0) circle (3pt);
\end{tikzpicture}}}
\title{Enumerative and planar combinatorics of trivariate monomial resolutions}
\author{Erika Ordog}
\date{January 2021}
\begin{document}

\begin{abstract}
    The canonical sylvan resolution is a resolution of an arbitrary monomial ideal over a polynomial ring that is minimal and has an explicit combinatorial formula for the differential.
    The differential is a weighted sum over lattice paths of weights of chain-link fences, which are sequences of faces that are linked to each other via higher-dimensional analogues of spanning trees.
    Along a lattice path in the three-variable case, these weights can be condensed to a single weight contributing to the combinatorial formula for the differential that bypasses any computation of chain-link fences.
    The main results in this paper express the sylvan matrix entries for monomial ideals in three variables as a sum over lattice paths of simpler weights that depend only on the number of specific Koszul simplicial complexes that lie along the corresponding lattice path.
    Certain entries have numerators equal to the number of lattice paths in $\mathbb{N}^2$ that follow specific restrictions.
\end{abstract}

\maketitle

\begin{singlespace}
\section{Introduction}
Resolutions of monomial ideals have been studied extensively since the introduction of the problem by Kaplansky in the early 1960s. 
There have since been dozens of constructions of monomial resolutions with combinatorial descriptions. 
In three variables, monomial ideals enjoy special properties that allow for nice combinatorial resolutions, and genericity can be determined by looking at the structure of a single matrix in a given resolution.
Staircase diagrams provide all necessary information for a combinatorial resolution for trivariate monomial ideals.
Constructions of resolutions using staircase diagrams include resolutions by planar graph, which are supported on planar graphs embedded in the staircase diagram \cite{miller-Planar2002}, and Buchberger resolutions for strongly generic monomial ideals, also given by embeddings of planar graphs in the staircase diagrams (see \cite{gm88} and \cite[Theorem 3.11]{cca}).
When a trivariate monomial ideal $I$ is primary to the homogeneous maximal ideal, the structure of the last matrix in a resolution for $I$ determines whether $I$ is generic \cite{painter15}.

Even in the three-variable case, there had not previously been a universal, canonical, minimal free resolution of monomial ideals with a closed-form, combinatorial description of the differential. 
The resolutions by planar graph in \cite{miller-Planar2002} are noncanonical, and the Buchberger resolutions only work for the case of strongly generic monomial ideals.

The canonical sylvan resolution in \cite{emo20} is the first construction of a free resolution for monomial ideals over the polynomial ring $S = \kk[x_1, \ldots, x_n]$ in any number of variables that is universal, canonical, and minimal with a closed-form combinatorial formula for the differential.
The differential mapping from the part of the free module $F_i$ generated in $\mathbb{N}^n$-degree $\bb$ to the part of $F_{i-1}$ generated in degree $\aa$ relies on combinatorics of Koszul simplicial complexes along lattice paths from $\bb$ to $\aa$ in $\mathbb{N}^n$.
For a monomial ideal $I$, the Koszul simplicial complex of $I$ in $\mathbb{N}^n$-degree $\bb$ is the set $K^{\bb}I = \{ \text{squarefree } \tau \mid \xx^{\bb - \tau} \in I \}$, and by a result of Hochster \cite{hochster77} (see \cite{cca}, Theorem 1.34), the free modules in a minimal free resolution of $I$ can be written $F_i = \bigoplus_{\bb \in \mathbb{N}^n} \widetilde{H}_{i-1}(K^{\bb}I;\kk)$.
The canonical sylvan homomorphisms between the free $S$-modules
\[ \bigoplus\limits_{\mathbf{a} \in \mathbb{N}^n} \widetilde{H}_{i-1} (K^{\mathbf{a}}I;\kk) \otimes_{\kk} S(-\mathbf{a}) \leftarrow \bigoplus\limits_{\mathbf{b} \in \mathbb{N}^n} \widetilde{H}_i (K^{\bb}I; \kk) \otimes_{\kk} S(-\bb) \]
are induced by homomorphisms on the chains $\widetilde{C}_{i-1} (K^{\mathbf{a}}I;\kk) \leftarrow \widetilde{C}_{i} (K^{\mathbf{b}}I; \kk)$.
The differential is a weighted sum over lattice paths of weights of chain-link fences, which are sequences of faces that are linked to each other via higher-dimensional analogues of spanning trees in Koszul simplicial complexes indexed by the lattice.
The resolution results from using the Moore--Penrose pseudoinverse as the splitting for a Wall complex \cite{eag90}, and the combinatorial formula for the differential relies on a combinatorial formula for the Moore--Penrose pseudoinverse, which follows as a corollary of a result of Berg \cite[Theorem 1]{berg86} (see \cite[Theorem 5.7]{emo20}). 
The differential is expressed via sylvan matrices, whose rows and columns are indexed by faces of Koszul complexes in appropriate degrees.

In addition, there are simpler, but noncanonical, sylvan resolutions that arise via choices of higher-dimensional spanning trees at points along the lattice paths in $\mathbb{N}^n$. 
This presents the problem of classifying existing monomial resolutions: which ones can be represented as noncanonical sylvan resolutions, with certain choices of spanning trees? 
This paper shows that the resolutions by planar graph in \cite{miller-Planar2002} are indeed sylvan.

This paper shows that for the canonical sylvan resolution, the weights of chain-link fences along lattice paths can be condensed to a single weight that requires no computations of chain-link fences.
The sylvan matrix entries can then be expressed as a sum over lattice paths of weights that depend only on the number of certain Koszul simplicial complexes at points that lie along the lattice path.
These formulas are given in Theorems~\ref{compendium0to1} and \ref{threevariabletheorem}.
The computation of some of these entries can be reduced to counting lattice paths in $\mathbb{N}^2$ with certain restrictions; see Corollary~\ref{gridcounttheorem}.
Since the denominators of these values are only divisible by primes $2$ and $3$, these results can be applied to polynomial rings over any field $\kk$ such that $\text{char}(\kk) \neq 2,3$.
Examples are given in Section 5.

\subsection*{Acknowledgments}
I am grateful to Ezra Miller for helpful comments on this paper.

\section{Preliminaries}
Unlike the homology vector spaces $\widetilde{H}_i (K^{\bb}I;\kk)$, the chains $\widetilde{C}_i (K^{\bb}I;\kk)$ have canonical bases given by the $i$-faces of $K^{\bb}I$.
Since writing down a matrix for the homomorphisms of the appropriate dimension involves a choice of basis, the resolution is instead written down using sylvan matrices, where the rows and columns are indexed by the bases for the reduced chains.

\begin{defn}\label{sylvanmatricesdefn}
The \textit{sylvan matrix} $D^{\aa \bb}$ for the map $\widetilde{H}_{i-1} K^{\aa}I \leftarrow \widetilde{H}_i K^{\bb}I$ of subquotients of the chain groups is the matrix  
$$%
\HH_{i-1} K^{\aa} \!\otimes\! \langle \xx^{\aa} \rangle
\xleftarrow{
\monomialmatrix
  {\sigma_1 \\ \sigma_2\\ \vdots \\\sigma_s}
  {\begin{array}{@{}l@{\ }c@{\ }c@{\ }r@{}}
    \quad\, \tau_1 &\ \ \tau_2 &\, \, \cdots &\ \, \tau_r
    \\
       &&&
       \\&&&
       \\&&&
       \\&&&
   \end{array}}
  {\\\\\\\\}
\!\!\!}
{\HH_i K^{\bb} \!\otimes\! \langle \xx^{\bb} \rangle}
$$
where $\tau_{\ell}$ is an $i$-face in $K^{\bb}I$, $\sigma_{\ell}$ is an $(i-1)$-face in $K^{\aa}I$.
The entry $D_{\sigma \tau}^{\aa \bb}$ is the coefficient of $\sigma \otimes \xx^{\aa} \cdot \xx^{\bb - \aa}$ in $D(\tau \otimes \xx^{\bb})$, where $\widetilde{H}_{i-1}K^{\aa}I \otimes \langle \xx^{\aa} \rangle \xleftarrow{D} \widetilde{H}_i K^{\bb}I \otimes \langle \xx^{\bb} \rangle$ is the differential of the sylvan resolution.
\end{defn}

The combinatorial formula for the differential in the sylvan resolution is given as a sum of weights of chain-link fences, which are sequences of faces that are related to each other via shrubberies, stake sets, and hedges.

\begin{defn}\label{d:hedge}
Let $K_i$ be the set of $i$-faces of a simplicial complex $K$.
\begin{enumerate}
\item A \textit{shrubbery} in dimension $i$ is a subset $T_i \subseteq K_i$ whose set $\partial T_i = \{ \partial \tau \mid \tau \in T_i \}$ of boundaries is a basis for the boundaries $\widetilde{B}_{i-1}(K;\kk) = \partial(\widetilde{C}_i(K);\kk)$.
\item A \textit{stake set} in dimension $i-1$ is a set of $(i-1)$-faces $S_{i-1} \subseteq K_{i-1}$ whose complement $\bar{S}_{i-1}$ gives a basis for $\widetilde{C}_{i-1}(K;\kk)/ \widetilde{B}_{i-1}(K;\kk)$.
\item A \textit{hedge} $ST_i = (S_{i-1}, T_i)$ in $K$ of dimension $i$ is a choice of shrubbery $T_i \subseteq K_i$ and stake set $S_{i-1} \subseteq K_{i-1}$.
\end{enumerate}
\end{defn}

Note that a shrubbery is often referred to as a spanning tree or a spanning forest; see \cite{dkm09} and \cite{cck17}.

\begin{example}
When $K$ is one-dimensional, each $T_1$ is a spanning forest in the usual sense for a graph, meaning it is a spanning tree on each connected component.
In the basis for the chains $\widetilde{C}_i (K;\kk)$ consisting of $i$-faces, the faces in a stake set $S_i$ can be replaced with boundaries. 
Thus the complement $\bar{S}_0$ consists of a single vertex (a root) for each connected component of $K$.

\begin{center}
\begin{tikzpicture}[scale=0.5]
\draw[ultra thick] (0,5) -- (1,4) -- (-1,4) -- (0,5);
\draw[ultra thick] (-1,4) -- (-0.5,2.5) -- (1,4) -- (-1,4);
\draw[ultra thick] (-0.5,2.5) -- (1,2.5) -- (1,4) -- (-0.5,2.5);
\draw[ultra thick, ForestGreen] (0,5) -- (1,4) -- (-1,4) -- (-0.5,2.5) -- (1,2.5);
\filldraw (0,5) circle (3.5pt);
\node[above] at (0,5) {$a$};
\filldraw[brown] (-1,4) circle (3.5pt);
\node[left] at (-1,4) {$b$};
\filldraw[brown] (1,4) circle (3.5pt);
\node [right] at (1,4) {$c$};
\filldraw[brown] (-0.5,2.5) circle (3.5pt);
\node[left] at (-0.5,2.5) {$d$};
\filldraw[brown] (1,2.5) circle (3.5pt);
\node[right] at (1,2.5) {$e$};
\end{tikzpicture}
\begin{tikzpicture}[scale=0.5]
\node[right] at (-8,4.5) {$T_1 = \{ {\color{ForestGreen} ac,bc,bd,de} \} $};
\node[right] at (-8,3.5) {$S_0 = \{ {\color{brown} b,c,d,e} \}$};
\node[right] at (-8,2.5) {$ST_1 = (S_0, T_1)$};
\end{tikzpicture}
\end{center}
\end{example}

\begin{defn}
Fix a hedge $ST_i$, a stake set $S_i \subseteq K_i$, an $i$-face $\tau \in K_i$, and a stake $\sigma \in S_{i-1}$.
\begin{enumerate}
    \item The \textit{circuit} of $\tau$ is the unique cycle $\zeta_{T_i}(\tau) = \tau - t$, where $t \in \kk \{ T_{i} \}$.
    \item $\tau$ is \textit{cycle-linked} to $\tau'$ if $\tau'$ has nonzero coefficient in $\zeta_{T_i}(\tau)$.
    \item The \textit{shrub} of $\sigma$ is the unique chain $s(\sigma) \in \kk \{ T_i \}$ whose boundary has coefficient $1$ on $\sigma$ and $0$ on all other stakes in $S_{i-1}$.
    \item $\sigma$ is \textit{chain-linked} to $\tau'$ if $\tau'$ has nonzero coefficient in the shrub $s(\sigma)$.
    \item The \textit{hedge rim} of $\tau$ is the unique chain $r(\tau) \in \kk \{ K_i \setminus S_i \}$ such that $\tau - r(\tau)$ is a boundary in $K$.
    \item $\tau$ is \textit{boundary-linked} to $\tau'$ if $\tau'$ has nonzero coefficient in the hedge rim $r(\tau)$.
\end{enumerate}
\end{defn}

\begin{example}
Consider the simplicial complex of dimension $1$ given below.
Let $ST_1$ be the hedge $(S_0, T_1)$, where $S_0 = \{ b, c,d,e\}$ and $T_1 = \{ ac,bc,bd,de \}$.
Then the circuit of the edge $cd$ is $\zeta_{T_1} (cd) = cd + bc - bd$, the shrub of the vertex $b$ is $s(b) = bc - ac$, and the hedge rim of $b$ is $r(b) = a$.
This means that $cd$ is cycle-linked to $cd, bc$, and $bd$.
The vertex $b$ is chain-linked to the edges $bc$ and $ac$, and it is boundary-linked to the vertex $a$.
\end{example}

\begin{center}
\begin{tikzpicture}[scale=0.5]
\draw[ultra thick] (0,5) -- (1,4) -- (-1,4) -- (0,5);
\draw[ultra thick] (-1,4) -- (-0.5,2.5) -- (1,4) -- (-1,4);
\draw[ultra thick] (-0.5,2.5) -- (1,2.5) -- (1,4) -- (-0.5,2.5);
\draw[ultra thick, green!70!black] (0,5) -- (1,4) -- (-1,4) -- (-0.5,2.5) -- (1,2.5);
\draw[ultra thick, blue] (-0.5,2.5) -- (1,4);
\filldraw (0,5) circle (3.5pt);
\node[above] at (0,5) {$a$};
\filldraw[brown] (-1,4) circle (3.5pt);
\node[left] at (-1,4) {$b$};
\filldraw[brown] (1,4) circle (3.5pt);
\node [right] at (1,4) {$c$};
\filldraw[brown] (-0.5,2.5) circle (3.5pt);
\node[left] at (-0.5,2.5) {$d$};
\filldraw[brown] (1,2.5) circle (3.5pt);
\node[right] at (1,2.5) {$e$};
\end{tikzpicture}
\begin{tikzpicture}[scale=0.5]
\node[right] at (-8,4.5) {$\zeta_{T_i}({\color{blue} cd}) = cd +bc - bd $};
\node[right] at (-8,3.5) {$s({\color{brown} b}) = bc - ac$};
\node[right] at (-8,2.5) {$r({\color{brown} b}) = a$};
\end{tikzpicture}
\end{center}

\begin{defn}
Let $\Lambda (\aa, \bb)$ be the set of all saturated, decreasing lattice paths from $\bb$ to $\aa$ in $\mathbb{N}^n$.
Given a lattice path $\lambda = (\aa= \bb_j, \bb_{j-1}, \ldots, \bb_1, \bb_0=\aa) \in \Lambda(\aa,\bb)$, a \textit{hedgerow} $ST_i^{\lambda}$ on $\lambda$ consists of a stake set $S_i^{\bb} \subseteq K^{\bb}I$, a hedge $ST_i^{\mathbf{c}} \subseteq K^{\mathbf{c}}I$ for each degree $\mathbf{c} = \bb_1, \ldots, \bb_{j-1}$, and a shrubbery $T_{i-1}^{\aa} \subseteq K_{i-1}^{\aa}I$. 
\end{defn}

\begin{defn}
Given a lattice path $\lambda \in \Lambda(\aa,\bb)$,  a \textit{chain-link
fence} $\phi$ from an $i$-face $\tau$ to an $(i-1)$-face
$\sigma$ along $\lambda$ consists of a hedgerow $ST_i^{\lambda}$ and a
sequence of faces
$$%
\begin{array}{*{15}{@{}c@{}}}
\\[-3.2ex]
&&\!\!\tau_{j-1}\!\!\!&&&&\ \cdots\ &&&&\tau_1&&&&\tau_0\hbox{ --- }\tau\\
&/&&\bs&&/&&\bs&&/&&\bs&&/&\\
\sigma\hbox{ --- }\sigma_j&&&&\!\!\sigma_{j-1}\!\!\!&&&&\sigma_2&&&&\sigma_1&&
\\[-.2ex]
\end{array}
$$
in which $\tau_{\ell} \in
K_i^{\bb_{\ell}} I$, $\sigma_{\ell}
\in K_{i-1}^{\bb_{\ell}} I$, and
\begin{itemize}
    \item $\tau$ is boundary-linked to $\tau_0$ in the stake set $S_i^{\bb}$,
    \item $\sigma_\ell \in S_{i-1}^{\bb_\ell}$ is chain-linked to $\tau_\ell$ for all $\ell = 1,\ldots, j-1$,
    \item $\sigma_\ell = \tau_{\ell-1} - e_{k_\ell}$ for all $\ell = 1, \ldots, j-1$, where $e_{k_{\ell}} = \bb_{\ell-1} - \bb_{\ell}$, and
    \item $\sigma_j$ is cycle-linked to $\sigma$ in the shrubbery $T_{i-1}^{\aa}$.
\end{itemize}

Define $\Phi_{\sigma\tau}(\lambda)$ as the set of chain-link fences from $\tau$ to $\sigma$ along the lattice path $\lambda$.
\end{defn}

\begin{defn}\label{d:Delta}
Given a simplicial complex $K$, the following values are defined via the natural integral structure $B_{i-1}^{\mathbb{Z}}K$ on $\partial_i$.
$$%
  \Delta_i^T K = \sum_{T_i} \det(\partial_{T_i})^2,
  \quad
  \Delta_{i-1}^S K = \sum_{S_{i-1}} \det(\partial_{S_{i-1}})^2,
  \quad\text{and}\quad
  \Delta_i^\st K = (\Delta_{i-1}^S K) (\Delta_i^T K).
$$
\end{defn}

\begin{remark}\label{torsion}
Let $| \tilde{H}_{i-1} \langle T_i \rangle_{\text{tor}} |$ be the order of the torsion subgroup of the $(i-1)^{st}$ reduced homology of the subcomplex of $K$ whose facets are the faces of $T_i$.
Then $|\text{det}(\partial_{T_i})| = | \tilde{H}_{i-1} \langle T_i \rangle_{\text{tor}} |$, since both are equal to the product of the diagonal entries of the Smith normal form of $\partial_{T_i}$. 
Thus $\text{det}(\partial_{T_i})^2 = |\widetilde{H}_{i-1} \langle T_i \rangle_{\text{tor}} |^2$.
For the same reason, $\text{det}(\partial_{S_{i-1}})^2 = | \tilde{H}_{i-2} \langle S_{i-1} \rangle_{\text{tor}} |^2$.
In the three-variable case, the Koszul simplicial complexes are too small to contain any torsion, so $\text{det}(\partial_{T_i})^2 = \text{det}(\partial_{S_{i-1}})^2 = 1$.
See \cite[Theorem 3.3]{cck17} for the combinatorial formula for the Moore--Penrose pseudoinverse in \cite[Theorem 1]{berg86} written using these torsion subgroups.  
\end{remark}

\begin{remark}\label{delta formula}
Since in the three-variable case, $\text{det}(\partial_{T_i})^2 = \text{det}(\partial_{S_{i-1}})^2 = 1$, $\Delta_i^TK$ is equal to the number of shrubberies of dimension $i$ and $\Delta_{i-1}^SK$ is the number of stake sets of dimension $i-1$. 
Since each pair of shrubbery and stake set give a hedge, $\Delta_i^{ST}K$ is the number of hedges of $K$ of dimension $i$.
\end{remark}

\begin{defn}\label{d:DeltaLambda}
Given a hedgerow along $\lambda$, define the following:
$$%
\begin{array}{r@{\ }c@{\ }l@{\qquad\quad}r@{\ }c@{\ }l}
  \delta_{i,\bb} &=& \det(\partial_{S_i^\bb})
  &
  \Delta_i^{\!\bb} &=& \Delta_i^S K^\bb I,
\\\delta_{i,\bb_j} &=& \det(\partial_{S_{i-1}^{\bb_j}})\det(\partial_{T_i^{\bb_j}})
  &
  \Delta_i^{\!\bb_j} &=& \Delta_i^\st K^{\bb_j} I
  \qquad\text{ for } j = 1,\dots,\ell-1,
\\\delta_{i,\aa} &=& \det(\partial_{T_{i-1}^\aa})
  &
  \Delta_{i-1}^{\!\aa} &=& \Delta_{i-1}^T K^\aa I,
  \qquad\text{}
\\[.75ex]
  \delta_{i,\lambda} &=& \dis\prod_{j=0}^\ell \delta_{i,\bb_j}
  &
  \Delta_{i,\lambda} I &=& \dis\prod_{j=0}^\ell \Delta_i^{\!\bb_j}
\end{array}
$$
\end{defn}

\begin{remark}\label{capital Delta formula}
By Remark \ref{delta formula} and Definition \ref{d:DeltaLambda}, $\Delta_{i,\lambda}I$ is the number of hedgerows along $\lambda$.
\end{remark}

Each edge in a chain-link fence is assigned a weight, which is the coefficient of the succeeding face in the circuit, shrub, or hedge rim of the preceding face.

\begin{defn}\label{d:weight}
Let $c_\sigma(\sigma',T_{i-1})$, $c_\sigma(\tau,\st_i)$, and
$c_\rho(\rho',S_i)$ be the coefficients on~$\sigma'$, $\tau$,
and~$\rho'$ in the circuit, shrub, and hedge rim of an
$(i-1)$-face~$\sigma$, an $(i-1)$-stake~$\sigma$, and
an~$i$-face~$\rho$, respectively.
The edges in a chain-link fence are given the following \emph{weights}:
\begin{itemize}\itemsep=1ex
\item%
the boundary-link $\tau_0$\,---\,$\tau$ has weight $\delta_{i,\bb}^2
c_\tau(\tau_0,S{}_i^{\,\bb})$,
\item%
the chain-link
\raisebox{2ex}[0pt][0pt]{$\tau_j\!$}\,\,\raisebox{.5ex}{\tiny$\diagdown$}\,%
\raisebox{-1ex}[0pt][0pt]{$\sigma_j$}
has weight $\delta_{i,\bb_j}^2
c_{\sigma_j}(\tau_j,\st_i{}^{\!\!\bb_j})$,
\item%
the containment\,
\raisebox{-1.25ex}[0pt][0pt]{$\,\sigma_j$}\raisebox{.25ex}{\tiny$\diagup$}\,%
\raisebox{1.25ex}[0pt][0pt]{$\tau_{j-1}$} 
has weight $(-1)^{\sigma_j \subset \tau_{j-1}}$, and
\item%
the cycle-link $\sigma$\,---\,$\sigma_\ell$ has weight
$\delta_{i,\aa}^2\,c_{\sigma_\ell}(\sigma,T_{i-1}^\aa)$.
\end{itemize}
The \emph{weight} of the chain-link fence $\phi$ is the
product~$w_\phi$ of the weights on its edges.
\end{defn}

\begin{remark}\label{simpler weights}
By Remark \ref{torsion}, $\delta_{i,\bb}^2 = \delta_{i,\bb_j}^2 = \delta_{i,\mathbf{a}}^2 = 1$ in the three-variable case.
Therefore the edges in a chain-link fences in the three-variable case have the following simpler weights:
\begin{itemize}\itemsep=1ex
    \item the boundary-link $\tau_0$\,---\,$\tau$ has weight $c_{\tau}(\tau_0, S_i^{\bb})$,
    \item the chain-link \raisebox{2ex}[0pt][0pt]{$\tau_j\!$}\,\,\raisebox{.5ex}{\tiny$\diagdown$}\,%
\raisebox{-1ex}[0pt][0pt]{$\sigma_j$} has weight $c_{\sigma_j}(\tau_j, ST_i^{\bb_j})$,
    \item the containment \,
\raisebox{-1.25ex}[0pt][0pt]{$\,\sigma_j$}\raisebox{.25ex}{\tiny$\diagup$}\,%
\raisebox{1.25ex}[0pt][0pt]{$\tau_{j-1}$} has weight $(-1)^{\sigma_j \subset \tau_{j-1}}$, and
    \item the cycle-link $\sigma$\,---\,$\sigma_{\ell}$ has weight $c_{\sigma_{\ell}}(\sigma, T_{i-1}^{\mathbf{a}})$.
\end{itemize}
\end{remark}

\begin{thm}[\cite{emo20}, Theorem 3.7]\label{canonicalcombinatorialdescrip}
The canonical sylvan homomorphism $D^{\aa \bb}: \widetilde{C}_i K^{\bb}I \rightarrow \widetilde{C}_{i-1} K^{\aa} I$ is given by its sylvan matrix, with entries
\[ D_{\sigma \tau}^{\aa \bb} = \sum\limits_{\lambda \in \Lambda_{\aa \bb}} \frac{1}{\Delta_{i,\lambda}I} \sum\limits_{\phi \in \Phi_{\sigma \tau} (\lambda)} w_{\phi}. \]
\end{thm}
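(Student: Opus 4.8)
The plan is to unwind the construction of the canonical sylvan resolution from~\cite{emo20} until $D^{\aa\bb}$ is exhibited as a sum over lattice paths, and then to substitute the combinatorial formula for the Moore--Penrose pseudoinverse. Recall that $F_\bullet$ is built by the Wall-complex construction of~\cite{eag90}, applied to the $\mathbb{N}^n$-graded complex underlying Hochster's splitting $F_i=\bigoplus_{\bb}\widetilde H_{i-1}(K^\bb I;\kk)\otimes S(-\bb)$~\cite{hochster77}, with the Moore--Penrose pseudoinverse of the relevant simplicial boundary map chosen as the splitting at every stage. The formalism of that construction expresses the induced differential's $(\aa,\bb)$-block as a sum, with signs, over the saturated decreasing lattice paths $\lambda=(\bb=\bb_0,\dots,\bb_j=\aa)\in\Lambda(\aa,\bb)$, of composites of two kinds of elementary maps: one \emph{transfer} map for each unit step $\bb_{\ell-1}\to\bb_\ell=\bb_{\ell-1}-e_{k_\ell}$ of $\lambda$, and one application of the chosen splitting at each intermediate degree $\bb_1,\dots,\bb_{j-1}$, together with the passage to homology at the initial degree $\bb$ and from homology at the terminal degree $\aa$. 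The first step is to write this composite out explicitly on the canonical bases of the reduced chain groups, that is, on faces.

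Second, one identifies these elementary maps face by face. The transfer map for $\bb_{\ell-1}\to\bb_\ell=\bb_{\ell-1}-e_{k_\ell}$ is the component of the graded differential that lowers $\mathbb{N}^n$-degree by $e_{k_\ell}$; combinatorially it removes the vertex $k_\ell$, sending an $i$-face $\tau_{\ell-1}\in K_i^{\bb_{\ell-1}}I$ that contains $k_\ell$ to $(-1)^{\sigma_\ell\subset\tau_{\ell-1}}\sigma_\ell$ with $\sigma_\ell=\tau_{\ell-1}-e_{k_\ell}\in K_{i-1}^{\bb_\ell}I$, and contributing nothing to this path term when $k_\ell\notin\tau_{\ell-1}$; these are the ``containment'' edges. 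The splitting at an intermediate degree $\bb_\ell$ lifts the incoming $(i-1)$-chain $\sigma_\ell$ to its shrub $s(\sigma_\ell)=\sum_{\tau_\ell}c_{\sigma_\ell}(\tau_\ell,\st_i^{\bb_\ell})\,\tau_\ell$, the $i$-chain whose boundary agrees with $\sigma_\ell$ on the stakes; this produces the ``chain-link'' edges $\sigma_\ell\,\diagdown\,\tau_\ell$. At the two ends the corresponding operations are the hedge rim of $\tau$ in the stake set $S_i^\bb$ (the ``boundary-link'' $\tau$\,---\,$\tau_0$) and the circuit of the outgoing $(i-1)$-chain in the shrubbery $T_{i-1}^\aa$ (the ``cycle-link'' $\sigma_j$\,---\,$\sigma$). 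Comparing with the definition of a chain-link fence, a term of the composite for $\lambda$ is nonzero precisely when the threaded faces $\tau_0,\sigma_1,\tau_1,\dots,\tau_{j-1},\sigma_j$ constitute a chain-link fence relative to the chosen stake set, hedges, and shrubbery along $\lambda$, and the accumulated sign of that term is $\prod_{\ell=1}^{j}(-1)^{\sigma_\ell\subset\tau_{\ell-1}}$.

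Third, substitute the combinatorial formula for the Moore--Penrose pseudoinverse, namely the corollary of Berg's theorem~\cite[Theorem 1]{berg86} recorded as~\cite[Theorem 5.7]{emo20}, and in the torsion form of Remark~\ref{torsion} as~\cite[Theorem 3.3]{cck17}. It writes the pseudoinverse at a degree $\mathbf{c}$, hence also the induced splittings and projections there, as a matrix whose denominator is exactly the normalizing constant $\Delta$ of Definition~\ref{d:Delta} and whose numerator is an integer combination, indexed by the relevant shrubberies, stake sets, or hedges of $K^{\mathbf{c}}I$, of products of the sub-determinant squares $\delta^2$ with the circuit, shrub, and hedge-rim coefficients $c_\bullet(\cdot,\cdot)$ of Definition~\ref{d:weight}. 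Inserting this at every degree $\bb_0,\dots,\bb_j$ along $\lambda$ and expanding the product, each surviving monomial is indexed by a choice of stake set at $\bb$, a hedge at each of $\bb_1,\dots,\bb_{j-1}$, and a shrubbery at $\aa$ --- that is, a hedgerow along $\lambda$, of which there are $\Delta_{i,\lambda}I$ by Remark~\ref{capital Delta formula} --- together with a compatible system of threaded faces, namely a chain-link fence $\phi\in\Phi_{\sigma\tau}(\lambda)$. The coefficient of that monomial has in its denominator the product of the per-degree normalizing constants, which is $\Delta_{i,\lambda}I$ by Definition~\ref{d:DeltaLambda}, and in its numerator the product of the boundary-link, chain-link, containment, and cycle-link weights, which is $w_\phi$ by Definition~\ref{d:weight}. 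Summing over $\phi$ and then over $\lambda$ gives
\[
D^{\aa\bb}_{\sigma\tau}=\sum_{\lambda\in\Lambda_{\aa\bb}}\frac{1}{\Delta_{i,\lambda}I}\sum_{\phi\in\Phi_{\sigma\tau}(\lambda)}w_\phi,
\]
as claimed; specializing to three variables makes every $\delta^2$ equal to $1$ by Remark~\ref{torsion} and recovers the simpler weights of Remark~\ref{simpler weights}.

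The step I expect to be the main obstacle is the bookkeeping behind the first two paragraphs. One must verify that the Wall-complex construction yields a sum indexed precisely by $\Lambda(\aa,\bb)$ with the stated signs, and --- more delicately --- that the transfer maps between the \emph{varying} Koszul complexes $K^{\bb_\ell}I$ act on faces exactly by vertex deletion, keeping track of when a face of $K^{\bb_{\ell-1}}I$ passes to a face, to a boundary, or to $0$ in $K^{\bb_\ell}I$, since it is this trichotomy that forces the incidence $\sigma_\ell=\tau_{\ell-1}-e_{k_\ell}$ in a chain-link fence. One must also confirm that the pseudoinverse expands with denominator exactly $\Delta$, so that the per-degree denominators multiply to $\Delta_{i,\lambda}I$ without cancellation across degrees, and chase every Koszul sign through the transfer maps and the lifts so that the signs assemble into the edge weights of Definition~\ref{d:weight}. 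Once these structural points and sign conventions are pinned down, matching ``monomial in the expansion'' with ``chain-link fence'' and reading off $1/\Delta_{i,\lambda}I$ and $w_\phi$ is a routine, if lengthy, computation.
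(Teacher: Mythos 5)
The paper you are reading does not prove this statement: Theorem~\ref{canonicalcombinatorialdescrip} is quoted verbatim from \cite{emo20} (Theorem 3.7 there), so there is no internal proof to compare against. Your outline follows the same route that \cite{emo20} actually takes: build the resolution by Eagon's Wall-complex construction \cite{eag90} with the Moore--Penrose pseudoinverse as the splitting, observe that the resulting $(\aa,\bb)$-block of the differential is a sum over saturated decreasing lattice paths of composites of degree-lowering transfer maps (vertex deletion, giving the containment signs) with the chosen splittings, and then render the splittings combinatorial via Berg's result \cite[Theorem 1]{berg86} as packaged in \cite[Theorem 5.7]{emo20}, so that each surviving term is indexed by a hedgerow together with a chain-link fence, with denominator $\Delta_{i,\lambda}I$ and numerator $w_\phi$. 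That said, what you have written is a plan rather than a proof: the items you explicitly defer in your final paragraph --- that the Wall-complex differential is indexed exactly by $\Lambda(\aa,\bb)$ with the stated signs, that the splitting and projection at each degree expand with denominator exactly the $\Delta$ of Definition~\ref{d:Delta} and numerators given by the circuit, shrub, and hedge-rim coefficients, and the Koszul sign bookkeeping across varying Koszul complexes $K^{\bb_\ell}I$ --- are precisely the substance of the proof in \cite{emo20}, not routine afterthoughts; in particular the identification of the pseudoinverse with the shrub/hedge-rim expansion is itself a theorem there. So your proposal correctly reconstructs the architecture of the source's argument, but it does not amount to an independent verification of the formula.
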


In the three-variable case, the computation of chain-link fences can be bypassed, giving an expression of the entries as sums over lattice paths of weights that depend only on the number of certain Koszul complexes at degrees indexed by each path.
These combinatorial formulas are given in Theorems~\ref{compendium0to1} and \ref{threevariabletheorem}.

\section{Sylvan matrix entries for the map $F_0 \leftarrow F_1$}\label{coefficients1to0}
\end{singlespace}

Throughout this paper, let $\kk$ be a field with $\text{char}(\kk) \neq 2,3$.
Let $x_i, x_j,$ and $x_k$ be indeterminants. 
In the three-variable case, $K^{\mathbf{b}}I$ is always a subcomplex of the simplex with facet $ijk$.
If $\beta_{1,\mathbf{b}} = \text{dim}_{\Bbbk}(\widetilde{H}_{0}(K^{\mathbf{b}}(I); \Bbbk) \neq 0$, then $K^{\mathbf{b}}(I)$ is isomorphic to either two vertices \casethree, a vertex and an edge \casetwo, or three vertices \casefour.
The main result of this section is Theorem~\ref{compendium0to1}, which describes all values $D_{\nothing v}^{\aa \bb}$.
See Theorem~\ref{threevariabletheorem} for the entries $D_{ve}^{\aa \bb}$.

\begin{thm}\label{compendium0to1}
Let $\bb$ and $\aa$ be degree vectors such that $\aa \prec \bb$, $\text{dim}_{\kk}(\widetilde{H}_0 K^{\bb}I;\kk) \neq 0$, and $\aa$ is the degree vector of a monomial generator of $I$.
Then the sylvan matrix entries $D_{\nothing v}^{\aa \bb}$ are given below.
\begin{enumerate}
    \item If $i$ is an isolated vertex in $K^{\bb}I$ and $\aa$ is the unique generator that lies behind $\bb$ in the $i$-direction, then $D_{\nothing i}^{\aa \bb} = 1$.
    \item If $\aa$ does not lie behind $\bb$ in the direction of the connected component of $v$, then $D_{\nothing v}^{\aa \bb} = 0$.
    \item Suppose $K^{\bb}I$ is \casetwo and $\aa$ lies behind $\bb$ in the direction of the edge. Given a lattice path $\lambda \in \Lambda(\aa,\bb)$, let $\bb_{\lambda}$ be the degree vector closest to $\bb$ along $\lambda$ such that $K^{\bb_{\lambda}}I$ contains one or fewer vertices. Let $n$ be the length of $\lambda$. Then
    \[D_{\nothing i}^{\aa \bb} = \frac{1}{2^n} \sum\limits_{\lambda \in \Lambda (\aa,\bb)} 2^{|\bb_{\lambda} - \aa|}. \]
\end{enumerate}
\end{thm}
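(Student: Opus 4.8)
The plan is to substitute each of the three situations into the combinatorial formula of Theorem~\ref{canonicalcombinatorialdescrip} with $i=0$ and to exploit that in this range essentially everything is forced. The first step is to observe that a chain-link fence from a vertex $v$ to $\nothing$ has all its faces $\sigma_\ell$ equal to $\nothing$ and all its $\tau_\ell$ equal to $0$-faces, so by Remark~\ref{simpler weights} every edge weight is $1$: the boundary-link weight is the coefficient of a root in a hedge rim, each chain-link weight is the coefficient in the shrub $s(\nothing)$ of a dimension-$0$ hedge (whose shrubbery is a single vertex), each containment weight $(-1)^{\sigma_\ell\subset\tau_{\ell-1}}$ equals $1$ because $\sigma_\ell=\nothing$ and $\tau_{\ell-1}$ is a vertex, and the cycle-link weight is $1$ because the unique dimension-$(-1)$ shrubbery is empty, so the circuit of $\nothing$ is $\nothing$ itself. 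Hence $w_\phi=1$ always, and $D_{\nothing v}^{\aa\bb}=\sum_{\lambda}\#\Phi_{\nothing v}(\lambda)/\Delta_{0,\lambda}I$.

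Next I would evaluate the two counts along a path $\bb=\bb_0,\dots,\bb_N=\aa$ with step directions $k_1,\dots,k_N$. The containments $\sigma_\ell=\tau_{\ell-1}-e_{k_\ell}$ force $\tau_m=\{k_{m+1}\}$ for all $m$, so the face sequence of a fence is determined by $\lambda$; the chain-links then force the single shrubbery vertex of each intermediate hedge $ST_0^{\bb_\ell}$ to be $\{k_{\ell+1}\}$, pinning that hedge, and $T_{-1}^{\aa}$ is empty. Thus the only freedom is the dimension-$0$ stake set $S_0^{\bb}$, and the boundary-link requires its root of the component $C_v$ of $v$ in $K^{\bb}I$ to be $\{k_1\}$; so $\#\Phi_{\nothing v}(\lambda)=\prod_{C\neq C_v}|C|$ over the remaining components of $K^{\bb}I$ when $\{k_1\}\in C_v$, and $0$ otherwise, where the auxiliary conditions $\xx^{\bb_\ell}\in I$ and $\{k_{\ell+1}\}\in K^{\bb_\ell}I$ hold automatically since $\xx^{\aa}$ divides every $\xx^{\bb_\ell}$ and $\xx^{\aa}\in I$. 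By Definition~\ref{d:DeltaLambda} and Remarks~\ref{torsion} and \ref{delta formula}, $\Delta_{0,\lambda}I=\big(\prod_{C}|C|\big)_{K^{\bb}I}\cdot\prod_{\ell=1}^{N-1}\#\mathrm{vert}(K^{\bb_\ell}I)$, because $\Delta_0^{S}K^{\bb}I$ counts dimension-$0$ stake sets, each $\Delta_0^{ST}K^{\bb_\ell}I$ counts dimension-$0$ hedges (a shrubbery being any single vertex), and $\Delta_{-1}^{T}K^{\aa}I=1$. Therefore each admissible $\lambda$ contributes $1/\big(|C_v|\prod_{\ell=1}^{N-1}\#\mathrm{vert}(K^{\bb_\ell}I)\big)$.

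For (1) and (2) I would examine $C_v$. In (1), $C_v=\{i\}$, and since $i$ is isolated, $\xx^{\bb-e_i-e_j}\notin I$ and $\xx^{\bb-e_i-e_k}\notin I$; this forces the unique generator behind $\bb$ in the $i$-direction to have $\aa_j=\bb_j$, $\aa_k=\bb_k$, so there is a single path (all $i$-steps) and each intermediate $K^{\bb_\ell}I$ is just the vertex $\{i\}$, giving contribution $1/(1\cdot1)=1$. In (2): if $C_v=\{m\}$, admissibility forces $k_1=m$, and $\xx^{\bb-e_m-e_{m'}}\notin I$ for $m'\neq m$ inductively forces every step to be an $m$-step, so $\aa$ differs from $\bb$ only in the $m$-coordinate -- precisely the condition that $\aa$ lie behind $\bb$ in the direction of $C_v$, contradicting the hypothesis; if $C_v$ is an edge $\{pq\}$ with third variable $r$, then $\{k_1\}\in C_v$ gives $k_1\in\{p,q\}$, and $\xx^{\bb-e_p-e_r},\xx^{\bb-e_q-e_r}\notin I$ rule out any $r$-step, so $\aa_r=\bb_r$, again contradicting the hypothesis. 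Hence no $\lambda$ is admissible and $D_{\nothing v}^{\aa\bb}=0$.

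Finally, in (3) write $K^{\bb}I$ as the edge $ij$ together with the isolated vertex $k$; then every path is admissible (since $\aa_k=\bb_k$ forces $k_1\in\{i,j\}$), $|C_v|=2$ for the edge component, and $\#\Phi_{\nothing i}(\lambda)=1$ (the only other component has one vertex). The heart of the argument is a structural claim about $K^{\mathbf c}I$ for $\mathbf c$ strictly below $\bb$ on a path to $\aa$: since $\mathbf c_k=\bb_k$ while $\xx^{\bb-e_i-e_k},\xx^{\bb-e_j-e_k}\notin I$, the vertex $\{k\}$ is absent, so $K^{\mathbf c}I$ has at most two vertices; and it has at least one, for otherwise $\xx^{\mathbf c}$ would be a minimal generator strictly between $\bb$ and $\aa$. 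Since vertices can only disappear as one moves toward $\aa$, if $\bb_{\ell^*}=\bb_\lambda$ is the first degree on $\lambda$ with at most one vertex, then $\#\mathrm{vert}(K^{\bb_\ell}I)=2$ for $1\le\ell<\ell^*$ and $=1$ for $\ell^*\le\ell\le N-1$; hence $\Delta_{0,\lambda}I=2\cdot 2^{\ell^*-1}=2^{\ell^*}$ and the contribution of $\lambda$ is $2^{-\ell^*}=2^{N-\ell^*}/2^{N}=2^{|\bb_\lambda-\aa|}/2^{n}$. Summing over $\lambda\in\Lambda(\aa,\bb)$ gives the formula. The main obstacle is exactly this structural analysis -- pinning down the vertex sets of the $K^{\mathbf c}I$ along every path, their monotone decay toward $\aa$, and the absence of a minimal generator strictly between $\bb$ and $\aa$ -- together with reading off from the definition that the face sequence of a chain-link fence is entirely determined by $\lambda$ in dimension $0$.
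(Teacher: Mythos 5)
Your proposal is correct and takes essentially the same route as the paper's proof (Lemmas~\ref{sylvan0to1}, \ref{zeromap}, and \ref{gridcount}): evaluate the formula of Theorem~\ref{canonicalcombinatorialdescrip} directly in homological dimension $0$, where every edge weight is $1$, so each entry becomes a ratio of chain-link fence counts to hedgerow counts along each lattice path, with the staircase forcing the face sequence and the intermediate Koszul complexes. Your uniform per-path contribution $1/\bigl(|C_v|\prod_\ell \#\mathrm{vert}(K^{\bb_\ell}I)\bigr)$ merely packages the paper's three separate computations, and your handling of parts (1) and (2) (allowing an edge in the other component of $K^{\bb}I$, and the divisibility argument showing no admissible first step exists) is if anything more careful than the paper's terse lemmas.
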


\begin{proof}
These claims are Lemmas \ref{sylvan0to1}, \ref{zeromap}, and \ref{gridcount}.
\end{proof}

\begin{lemma}\label{sylvan0to1}
Let $i$ be an isolated vertex in $K^{\bb}I$, and let $\mathbf{a}$ be the unique generator that lies behind $\bb$ in the $e_i$-direction.
Then $D_{\nothing i}^{\aa \bb} = 1 $. 
\end{lemma}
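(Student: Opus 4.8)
The plan is to apply the combinatorial formula from Theorem~\ref{canonicalcombinatorialdescrip} directly, exploiting the fact that an isolated vertex $i$ in $K^{\bb}I$ is a maximally simple piece of data. First I would analyze the lattice paths in $\Lambda(\aa,\bb)$: since $\aa$ is the unique generator lying behind $\bb$ in the $e_i$-direction and $i$ is isolated in $K^{\bb}I$, I expect that $\aa = \bb - m e_i$ for some $m \ge 1$, so every $\lambda \in \Lambda(\aa,\bb)$ is the \emph{single} straight path that decrements only the $i$-coordinate. This collapses the outer sum in Theorem~\ref{canonicalcombinatorialdescrip} to one term. I would justify the uniqueness of this path by noting that at each intermediate degree $\bb_\ell = \bb - (\text{something})$, the vertex $i$ must persist as a face (or the relevant homology class must be tracked) along the path, which forces the path to stay in the $e_i$-direction; any deviation would leave the connected component of $i$ and contribute nothing.

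Next I would pin down the hedgerow and chain-link fence data along this unique path $\lambda$. Because $\sigma = \nothing$ (the empty face, as indicated by the notation $D_{\nothing i}$), we are in the $i=0$ case of the sylvan matrix, so $\tau = i$ is a $0$-face and $\sigma = \nothing$ is the unique $(-1)$-face. The chain-link fence from $\tau = i$ to $\sigma = \nothing$ reduces to a short sequence: I would track the boundary-link $\tau_0 \,\text{---}\, i$ in the stake set $S_0^{\bb}$, the successive chain-links and containments along $\lambda$, and the final cycle-link $\sigma_j \,\text{---}\, \nothing$ in the shrubbery $T_{-1}^{\aa}$. Since $i$ is isolated, its hedge rim $r(i)$ in $K^{\bb}I$ is just $i$ itself (it lies in its own singleton component and equals a root), so $c_i(i, S_0^{\bb}) = 1$, giving each boundary-link weight $1$. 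Similarly the shrub and circuit coefficients along the intermediate degrees and at $\aa$ are all $\pm 1$ because, per Remark~\ref{simpler weights}, the $\delta^2$ factors are $1$ in three variables, and the only face involved is the persisting vertex. I would verify that $\Delta_{i,\lambda}I = 1$ here as well, via Remark~\ref{capital Delta formula}: at each degree the relevant Koszul complex restricted to the $i$-component has a unique hedge/stake set (a single vertex has exactly one hedgerow datum), so the normalizing denominator is $1$.

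Putting these together, the formula gives $D_{\nothing i}^{\aa\bb} = \frac{1}{1}\sum_{\phi} w_\phi$, and I would argue there is exactly one chain-link fence $\phi$ along $\lambda$ (the vertex $i$ propagates uniquely, with no branching since no edges are available in its component at any intermediate degree), and its weight is a product of $\pm 1$'s. The remaining task is a sign check: one must confirm the signs on the containments $(-1)^{\sigma_j \subset \tau_{j-1}}$ and on the circuit/shrub coefficients multiply to $+1$. I expect this to follow because all faces in sight are vertices $\{i\}$ and the empty face, and the boundary map $\partial\{i\} = \nothing$ carries coefficient $+1$ with no sign ambiguity.

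The main obstacle I anticipate is the sign bookkeeping combined with carefully justifying that the path and the fence are genuinely unique — in particular ruling out that lattice paths which momentarily wander in the $j$- or $k$-directions could contribute. The cleanest way to dispatch this is to observe that for such a wandering path there is some degree $\bb_\ell$ at which the vertex $i$ is no longer isolated from, or no longer equal to, the tracked face, so the chain-linked/cycle-linked conditions in the definition of a chain-link fence fail, making $\Phi_{\nothing i}(\lambda) = \nothing$; hence those paths drop out of the sum entirely. Once that is established, the computation is just the product of units, equal to $1$.
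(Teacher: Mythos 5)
Your route is the same as the paper's: collapse $\Lambda(\aa,\bb)$ to the single straight path (since $\bb-\aa=m e_i$, every saturated decreasing step must be $-e_i$), propagate the vertex $i$ through a forced chain-link fence whose boundary-link, chain-link, containment, and cycle-link weights are all $+1$, and cancel against $\Delta_{0,\lambda}I$. The conclusion and the skeleton are right, but the one step where your justification does not hold up as written is the claim that $\Delta_{0,\lambda}I=1$ because the Koszul complexes ``restricted to the $i$-component'' have a unique hedge. By Definition~\ref{d:DeltaLambda} the factors $\Delta_0^{\bb}$, $\Delta_0^{\bb_\ell}$, $\Delta_{-1}^{\aa}$ count stake sets, hedges, and shrubberies of the \emph{entire} Koszul complex in each degree, not of the component of $i$. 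What actually forces the interior factors to equal $1$ is the stronger fact (asserted in the paper, and needing the hypothesis) that for every $\bb'$ strictly between $\bb$ and $\aa$ the complex $K^{\bb'}I$ is literally the single vertex $i$: if another vertex $j$ lay in $K^{\bb'}I$, then $\xx^{\bb'-e_j}\in I$ would give $\xx^{\bb-e_i-e_j}\in I$, i.e.\ $ij\in K^{\bb}I$, contradicting isolation of $i$. This is not cosmetic: if an interior degree could carry an extra isolated vertex $j$, the hedge with $T_0=\{j\}$ would yield no fence while doubling $\Delta$, and the entry would drop below $1$; so the restriction-to-component reading proves too much, and you need the upward-closure argument to rule that situation out.

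There is a matching subtlety at the initial degree $\bb$ (which the paper's displayed hedgerow with $S_0^{\bb}=\{\,\}$ also glosses): the hypotheses allow $K^{\bb}I$ to be \casetwo, in which case $S_0^{\bb}$ must be $\{j\}$ or $\{k\}$, so $\Delta_0^{\bb}=2$ and there are two fences rather than your claimed single one. The fix is that $r(i)=i$ with $c_i(i,S_0^{\bb})=1$ for either choice (no $0$-boundary involves $i$, and $\kk\{K_0\setminus S_0^{\bb}\}\cap\widetilde B_0=0$ gives uniqueness of the hedge rim), so each hedgerow contributes exactly one weight-$1$ fence and the count cancels against $\Delta_0^{\bb}$. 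With those two points recorded, your deferred sign check is immediate: $c_i(i,S_0^{\bb})=c_{\nothing}(i,ST_0^{\bb_\ell})=c_{\nothing}(\nothing,T_{-1}^{\aa})=1$ and each containment sign is $(-1)^{\nothing\subset i}=1$, exactly as in the paper's proof.
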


\begin{proof}
Note that there is only one saturated, decreasing lattice path in the staircase from $\mathbf{b}$ to $\mathbf{c}$, namely, the lattice path that moves back in the $i$-direction at each step.
For every degree vector $\mathbf{b'}$ between $\mathbf{b}$ and $\mathbf{c}$, $K^{\mathbf{b'}}I$ is isomorphic to the single vertex $i$. 
There is only one hedgerow along this lattice path:
$$%
\begin{array}{@{}r@{\qquad\quad}c@{\,}c@{\,}c@{\,}c@{\,}c@{\,}c@{\,}c@{\,}c@{}}
   &    &\qquad\qquad&        &\qquad\qquad&     &\qquad\qquad\\[-2.5ex]
\\[-2.5ex]\lambda:&
    \mathbf{c}    &\fillbar&     \bb_{j-1}    &\cdots&   \bb_1   &\fillbar&      \bb
\\ST_0^{\,\lambda}:&
T_{-1} = \{\}&\qquad\qquad&T_0 = \{i\}&\qquad\qquad&T_0 = \{i\}&\qquad\qquad& S_0 = \{ \}
\\ &         &\qquad\qquad&S_{-1}=\{\nothing\}&\qquad\qquad&S_{-1}=\{\nothing\}
\end{array}
$$
Since the vertex $i$ does not appear with nonzero coefficient in the boundary of any edges of $K^{\bb}I$, the hedge rim $r(i)$ in $S_0 = \{\, \}$ is $i$, since $i - r(i) = i - i = 0 \in \widetilde{B}_0 (K^{\bb}I; \kk)$.
The lattice path moves back in the $e_i$-direction only.
Thus in order for the chain-link fence not to terminate, each vertex in the top line of the chain-link fence must be $i$, since $\sigma_{\ell} = \tau_{\ell-1} \setminus i$.
Note that $\nothing$ is indeed chain-linked to $i$, since the shrub $s(\nothing) = i$.
Finally, $\sigma_{\ell} = \nothing$, and since $\nothing$ is a cycle, $\zeta_{T_{-1}}(\nothing) = \nothing$, so $\nothing$ is cycle-linked to itself.
The weight of each linkage is $1$, as $c_i(i,S_0^{\bb}) = c_{\nothing}(i,ST_0^{\bb_{\ell}}) = c_{\nothing}(\nothing, T_{-1}^{\mathbf{a}}) = 1$.
Thus, along this hedgerow, there is also only one chain-link fence, beginning with $i$ and ending with $\sigma$:
$$%
\qquad\qquad
\begin{array}{*{11}{@{}c@{}}}
\\[-2.2ex]
   &         &i&           &        &         &i&           &        &         &
   i \edgehoriz 1 i
\\ &\edgeup 1& &\cdots&        && &\edgedown 1&        &\edgeup 1&
\\ \nothing\edgehoriz 1\nothing
   &         & &           &&         & &           &\nothing&         &
\\[-.2ex]
\end{array}
$$
By Remark \ref{delta formula}, $\Delta_{i,\lambda}I = 1$, and $w_{\phi} = 1$ is the product of the weights of the linkages.
Then $D_{\nothing i}^{\mathbf{c} \bb} = \sum\limits_{\lambda \in \Lambda(\mathbf{c},\bb)} \frac{1}{\Delta_{i,\lambda}I} \sum\limits_{\phi \in \Phi(\nothing, i)} w_{\phi} = 1$. \qedhere 

\end{proof}

\begin{lemma}\label{zeromap}
If $\aa$ does not lie behind $\bb$ in the direction of the connected component of a vertex $v$, then $D_{\nothing v}^{\aa \bb}=0$.
\end{lemma}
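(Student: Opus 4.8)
The plan is to show that every term in the sum of Theorem~\ref{canonicalcombinatorialdescrip} vanishes, by arguing that no chain-link fence can be completed once $\aa$ fails to lie behind $\bb$ in the direction of the connected component $C$ of $v$. Write $v$ for the target $0$-face $\sigma = v$ and fix a lattice path $\lambda = (\aa = \bb_j, \dots, \bb_0 = \bb) \in \Lambda(\aa,\bb)$ together with a hedgerow $ST_0^\lambda$. I would track the connected component in which the running vertices $\tau_\ell, \sigma_\ell$ of a putative chain-link fence must live as $\ell$ decreases from $j-1$ to $0$. The first step is to record the combinatorial constraint imposed by the condition ``$\aa$ does not lie behind $\bb$ in the direction of $C$'': since $\aa \prec \bb$, moving from $\bb$ down to $\aa$ requires stepping back in at least one coordinate direction $e_k$ that is \emph{not} a direction of $C$ (equivalently, $x_k$ does not divide the generator associated with the opposite vertex of $C$, so that $K^{\bb - e_k}I$ does not contain the component $C$ in the same position — the relevant vertex of $C$ becomes a cone point / disappears). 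I would make this precise using the description of $K^{\bb}I$ as a subcomplex of the simplex $ijk$ together with the staircase picture: in three variables each connected component of $K^{\bb}I$ in degrees where $\widetilde H_0 \neq 0$ is either an isolated vertex, a single edge, or (in the disconnected three-vertex case) a union thereof, and the ``direction of the component'' is exactly the set of $e_k$ such that the component survives in $K^{\bb - e_k}I$.

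Next I would run the chain-link fence conditions from the $\sigma$ end. The last linkage requires $\sigma_j$ to be cycle-linked to $v$ in the shrubbery $T_{-1}^{\aa}$; since all faces here are $0$-faces and $T_{-1} = \{\varnothing\}$, the only cycle is $\varnothing$ itself, so $\sigma_j = \varnothing = v$ would be forced only in a degenerate sense — more usefully, the condition $\sigma_j$ cycle-linked to $\sigma=v$ together with $\sigma_j \in K_{-1}^{\aa}I$ forces, via the chain $\sigma_j = \tau_{j-1} - e_{k_j}$ relation going one step up, that $\tau_{j-1}$ and hence $\sigma_{j-1}$ lie in the component $C$, because the shrub $s(\sigma_\ell)$ of a stake $\sigma_\ell$ is supported on edges of a single connected component (shrubs live inside one tree of the spanning forest $T_0^{\bb_\ell}$), and being chain-linked propagates ``same component'' from $\sigma_\ell$ to $\tau_\ell$. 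The key propagation step is then: the relation $\sigma_\ell = \tau_{\ell - 1} - e_{k_\ell}$ together with $\sigma_\ell \in K_{-1}^{\bb_\ell} I$ and $\tau_{\ell-1} \in K_0^{\bb_{\ell-1}}I$ means the vertex $\tau_{\ell-1}$, viewed in degree $\bb_{\ell-1}$, reduces to the face $\sigma_\ell$ in the lower degree $\bb_\ell$; since $C$ is the component being traced, this chain of reductions forces the generator $\aa$ to be reachable from $\bb$ staying within the directions that preserve $C$ — contradicting the hypothesis. In other words, a completed chain-link fence from $\tau$ to $v$ along $\lambda$ would exhibit $\aa$ as lying behind $\bb$ in the direction of $C$, so $\Phi_{\varnothing v}(\lambda) = \varnothing$ for every $\lambda$, and the formula gives $D_{\varnothing v}^{\aa\bb} = 0$.

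I would structure the writeup as: (i) reduce to showing $\Phi_{\varnothing v}(\lambda)=\varnothing$ for each $\lambda \in \Lambda(\aa,\bb)$; (ii) a lemma that shrubs and hence chain-links stay within one connected component, and that the boundary-link $\tau_0$\,---\,$\tau$ and the step relations $\sigma_\ell = \tau_{\ell-1}-e_{k_\ell}$ preserve ``belonging to component $C$'' in the appropriate degree; (iii) conclude that the existence of a fence would force a $\lambda$ all of whose backward steps are in directions of $C$, so that $\aa$ lies behind $\bb$ in the direction of $C$, against hypothesis. The main obstacle I anticipate is making step (ii) airtight across the degree changes: ``the connected component of $v$'' is a notion attached to $K^{\bb}I$, but the fence's vertices live in the varying complexes $K^{\bb_\ell}I$, so I need a clean invariant — I expect the right one is ``the vertex of the simplex $ijk$ underlying $\tau_\ell$ is an endpoint of an edge of $C$ in $K^{\bb}I$'', and I must check this is forced at each link using that in three variables the only way a $0$-cycle's component can change under a single coordinate step $e_k$ is if $e_k$ is a direction of that component. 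Once that invariant is set up, the rest is bookkeeping along the zig-zag.
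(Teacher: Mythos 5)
Your reduction to showing $\Phi_{\nothing v}(\lambda)=\varnothing$ for every $\lambda\in\Lambda(\aa,\bb)$ is the right start, and your first observation (a vertex is boundary-linked only to vertices of its own connected component) is exactly the opening of the paper's proof. But note that you have the two ends of the fence reversed: in $D_{\nothing v}^{\aa\bb}$ the vertex $v$ is the initial post $\tau$, a $0$-face of $K^{\bb}I$, while the target is $\sigma=\nothing\in K^{\aa}I$ and the final cycle-link happens in $T_{-1}^{\aa}$; your sentence ``$\sigma_j=\varnothing=v$'' reflects this confusion. More seriously, your key step (ii) --- that chain-links propagate membership in the component $C$ along the whole fence, so that a completed fence forces \emph{every} step of $\lambda$ to be in a direction of $C$ --- fails in this dimension. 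The intermediate stakes $\sigma_\ell$ here are the empty face, and the shrub $s(\nothing)$ in a hedge $ST_0^{\bb_\ell}$ is simply the single vertex constituting the shrubbery $T_0^{\bb_\ell}$, which may be \emph{any} vertex of $K^{\bb_\ell}I$; the empty face carries no component information, so nothing constrains $\tau_\ell$ to stay in any incarnation of $C$ after the first step. (Your parenthetical ``shrubs live inside one tree of the spanning forest'' is the dimension-$1$ picture relevant to the entries $D_{ve}$, not the dimension-$0$ one needed here.) Consequently your intended contradiction --- ``all backward steps are in directions of $C$, hence $\aa$ lies behind $\bb$ in the direction of $C$'' --- does not follow from the existence of a fence.

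The missing ingredient is the dichotomy that makes only the \emph{first} step matter. In the setting of Theorem~\ref{compendium0to1} the degree $\aa$ is a generator, so by Lemma~\ref{Fthm} $\operatorname{supp}(\bb-\aa)$ is a face of $K^{\bb}I$ and hence lies inside a single connected component; the hypothesis that $\aa$ does not lie behind $\bb$ in the direction of $C$ therefore means $\operatorname{supp}(\bb-\aa)$ is disjoint from the vertex set of $C$, so no decreasing lattice path from $\bb$ to $\aa$ ever moves back in a direction of $C$. With that, the argument finishes in one line, which is essentially the paper's proof: the boundary-link puts $\tau_0$ in $C$, and the containment $\sigma_1=\tau_0-e_{k_1}$ would force the first step of $\lambda$ to be the direction of $\tau_0\in C$, which is impossible, so every fence terminates. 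Your reading of the hypothesis (``at least one coordinate of $\bb-\aa$ is a non-$C$ direction'') is weaker than this and, without the face/dichotomy fact, would not rule out fences whose first step is a $C$-direction; so as written the proposal has a genuine gap, though it can be repaired by replacing the propagation claim with the observation above.
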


\begin{proof}
A vertex is only be boundary-linked to another vertex that lies in the same connected component.
If $\aa$ does not lie behind $\bb$ in the direction of the connected component of $v$, then no lattice path $\lambda$ from $\bb$ to $\aa$ will move back in the direction of any vertex in the connected component of $v$.
Therefore the chain-link fence will terminate.
\end{proof}

\begin{lemma}\label{gridcount}
Suppose that $K^{\mathbf{b}}(I)$ is \casetwo and $\aa$ lies behind $\bb$ in the direction of the edge $ij$.
Given a lattice path $\lambda \in \Lambda(\mathbf{a}, \bb)$, let $\bb_{\lambda}$ be the degree vector closest to $\bb$ along $\lambda$ such that the Koszul simplicial complex contains one or fewer vertices.
Let $n$ be the length of $\lambda$.
Then 
\[D_{\nothing i}^{\mathbf{a} \bb} = \frac{1}{2^n} \sum\limits_{\lambda \in \Lambda(\mathbf{a}, \bb)} 2^{|\mathbf{b}_{\lambda} - \mathbf{a}|}. \]
\end{lemma}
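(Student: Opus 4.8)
The plan is to apply the general formula of Theorem~\ref{canonicalcombinatorialdescrip} together with the simplifications of Remarks~\ref{delta formula} and~\ref{simpler weights}, and to show that along each fixed lattice path $\lambda\in\Lambda(\aa,\bb)$ the inner double sum $\frac{1}{\Delta_{0,\lambda}I}\sum_{\phi}w_\phi$ collapses to the single term $2^{|\bb_\lambda-\aa|}/2^n$. First I would classify, degree by degree along $\lambda$, the isomorphism type of $K^{\mathbf c}I$ for $\mathbf c=\bb_0,\dots,\bb_j$. Since $K^{\bb}I$ is a vertex together with a disjoint edge (\casetwo) and $\aa$ is a generator lying behind $\bb$ in the direction of the edge $ij$, moving back along $\lambda$ the edge $ij$ persists until we reach $\bb_\lambda$, where one of its endpoints (or the edge itself) is lost, after which the complex has at most one vertex in the relevant component; meanwhile $\aa$ being a generator forces $K^{\aa}I$ to be a point for the $\widetilde H_{-1}$ bookkeeping. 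I would record that for the degrees strictly between $\bb$ and $\bb_\lambda$ the Koszul complex contains (in the component of $i$) exactly the edge $ij$, so $\Delta_0^{\mathbf c}=\Delta_0^{\st}K^{\mathbf c}I$ counts the hedges of a single edge, which is $2$ (two choices of root/stake), while at the $n-|\bb_\lambda-\aa|$ remaining degrees the factor $\Delta_0^{\mathbf c}$ equals $1$. Hence $\Delta_{0,\lambda}I=2^{\,|\bb_\lambda-\aa|}$ — wait, more carefully: the number of degrees from $\bb$ down to $\bb_\lambda$ exclusive is $n-|\bb_\lambda-\aa|$, so I expect $\Delta_{0,\lambda}I = 2^{\,n-|\bb_\lambda-\aa|}$, and the prefactor $\frac{1}{\Delta_{0,\lambda}I}$ is $2^{\,|\bb_\lambda-\aa|-n}$; I will pin down the exact exponent by the same index-counting argument used in Lemma~\ref{sylvan0to1}.

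Next I would enumerate the chain-link fences $\phi$ along a fixed $\lambda$ and show $\sum_\phi w_\phi = 2^{\,|\bb_\lambda-\aa|}$ after the prefactor is applied — equivalently, that the raw weighted count of fences equals $2^{|\bb_\lambda - \aa|}\cdot 2^{\,n-|\bb_\lambda-\aa|}\cdot(\text{something})$; the bookkeeping I will actually do is to show that summing $w_\phi$ over all fences and over all hedgerows, then dividing by $\Delta_{0,\lambda}I$, yields $2^{|\bb_\lambda-\aa|}/2^n$ once we also incorporate the outer normalization implicit in comparing to a length-$n$ path. Concretely: a fence starts with the boundary-link $\tau_0$---$i$ where $\tau_0$ is the hedge rim of $i$ in the stake set $S_0^\bb$; since $i$ is the isolated vertex of $K^\bb I$, $r(i)=i$ and this weight is $1$. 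Proceeding down $\lambda$, at each degree $\mathbf c$ strictly above $\bb_\lambda$ the top face must be the vertex $i$, the chain-link weight $c_{\sigma}(i,\st_0^{\mathbf c})$ is $\pm1$, and the containment weight is $\pm1$; I must check these signs multiply to $+1$ (they do, by the same cancellation as in Lemma~\ref{sylvan0to1}, because the shrub of a stake of a single edge is that edge with coefficient $\pm1$). The combinatorial multiplicity enters precisely in the segment from $\bb_\lambda$ down to $\aa$: here the component of $i$ has collapsed to a point, each Koszul complex $K^{\mathbf c}I$ for $\mathbf c$ between $\bb_\lambda$ and $\aa$ contributes independently, and the number of viable hedges (choices of which vertex is the stake) is $2$ at each of the $|\bb_\lambda-\aa|$ steps, giving $2^{|\bb_\lambda-\aa|}$ fences each of weight $1$. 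Combined with the hedgerow count $\Delta_{0,\lambda}I$ computed above and the overall $1/2^n$ this produces the summand $2^{|\bb_\lambda-\aa|}/2^n$, and summing over $\lambda\in\Lambda(\aa,\bb)$ gives the claimed formula.

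The main obstacle I anticipate is the precise sign and coefficient accounting in the middle segment — verifying that every chain-link fence that does not terminate has weight exactly $+1$ (not merely $\pm1$), and that no spurious fences with canceling or non-unit weights survive — since the general weights in Definition~\ref{d:weight} involve the coefficients $c_\sigma(\tau,\st_i)$ in shrubs of hedges, and for a single edge one must track how the choice of stake vertex orients the shrub. A secondary subtlety is making sure $\bb_\lambda$ is well-defined and that "contains one or fewer vertices" correctly captures the transition degree; I would handle this by noting that once the edge $ij$ disappears from $K^{\mathbf c}I$, either the component becomes empty or a single surviving vertex, and in the latter case the analysis reverts to exactly the situation of Lemma~\ref{sylvan0to1} with multiplicity, so the two lemmas dovetail. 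The rest — the geometric series over lattice paths — is routine once the per-path summand is established.
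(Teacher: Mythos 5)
Your overall skeleton is the same as the paper's (apply Theorem~\ref{canonicalcombinatorialdescrip} with the simplifications of Remarks~\ref{delta formula} and \ref{simpler weights}, compute $\Delta_{0,\lambda}I$, and show each $\lambda$ contributes $2^{|\bb_\lambda-\aa|}/2^n$), and your hedgerow count $\Delta_{0,\lambda}I = 2^{|\bb-\bb_\lambda|}$ is correct. But the fence enumeration has two genuine errors. First, you treat $i$ as the isolated vertex of $K^{\bb}I$ and conclude $r(i)=i$ with weight $1$ irrespective of the stake set. In this lemma $i$ is an endpoint of the edge $ij$; the isolated vertex is $k$, and that case is Lemma~\ref{sylvan0to1}, not this one. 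The hedge rim of $i$ depends on the choice of $S_0^{\bb}$: if $S_0^{\bb}=\{i\}$ then $r(i)=j$, while if $S_0^{\bb}=\{j\}$ then $r(i)=i$. This dependence is the crux of the argument: for a fixed $\lambda$, exactly one of the two stake-set choices at $\bb$, and exactly one of the two shrubbery choices at each interior degree above $\bb_\lambda$, is compatible with the direction in which $\lambda$ moves next, so each $\lambda$ supports exactly one hedgerow carrying exactly one chain-link fence, of weight $1$. Under your reading a path whose first step is in the $j$-direction would contribute nothing to $D_{\nothing i}^{\aa\bb}$, contradicting the claimed formula; and your assertion that the top faces are all the vertex $i$ is also false, since $\tau_{\ell-1}$ must be the vertex of the direction of the next step of $\lambda$, so the top faces vary with the path.

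Second, your source of the factor $2^{|\bb_\lambda-\aa|}$ is wrong. Below $\bb_\lambda$ the Koszul complexes contain at most one vertex, so there is exactly one hedge choice at each of those degrees and exactly one fence continuation --- not $2^{|\bb_\lambda-\aa|}$ weight-$1$ fences. With your counts the per-path contribution would be $2^{|\bb_\lambda-\aa|}/2^{|\bb-\bb_\lambda|}=2^{2|\bb_\lambda-\aa|-n}$, off by a factor of $2^{|\bb_\lambda-\aa|}$; there is no additional ``outer normalization'' available to absorb this. In the paper the inner sum $\sum_\phi w_\phi$ equals $1$ for every $\lambda$, and the summand $2^{|\bb_\lambda-\aa|}/2^n$ arises purely by rewriting $1/\Delta_{0,\lambda}I = 1/2^{|\bb-\bb_\lambda|}$ using $n=|\bb-\bb_\lambda|+|\bb_\lambda-\aa|$; the reinterpretation of $2^{|\bb_\lambda-\aa|}$ as a count of branching lattice paths belongs to Corollary~\ref{gridcounttheorem}, not to the fence count here. (A minor point: between $\bb$ and $\bb_\lambda$ you do not need, and cannot in general assume, that the edge $ij$ itself persists; the complex there has exactly the two vertices $i,j$, possibly without the edge, and the dimension-$0$ hedge count is $2$ either way.)
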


\begin{proof}
Consider all possible hedgerows along $\lambda \in \Lambda(\mathbf{a},\bb)$. 
In degree $\bb$, the stake set $S_0^{\bb}$ can be $\{ i \}$ or $\{ j\}$.
At each interior lattice point $\mathbf{b}_{\ell}$ such that $\bb_{\lambda} \prec \mathbf{b}_{\ell} \prec \bb$, the stake set $S_{-1}$ must be the set consisting of the empty face $\{ \nothing \}$, since $\nothing$ is a dimension $-1$ boundary.
A shrubbery $T_0^{\mathbf{b}_{\ell}}$ consists of either of the two vertices in $K^{\bb_{\ell}}I$.
Once the degree vector $\bb_{\lambda}$ is reached in the lattice path, there is only one choice of hedge for each subsequent degree vector along $\lambda$, since there is only one (or fewer) vertex in the Koszul simplicial complex.
In degree $\mathbf{a}$, the only shrubbery of dimension $-1$ is $T_{-1} = \{ \, \}$.
Therefore $\Delta_{\lambda,0}I = 2^{|\bb - \bb_{\lambda}|}$.

Next it is shown that hedgerows that yield chain-link fences are in bijection with lattice paths and with chain-link fences with initial post $i$.
Suppose in degree $\bb$, $S_0^{\bb} = \{ i \}$. 
Since the chain-link fence must start with $i$ and $i - j$ is a boundary, $r(i) = j$ and $c_i(j,S_0^{\bb}) = 1$.
Therefore the only vertex boundary-linked to $i$ is $j$.
The chain-link fence terminates unless the lattice path moves back in the $j$-direction.
If $S_0^{\bb} = \{ j \}$, then $i$ is boundary-linked to $i$ with weight $1$, since $r(i) = i$ and $c_i(i,S_0^{\bb}) = 1$.
The chain-link fence terminates unless the lattice path moves back in the $i$-direction.
In both cases the weight of the second edge in the chain-link fence is $(-1)^{\nothing \subset i} = (-1)^{\nothing \subset j} = 1$.
At each interior lattice point $\bb_{\ell}$ such that $\bb_{\lambda} \prec \bb_{\ell} \prec \bb$, the face $\nothing$ is chain-linked only to whichever vertex $v$ is in $T_0^{\bb_{\ell}}$, with weight $1$, since $s(\nothing) = v$.
At the next step, the chain-link fence terminates unless the lattice path moves back in the direction of that vertex.
Once $\bb_{\lambda}$ is reached, the vertex in the shrubbery is the same as the direction in which the lattice path moves back next.
The chain-link fence ends with $\nothing \in K^{\mathbf{a}}I$, which is cycle-linked to itself with weight $1$.
Therefore there is only one hedgerow along a given lattice path that yields a non-terminating chain-link fence, and it yields exactly one chain-link fence.
The weight $w_{\phi}=1$ for all chain-link fences $\phi$.
Thus
\begin{align*}
    D_{i}^{\mathbf{a} \bb} &= \sum\limits_{\lambda \in \Lambda(\mathbf{a},\bb)} \frac{1}{2^{|\bb - \bb_{\lambda}|}} \sum_{\phi \in \Phi_{\nothing i}(\lambda)} 1 \\
    &= \sum\limits_{\lambda \in \Lambda(\mathbf{a},\bb)} \frac{1}{2^{|\bb - \bb_{\lambda}|}}\\
    &= \sum\limits_{\lambda \in \Lambda(\mathbf{a},\bb)} \frac{2^{|\bb_{\lambda} - \mathbf{a}|}}{2^{|\bb_{\lambda} - \mathbf{a}|}2^{|\bb - \bb_{\lambda}|}}\\
    &= \frac{1}{2^n} \sum\limits_{\lambda \in \Lambda(\mathbf{a},\bb)} 2^{|\bb_{\lambda} - \mathbf{a}|}.&&\qedhere 
\end{align*}
\end{proof}

\begin{defn}\label{neighboringsyzygies}
Suppose $K^{\bb}I$ is \casetwo, and let $\{ \aa_{\ell} \}_{\ell=1}^s$ be the set of degree vectors of generators of $I$ that lie behind $\bb$ in the direction of the edge. 
Order the degree vectors $\aa_1, \aa_2, \ldots, \aa_{s}$ so that the $j^{th}$ component of $\aa_{\ell+1}$ is greater than the $j^{th}$ component of $\aa_{\ell}$ for $\ell = 1, \ldots, s-1$. 
Then the \textit{neighboring syzygies} of $\aa_{\ell}$ are the degree vectors $\text{lcm}(\aa_{\ell}, \aa_{\ell -1})$ and $\text{lcm}(\aa_{\ell}, \aa_{\ell +1})$.
\end{defn}

When $K^{\bb}I$ is \casetwo and $\mathbf{a}$ is a generator that lies behind $\bb$ in the direction of the edge $ij$, the numerator of the  sylvan matrix entries $D_{\nothing i }^{\mathbf{a} \bb}$ can be computed as a sum over certain lattice paths that each have weight $1$.
This is the content of the following corollary of Theorem \ref{compendium0to1}.

\begin{cor}\label{gridcounttheorem}
Suppose that $K^{\mathbf{b}}(I)$ is \casetwo.
Let $\{ \mathbf{a}_{\ell} \}_{\ell}$ be the set of degree vectors of generators that lie behind $\mathbf{b}$ in the $i j$-direction.
Then $D_{\nothing i}^{\mathbf{a}_{\ell} \bb}$ can be computed via the following steps:
\begin{enumerate}
\item Let $m := \text{max}_{\mathbf{a}_{\ell},\lambda \in \Lambda(\mathbf{a}_{\ell},\bb)} \{ |\bb - \bb_{\lambda} | \} $.
\item Draw an $m \times m$ grid with a diagonal line running from the bottom left corner to the top right corner. Label the generators and syzygies where they would fall on the staircase surface, with $\mathbf{b}$ in the bottom right corner.
\item $D_{\nothing i}^{\mathbf{a}_{\ell} \bb}$ is the number of saturated, decreasing lattice paths that start at $\mathbf{b}$, pass either between neighboring syzygies or through a neighboring syzygy and leaving in the direction of $\mathbf{a}_{\ell}$, and end on the diagonal divided by the total number of lattice paths from $\bb$ to the diagonal ($2^m$).
\end{enumerate}
\end{cor}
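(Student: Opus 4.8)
The plan is to obtain the corollary from Theorem~\ref{compendium0to1}(3) by converting its weighted lattice-path sum into an unweighted count of lattice paths in the $m\times m$ grid. First I would rewrite the formula of Theorem~\ref{compendium0to1}(3) in the shape it already acquires midway through the proof of Lemma~\ref{gridcount}: since $|\bb-\bb_\lambda|+|\bb_\lambda-\aa_\ell|=n$ for every $\lambda\in\Lambda(\aa_\ell,\bb)$, one gets $D_{\nothing i}^{\aa_\ell\bb}=\sum_{\lambda\in\Lambda(\aa_\ell,\bb)}2^{-|\bb-\bb_\lambda|}$, hence $2^m\,D_{\nothing i}^{\aa_\ell\bb}=\sum_{\lambda}2^{\,m-|\bb-\bb_\lambda|}$, in which every exponent $m-|\bb-\bb_\lambda|$ is nonnegative by the definition of $m$. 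A direct check shows that every length-$m$ decreasing lattice path from the bottom-right corner $\bb$ of the $m\times m$ grid stays inside the grid and ends on the diagonal from the bottom-left to the top-right corner, so there are exactly $2^m$ such paths; it therefore suffices to prove that $\sum_{\lambda}2^{\,m-|\bb-\bb_\lambda|}$ counts precisely those length-$m$ paths that satisfy the passing condition in step (3).

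The combinatorial core is that each path $\lambda\in\Lambda(\aa_\ell,\bb)$ is recovered from its truncation $\hat\lambda:=\lambda|_{[\bb,\bb_\lambda]}$ at the first degree $\bb_\lambda$ whose Koszul complex has at most one vertex. As in the proof of Lemma~\ref{gridcount}, once $\bb_\lambda$ is reached the only way to continue decreasing to a generator is to step straight in the direction of the lone vertex of $K^{\bb_\lambda}I$: $\aa_\ell$ lies behind $\bb$ only in the $ij$-directions, so $\lambda$ never steps in the $k$-direction, and if $K^{\bb_\lambda}I$ has a unique vertex $v$ then $\xx^{\bb_\lambda-e_w}\notin I$ for $w\ne v$, which forces $\aa_\ell$ to agree with $\bb_\lambda$ outside direction $v$; hence $\lambda$ runs straight from $\bb_\lambda$ to $\aa_\ell$ (and if $K^{\bb_\lambda}I$ has no vertex then already $\bb_\lambda=\aa_\ell$). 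Thus $\lambda\mapsto\hat\lambda$ is injective; write $\mathcal S_\ell$ for the set of these truncations. Conversely, a truncation $\hat\lambda\in\mathcal S_\ell$ admits exactly $2^{\,m-|\bb-\bb_\lambda|}$ extensions to a length-$m$ decreasing path from $\bb$, since the extra $m-|\bb-\bb_\lambda|$ steps are free binary choices that never leave the grid ($|\bb-\bb_\lambda|\le m$, and no coordinate of the extension exceeds $m$), and the truncation of any such extension $\mu$ at its own first at-most-one-vertex degree is again $\hat\lambda$, because every lattice point of $\hat\lambda$ before $\bb_\lambda$ carries a Koszul complex with at least two vertices whereas $\bb_\lambda$ is the first point of $\mu$ with at most one. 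Summing over $\hat\lambda\in\mathcal S_\ell$ identifies $\sum_{\lambda}2^{\,m-|\bb-\bb_\lambda|}$ with the number of length-$m$ decreasing paths from $\bb$ whose truncation lies in $\mathcal S_\ell$.

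It remains to recognize membership of a truncation in $\mathcal S_\ell$ as the passing condition of step (3). By the above, a length-$m$ path $\mu$ has truncation in $\mathcal S_\ell$ exactly when, at the first degree $\bb_\lambda$ along $\mu$ with at most one vertex, the unique generator lying straight behind $\bb_\lambda$ in the direction of the lone vertex of $K^{\bb_\lambda}I$ is $\aa_\ell$ — equivalently, when $\mu$ runs through the portion of the staircase surface on which $\aa_\ell$ is the generator that controls that surviving vertex. With the generators ordered as in Definition~\ref{neighboringsyzygies}, that portion is bounded away from the portions controlled by $\aa_{\ell-1}$ and $\aa_{\ell+1}$ exactly along the lattice points $\text{lcm}(\aa_\ell,\aa_{\ell-1})$ and $\text{lcm}(\aa_\ell,\aa_{\ell+1})$ of that definition, so $\mu$ enters it precisely when it passes strictly between the two neighboring syzygies, or passes through one of them and then leaves in the direction of $\aa_\ell$ (at the first and last generators in the ordering, one of the two bounding syzygies is replaced by the boundary of the region where $K^{\bb}I$ is \casetwo). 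This is exactly step (3); assembling the three steps gives $D_{\nothing i}^{\aa_\ell\bb}=(\#\,\text{such paths})/2^m$.

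The step I expect to be the main obstacle is the last one: making fully rigorous the correspondence between ``$\lambda$ extends to a non-terminating chain-link fence ending at $\aa_\ell$'' — which is what membership in $\Lambda(\aa_\ell,\bb)$ together with the hedgerow analysis of Lemma~\ref{gridcount} actually encodes — and the explicit subdivision of the staircase surface by the neighboring syzygies. Carrying this out requires a careful account of which generator controls the surviving vertex at each degree along the path, together with the verification that the regions controlled by consecutive generators $\aa_\ell$ and $\aa_{\ell+1}$ abut exactly at $\text{lcm}(\aa_\ell,\aa_{\ell+1})$.
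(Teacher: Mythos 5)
Your proposal is correct and follows essentially the same route as the paper: it reuses the intermediate form $D_{\nothing i}^{\aa_\ell\bb}=\sum_\lambda 2^{-|\bb-\bb_\lambda|}$ from Lemma~\ref{gridcount}, interprets each weight $2^{m-|\bb-\bb_\lambda|}$ as the number of length-$m$ grid paths that follow $\lambda$ to $\bb_\lambda$ and then branch freely to the diagonal (using that the tail from $\bb_\lambda$ to $\aa_\ell$ is forced), and identifies the contributing paths via the neighboring syzygies. The final identification you flag as the main obstacle is asserted at essentially the same level of detail in the paper's own proof, so your write-up is, if anything, more explicit about the truncation/extension bijection.
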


\begin{proof}
Lemma \ref{gridcount} says that the coefficients $D_{\nothing v}^{\mathbf{a} \bb}$ are sums over all saturated, decreasing lattice paths from $\bb$ to $\mathbf{a}$ of the weight associated to the lattice path, divided by $2^n$.
This weight is $2^{|\bb_{\lambda} - \mathbf{a}|}$.
This is the same as the number of lattice paths that follow along $\lambda$ until the lattice point $\bb_{\lambda}$ and then branch off in any (decreasing) direction for a length of $|\bb_{\lambda} - \mathbf{a}|$, i.e., until they reach the diagonal drawn in the $n \times n$ grid.
The lattice paths only contribute to $D_{\nothing v}^{\mathbf{a} \bb}$ if the original lattice path $\lambda$ ends at degree $\mathbf{a}$.
Therefore on the grid shown, the lattice paths must pass between neighboring syzygies of $\mathbf{a}$ or pass through the neighboring syzygies in the direction of $\mathbf{a}$.

An $m \times m$ grid can be drawn instead of an $n \times n$ grid because $2^{n - m}$ will divide every weight and can thus be canceled from the terms in the sum in \ref{gridcount}.
\end{proof}

\begin{example}\label{301}
Consider the ideal $I = \langle x^3z, xyz, y^2z, x^3y^2,x^2y^3 \rangle$ whose staircase diagram is given below.
$K^{321}I$ has facets $xy$ and $z$.
To use Corollary~\ref{gridcounttheorem}, note $m = \text{max}_{\mathbf{a},\lambda \in \Lambda(\mathbf{a},321)}\{ |321 - 321_{\lambda}| \} = 3$.
To compute the entries of $D^{301,321}$, $D^{111,321}$, and $D^{021,321}$, draw a $3 \times 3$ grid and plot the generators and neighboring syzygies.
There are five decreasing lattice paths that start at $\mathbf{321}$, pass through $\mathbf{311}$ and move up, pass between $\mathbf{311}$ and $\mathbf{121}$, or pass through $\mathbf{121}$ and move left and then end on the diagonal, as shown below.
Therefore $D_{\nothing, x}^{111,321} = D_{\nothing, y}^{111,321} = \frac{5}{2^3}$ by Corollary~\ref{gridcounttheorem}.
Similarly, $D_{\nothing x}^{301,321} = D_{\nothing y}^{301,321} = \frac{1}{4}$ and $D_{\nothing x}^{021,321} = D_{\nothing y}^{021,321} = \frac{1}{8}$.

\begin{center}
\begin{tikzpicture}[scale=0.75,every node/.style={scale=0.65}]
      \draw[fill=gray!20] (0,0) -- (1,0.5) -- (0,1) -- (-1,0.5) -- (0,0);
      \draw[fill=gray!20] (2,0) -- (3,0.5) -- (2,1) -- (1,0.5) -- (2,0);
      \draw[fill=gray!20] (1,-0.5) -- (2,0) -- (1,0.5) -- (0,0) -- (1,-0.5);
      \draw[fill=gray] (1,0.5) -- (1,1.5) -- (2,2) -- (2,1) -- (1,0.5);
      \draw[fill=gray!50] (1,0.5) -- (1,1.5) -- (0,2) -- (0,1) -- (1,0.5);
      \draw[fill=gray!20] (1,1.5) -- (2,2) -- (1,2.5) -- (0,2) -- (1,1.5);
      \draw[fill=gray!20] (2,2) -- (3,2.5) -- (2,3) -- (1,2.5) -- (2,2);
      \draw[fill=gray!20] (3,2.5) -- (4,3) -- (3,3.5) -- (2,3) -- (3,2.5);
      \draw[fill=gray!20] (1,2.5) -- (2,3) -- (1,3.5) -- (0,3) -- (1,2.5);
      \draw[fill=gray!20] (3,1.5) -- (4,2) -- (3,2.5) -- (2,2) -- (3,1.5);
      \draw[fill=gray!20] (4,2) -- (5,2.5) -- (4,3) -- (3,2.5) -- (4,2);
      \draw[fill=gray!50] (3,0.5) -- (3,1.5) -- (2,2) -- (2,1) -- (3,0.5);
      \draw[fill=gray!50] (3,3.5) -- (4,3) -- (4,4) -- (3,4.5) -- (3,3.5);
      \draw[fill=gray!50] (4,3) -- (5,2.5) -- (5,3.5) -- (4,4) -- (4,3);
      \draw[fill=gray] (3,3.5) -- (3,4.5) -- (2,4) -- (2,3) -- (3,3.5);
      \draw[fill=gray!50] (2,3) -- (2,4) -- (1,4.5) -- (1,3.5) -- (2,3);
      \draw[fill=gray] (1,3.5) -- (1,4.5) -- (0,4) -- (0,3) -- (1,3.5);
      \draw[fill=gray] (0,3) -- (0,4) -- (-1,3.5) -- (-1,2.5) -- (0,3);
      \draw[fill=gray!20] (0,3) -- (-1,2.5) -- (0,2) -- (1,2.5) -- (0,3);
      \draw[fill=gray] (0,1) -- (0,2) -- (-1,1.5) -- (-1,0.5) -- (0,1);
      \draw[fill=gray!20] (0,2) -- (-1,2.5) -- (-2,2) -- (-1,1.5) -- (0,2);
      \draw[fill=gray!20] (-1,2.5) -- (-2,3) -- (-3,2.5) -- (-2,2) -- (-1,2.5);
      \draw[fill=gray] (-2,3) -- (-2,4) -- (-3,3.5) -- (-3,2.5) -- (-2,3);
      \draw[fill=gray!50] (-1,2.5) -- (-1,3.5) -- (-2,4) -- (-2,3) -- (-1,2.5);
      \filldraw (0,1) circle (4pt);
      \node at (0,0.7) {$320$};
      \filldraw (2,1) circle (4pt);
      \node at (2,0.7) {$230$};
      \filldraw (-2,3) circle (4pt);
      \node at (-2,2.7) {$301$};
      \filldraw (1,3.5) circle (4pt);
      \node at (1,3.2) {$111$};
      \filldraw (3,3.5) circle (4pt);
      \node at (3,3.2) {$021$};
      \filldraw[fill=white] (1,1.5) circle (4pt);
      \node at (1,1.8) {$331$};
      \filldraw[fill=gray!20] (0,2) circle (4pt);
      \node at (0,2.3) {$321$};
      \filldraw[fill=gray!20] (2,2) circle (4pt);
      \node at (2,2.3) {$231$};
      \filldraw[fill=gray!20] (-1,2.5) circle (4pt);
      \node at (-1,2.2) {$311$};
      \filldraw[fill=gray!20] (2,3) circle (4pt);
      \node at (2,2.7) {$121$};
      \draw[->] (1,4.5) -- (1,5.5) node[right] {$z$};
       \draw[->] (-2,2) -- (-3,1.5) node[below] {$x$};
       \draw[->] (4,2) -- (5,1.5) node[below] {$y$};
\end{tikzpicture}
\hspace{2cm}
\begin{tikzpicture}[every node/.style={scale=0.6}]
      \draw[very thin] (0,0) -- (3,0);
      \draw[very thin] (0,0) -- (0,3);
      \draw[very thin] (0,3) -- (3,3);
      \draw[very thin] (3,0) -- (3,3);
      \draw[very thin] (1,0) -- (1,3);
      \draw[very thin] (2,0) -- (2,3);
      \draw[very thin] (0,1) -- (3,1);
      \draw[very thin] (0,2) -- (3,2);
      \draw[very thin] (0,0) -- (3,3);
      
      \draw[ultra thick] (3,0) -- (2,0);
      \draw[ultra thick] (2,0) -- (2,1);
      \draw[ultra thick] (2,1) -- (1,1);
      \filldraw (1,0) circle (3pt);
      \filldraw[fill=gray!20] (2,0) circle (3pt);
      \filldraw[fill=gray!20] (3,0) circle (3pt);
      \filldraw (2,2) circle (3pt);
      \filldraw[fill=gray!20] (3,2) circle (3pt);
      \filldraw (3,3) circle (3pt);
      \node at (1,-0.3) {$301$};
      \node at (2,-0.3) {$311$};
      \node at (3,-0.3) {$321$};
      \node at (3.4,2) {$121$};
      \node at (3.4,3) {$021$};
      \node at (1.7,2.3) {$111$};

\end{tikzpicture}
\begin{tikzpicture}[every node/.style={scale=0.6}]
      \draw[very thin] (0,0) -- (3,0);
      \draw[very thin] (0,0) -- (0,3);
      \draw[very thin] (0,3) -- (3,3);
      \draw[very thin] (3,0) -- (3,3);
      \draw[very thin] (1,0) -- (1,3);
      \draw[very thin] (2,0) -- (2,3);
      \draw[very thin] (0,1) -- (3,1);
      \draw[very thin] (0,2) -- (3,2);
      \draw[very thin] (0,0) -- (3,3);
      \draw[ultra thick] (3,0) -- (2,0);
      \draw[ultra thick] (2,0) -- (2,2);
      \filldraw (1,0) circle (3pt);
      \filldraw[fill=gray!20] (2,0) circle (3pt);
      \filldraw[fill=gray!20] (3,0) circle (3pt);
      \filldraw (2,2) circle (3pt);
      \filldraw[fill=gray!20] (3,2) circle (3pt);
      \filldraw (3,3) circle (3pt);
      \node at (1,-0.3) {$301$};
      \node at (2,-0.3) {$311$};
      \node at (3,-0.3) {$321$};
      \node at (3.4,2) {$121$};
      \node at (3.4,3) {$021$};
      \node at (1.7,2.3) {$111$};

\end{tikzpicture}
\begin{tikzpicture}[every node/.style={scale=0.6}]
      \draw[very thin] (0,0) -- (3,0);
      \draw[very thin] (0,0) -- (0,3);
      \draw[very thin] (0,3) -- (3,3);
      \draw[very thin] (3,0) -- (3,3);
      \draw[very thin] (1,0) -- (1,3);
      \draw[very thin] (2,0) -- (2,3);
      \draw[very thin] (0,1) -- (3,1);
      \draw[very thin] (0,2) -- (3,2);
      \draw[very thin] (0,0) -- (3,3);
      \draw[ultra thick] (3,0) -- (3,1);
      \draw[ultra thick] (3,1) -- (1,1);
      \filldraw (1,0) circle (3pt);
      \filldraw[fill=gray!20] (2,0) circle (3pt);
      \filldraw[fill=gray!20] (3,0) circle (3pt);
      \filldraw (2,2) circle (3pt);
      \filldraw[fill=gray!20] (3,2) circle (3pt);
      \filldraw (3,3) circle (3pt);
      \node at (1,-0.3) {$301$};
      \node at (2,-0.3) {$311$};
      \node at (3,-0.3) {$321$};
      \node at (3.4,2) {$121$};
      \node at (3.4,3) {$021$};
      \node at (1.7,2.3) {$111$};

\end{tikzpicture}
\begin{tikzpicture}[every node/.style={scale=0.6}]
      \draw[very thin] (0,0) -- (3,0);
      \draw[very thin] (0,0) -- (0,3);
      \draw[very thin] (0,3) -- (3,3);
      \draw[very thin] (3,0) -- (3,3);
      \draw[very thin] (1,0) -- (1,3);
      \draw[very thin] (2,0) -- (2,3);
      \draw[very thin] (0,1) -- (3,1);
      \draw[very thin] (0,2) -- (3,2);
      \draw[very thin] (0,0) -- (3,3);
       \draw[ultra thick] (3,0) -- (3,1);
      \draw[ultra thick] (3,1) -- (2,1);
      \draw[ultra thick] (2,1) -- (2,2);
      \filldraw (1,0) circle (3pt);
      \filldraw[fill=gray!20] (2,0) circle (3pt);
      \filldraw[fill=gray!20] (3,0) circle (3pt);
      \filldraw (2,2) circle (3pt);
      \filldraw[fill=gray!20] (3,2) circle (3pt);
      \filldraw (3,3) circle (3pt);
      \node at (1,-0.3) {$301$};
      \node at (2,-0.3) {$311$};
      \node at (3,-0.3) {$321$};
      \node at (3.4,2) {$121$};
      \node at (3.4,3) {$021$};
      \node at (1.7,2.3) {$111$};

\end{tikzpicture}
\begin{tikzpicture}[every node/.style={scale=0.6}]
      \draw[very thin] (0,0) -- (3,0);
      \draw[very thin] (0,0) -- (0,3);
      \draw[very thin] (0,3) -- (3,3);
      \draw[very thin] (3,0) -- (3,3);
      \draw[very thin] (1,0) -- (1,3);
      \draw[very thin] (2,0) -- (2,3);
      \draw[very thin] (0,1) -- (3,1);
      \draw[very thin] (0,2) -- (3,2);
      \draw[very thin] (0,0) -- (3,3);
      \draw[ultra thick] (3,0) -- (3,2);
      \draw[ultra thick] (3,2) -- (2,2);
      \filldraw (1,0) circle (3pt);
      \filldraw[fill=gray!20] (2,0) circle (3pt);
      \filldraw[fill=gray!20] (3,0) circle (3pt);
      \filldraw (2,2) circle (3pt);
      \filldraw[fill=gray!20] (3,2) circle (3pt);
      \filldraw (3,3) circle (3pt);
      \node at (1,-0.3) {$301$};
      \node at (2,-0.3) {$311$};
      \node at (3,-0.3) {$321$};
      \node at (3.4,2) {$121$};
      \node at (3.4,3) {$021$};
      \node at (1.7,2.3) {$111$};

\end{tikzpicture}
\end{center}
\end{example}

\begin{singlespace}
\section{Coefficients in the maps from 
$F_2$ to $F_1$}\label{coefficientsfrom2to1}
\end{singlespace}
The coefficients in the maps $\widetilde{H}_0(K^{\aa}I;\kk) \leftarrow \widetilde{H}_1 (K^{\bb}I;\kk)$ are determined by examining all sequences of Koszul complexes that can occur along a lattice path from $\bb$ to $\aa$.
These sequences are given in Figure \ref{cases}.
Let $D_{ve}^{\aa \bb, \lambda}$ be the contribution to $D_{ve}^{\aa \bb}$ along the lattice path $\lambda$, where $v$ is a vertex and $e$ is an edge.
The coefficients of the sylvan matrix are obtained by adding the contributions along all lattice paths from $\bb$ to $\aa$.
The theorem below describes all sylvan matrix entries $D_{ve}^{\aa \bb}$ in the three-variable case, where $v$ is a vertex and $e$ is an edge.
Define $(-1)^{i,\prod_{i_l} i_l}:= \text{sign}(i, \prod_{i_l} i_l)$, the sign the face of $i_1 \cdots \hat{i} \cdots i_l$ in $\partial(i_1 \cdots i_l)$.

\begin{thm}\label{threevariabletheorem}
Let $\bb$ and $\aa$ be degree vectors such that $\aa \preceq \bb$, $K^{\bb}I$ is \casezero, and $\text{dim}_{\kk} \widetilde{H}_0 (K^{\aa}I;\kk) \neq 0$.
Let $\lambda = (\aa = \bb_j, \ldots, \bb_1, \bb)$ be a saturated, decreasing lattice path such that $\bb - \bb_1 = e_i$.
Let $r$ be the number of times the lattice path $\lambda$ passes through a degree vector $\mathbf{c}$ such that $K^{\mathbf{c}}I$ is \caseone, let $s$ be the number of times $\lambda$ passes through a degree vector $\mathbf{c}$ such that $K^{\mathbf{c}}I$ is \casefive, and let $m$ be the number of times $\lambda$ passes through a degree vector $\mathbf{c}$ such that $K^{\mathbf{c}}I$ is \casetwo.
Let
\begin{itemize}
    \item $s_{i,ij} := (-1)^{j,ij}$,
    \item $s_{j,ij} := (-1)^{i,ij}$, and
    \item $s_{k,ij} := (-1)^{j,ij}$.
\end{itemize}
Then
\begin{enumerate}
    \item $D_{v,jk}^{\mathbf{a} \mathbf{b}, \lambda} = 0$ for all $v$.
    \item If $K^{\mathbf{a}}I$ is \casethreeprime, then $D_{v,ij}^{\mathbf{a} \mathbf{b}} = s_{v,ij} \frac{1}{2}$ for any vertex $v$ of $K^{\mathbf{a}}I$.
    \item If $K^{\mathbf{a}}I$ is \casetwo or \casefour, then $D_{i,ij}^{\mathbf{a}\mathbf{b}} = s_{i,ij} \frac{1}{3^{r+1}}$, $D_{j,ij}^{\aa \bb} = s_{j,ij} \frac{3^{r+1}+1}{2 \cdot 3^{r+1}}$, and $D_{k,ij}^{\aa \bb} = s_{k,ij} \frac{3^{r+1}-1}{2 \cdot 3^{r+1}}$. Here $j$ and $k$ can also be swapped.
    \item If $K^{\mathbf{a}}I$ is \casethree, then $D_{v,ij}^{\mathbf{a}\mathbf{b}, \lambda} = s_{v,ij} \frac{3^r+1}{3^r 2^{m+s+2}}$ for any vertex $v$ in $K^{\mathbf{a}}I$, $D_{i,ik}^{\mathbf{a} \mathbf{b},\lambda} = (-1)^{i,ik} \frac{3^r-1}{3^r 2^{m+s+2}}$, and $D_{j,ik}^{\mathbf{a} \mathbf{b},\lambda} = (-1)^{k,ik} \frac{3^r-1}{3^r 2^{m+s+2}}$.
\end{enumerate}
\end{thm}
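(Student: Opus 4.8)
The plan is to apply the combinatorial formula of Theorem~\ref{canonicalcombinatorialdescrip} with the simplified three-variable weights of Remark~\ref{simpler weights}, analyzing hedgerows and chain-link fences one lattice path $\lambda$ at a time; recall that $\Delta_{1,\lambda}I$ counts the hedgerows along $\lambda$ by Remark~\ref{capital Delta formula}. The homological index is $1$, so a chain-link fence runs from an edge $\tau=e$ of $K^{\bb}I$ down to a vertex $\sigma=v$ of $K^{\aa}I$, through edges $\tau_\ell\in K_1^{\bb_\ell}I$ and vertices $\sigma_\ell\in K_0^{\bb_\ell}I$. First I would pin the two ends of the fence. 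Since $K^{\bb}I$ is \casezero it has no $2$-face, so $\widetilde{B}_1(K^{\bb}I)=0$; hence the stake set $S_1^{\bb}\subseteq K_1^{\bb}I$ is forced to be empty, the hedge rim of an edge $\tau$ is $\tau$ itself, and the boundary-link condition forces $\tau_0=\tau$ with weight $1$. Because the first step of $\lambda$ is $\bb-\bb_1=e_i$, the next face is $\sigma_1=\tau\setminus\{i\}$, which is defined only when $i\in\tau$. This already gives part (1): for $e=jk$ no chain-link fence from $jk$ to $v$ along $\lambda$ exists, so $D_{v,jk}^{\aa\bb,\lambda}=0$. In the remaining parts $\tau\in\{ij,ik\}$ and $\sigma_1=\tau\setminus\{i\}$.

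Next I would set up a transfer (``state'') computation along $\lambda$, the state at degree $\bb_\ell$ being the current edge $\tau_\ell$ (or ``dead''). Reading off Figure~\ref{cases} the admissible types of $K^{\bb_\ell}I$, I would record for each the set of hedges $ST_1^{\bb_\ell}=(S_0^{\bb_\ell},T_1^{\bb_\ell})$ — hence its contribution to $\Delta_{1,\lambda}I$ — and the effect of each hedge on the fence: $\sigma_\ell=\tau_{\ell-1}\setminus\{k_\ell\}$ (with $e_{k_\ell}=\bb_{\ell-1}-\bb_\ell$) must lie in $S_0^{\bb_\ell}$, then the shrub $s(\sigma_\ell)\in\kk\{T_1^{\bb_\ell}\}$ selects the admissible edges $\tau_\ell$ with weight $c_{\sigma_\ell}(\tau_\ell,ST_1^{\bb_\ell})$, while the containment contributes $(-1)^{\sigma_\ell\subset\tau_{\ell-1}}$. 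The key local data: a \caseone degree (a $3$-vertex path with edges $ij,ik$) has three stake sets and one shrubbery, and averaging over its three hedges yields a transfer that genuinely mixes the states $ij$ and $ik$ — this is the source of the powers of $3$; a \casefive degree (the single edge $ij$) has two stake sets and one shrubbery, exactly one hedge being compatible with the incoming vertex, and it keeps the state at $ij$ while contributing a factor $\pm\tfrac12$ — the source of $2^{s}$; the disconnected interior types behave like \casefive and feed into the exponent $m$; and the terminal degree $\aa$ contributes $1/\Delta_0^TK^{\aa}I$ (equal to $\tfrac12$ for \casethree and \casethreeprime, and $\tfrac13$ for \casetwo and \casefour) times the coefficient of $\sigma$ in the circuit $\zeta_{T_0^{\aa}}(\sigma_j)$, which determines which vertex of $K^{\aa}I$ the fence reaches. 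Since each surviving fence has weight $\pm1$, the contribution of $\lambda$ is a product of these local factors.

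Then I would solve the recursion. The \caseone transfer, averaged over its three hedges, is a small fixed matrix with top eigenvalue $1$ and second eigenvalue of absolute value $\tfrac13$, so its $r$-fold iterate has entries of the form $\tfrac12(1\pm 3^{-r})$; composing with the terminal factor produces $\tfrac1{3^{r+1}}$, $\tfrac{3^{r+1}+1}{2\cdot 3^{r+1}}$, $\tfrac{3^{r+1}-1}{2\cdot 3^{r+1}}$ of part (3) and $\tfrac{3^{r}\pm1}{3^{r}2^{m+s+2}}$ of part (4), the exponent $m$ and the remaining powers of $2$ being exactly the branching count and normalizations of the terminal disconnected stretch, handled as in Lemma~\ref{gridcount}. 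Part (2) is the degenerate case in which $\lambda$ reaches \casethreeprime in one step, so $r=s=0$ and the lone terminal factor $\tfrac12$ with the circuit gives $s_{v,ij}\tfrac12$. For parts (2) and (3), where the assertion is about the full entry $D_{ve}^{\aa\bb}$, one must also check that every admissible $\lambda$ yields the same value (so the sum over $\Lambda(\aa,\bb)$ does not change it), which follows from the rigidity of admissible path shapes once $\aa$ and the first step $e_i$ are fixed. Finally, the signs $s_{v,ij}$ come out by telescoping the containment signs $(-1)^{\sigma_\ell\subset\tau_{\ell-1}}$ along the fence against the sign in $\zeta_{T_0^{\aa}}(\sigma_j)$; since consecutive $\tau_{\ell-1},\tau_\ell$ are tracked by the transfer, this product collapses to $(-1)^{j,ij}$, $(-1)^{i,ij}$, or $(-1)^{j,ij}$ according as $v=i$, $j$, or $k$.

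The main obstacle is the middle step: verifying, at every admissible configuration of Figure~\ref{cases}, that the hedges split into precisely the groups routing the chain-link fence with the claimed coefficients, so that $\Delta_{1,\lambda}I$ equals the product of the local hedge-counts and the surviving fences carry the right multiplicities — equivalently, that the \caseone transfer is exactly the weighted branching whose $r$-fold iterate telescopes to $\tfrac12(1\pm 3^{-r})$, and that in parts (2) and (3) this value is genuinely independent of $\lambda$.
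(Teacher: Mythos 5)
Your strategy is the same as the paper's: force $S_1^{\bb}=\{\,\}$ because \casezero{} has no $1$-boundaries, classify the interior complexes along $\lambda$ as in Figure~\ref{cases}, track whether the fence currently sits on $ij$ or $ik$, compute $\Delta_{1,\lambda}I$ as the product of local hedge counts (Lemma~\ref{deltaformula}), and finish with the choice of $T_0^{\aa}$ and a circuit coefficient. Your $2\times 2$ transfer matrix for \caseone{} with eigenvalues $1$ and $\tfrac13$ is exactly the induction $A_{n+1}=2A_n+B_n$, $B_{n+1}=A_n+2B_n$ of Lemma~\ref{zerotoone}, and $\tfrac12(1\pm 3^{-r})$ is $\lceil 3^r/2\rceil/3^r$, $\lfloor 3^r/2\rfloor/3^r$.

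Several steps as you state them are, however, wrong or incomplete. (i) Part (2) is not the degenerate case $r=s=0$: a path ending at \casethreeprime{} may pass through \caseone{} any number $r\ge 0$ of times (all steps in the $i$-direction), and the content of the paper's Lemma~\ref{zerotoonetothreeprime} is that the two terminal circuit coefficients recombine via $\lceil 3^r/2\rceil+\lfloor 3^r/2\rfloor=3^r$, so the $r$-dependence cancels and the entry is $\pm\tfrac12$; your sketch omits this cancellation. (ii) For parts (2)--(3) the justification that the per-path value equals the full entry cannot be that ``every admissible $\lambda$ yields the same value,'' since the entry is a sum, not an average, over $\Lambda(\aa,\bb)$; equal nonzero contributions from several paths would multiply the answer. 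The correct point, used in the paper, is that $j$ and $k$ both survive to degree $\aa$ in these cases, so every step of the path is forced to be in the $i$-direction, $\aa=\bb-ne_i$, and the lattice path is unique. (iii) The exponent $m$ in part (4) is the number of interior degrees of type \casetwo{} along $\lambda$ (it is $0$ or $1$; the theorem statement leaves it undefined, but this is what the cited lemmas produce). Its factor of $2$ enters through the two stake-set choices $\{i\},\{j\}$ for the lone edge, exactly the same mechanism as \casefive{}, together with the fact that a fence in state $ik$ dies there because $k$ cannot be a stake; it is not a lattice-path branching count ``as in Lemma~\ref{gridcount},'' which concerns the $F_0\leftarrow F_1$ maps. (iv) Your sign bookkeeping needs the parity observation that the number of $ij\leftrightarrow ik$ switches is determined mod $2$ by the initial and terminal edges and that each switch contributes exactly $(-1)^{i,ijk+1}$ (the computation $c_j(ik,\{j,i\})=(-1)^{i,ij}(-1)^{i,ijk+1}$ in Lemma~\ref{zerotoone}); only with this coherence do the fence counts double as weight sums, and this is precisely the verification you deferred as ``the main obstacle.''
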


\begin{proof}[Proof of \ref{threevariabletheorem}]
Statement $1$ follows from Lemma~\ref{zeroentrylemma}.
Statement $2$ follows from Lemmas~\ref{zerotothreeprime} and \ref{zerotoonetothreeprime}.
Statement $3$ follows from Lemmas~\ref{zerotofourortwo} and \ref{zerotoonetofour}.
Statement $4$ follows from Lemmas~\ref{zerototwotothree}, \ref{zerototwotofivetothree}, \ref{zerotoonetothree}, \ref{zerotoonetotwotothree}, and \ref{zerotoonetotwotofivetothree}.
\end{proof}

\subsection{Lattice path cases}\label{latticecases}
Suppose $K^{\mathbf{b}}I$ is \casezero (see the staircase diagram below).

\begin{center}
\begin{tikzpicture}[scale=1,every node/.style={scale=0.8}]
      \draw[fill=gray] (1,0.5) -- (1,1.5) -- (2,2) -- (2,1) -- (1,0.5);
      \draw[fill=gray!50] (1,0.5) -- (1,1.5) -- (0,2) -- (0,1) -- (1,0.5);
      \draw[fill=gray!20] (1,1.5) -- (2,2) -- (1,2.5) -- (0,2) -- (1,1.5);
      \draw[->] (0,1) -- (-1,0.5) node[above] {$i$};
      \draw[->] (2,1) -- (3,0.5) node[above] {$j$};
      \draw[->] (1,2.5) -- (1,3) node[right] {$k$};
      
      \draw[thick] (0.5,1.75) -- (1.5,1.75) -- (1,1) -- (0.5,1.75);
      \filldraw (0.5,1.75) circle (2pt);
      \filldraw (1.5,1.75) circle (2pt);
      \filldraw (1,1) circle (2pt);
      \filldraw[fill=white] (1,1.5) circle (4pt);
\end{tikzpicture}
\end{center}

Let $\mathbf{b_{\ell}}$ such that $\mathbf{a} \preceq \bb_{\ell} \preceq \bb$ and $\text{dim}_{\Bbbk} (\widetilde{H}_0 (K^{\mathbf{a}}I; \Bbbk))$ is nonzero.
Suppose further that $\mathbf{b}_{\ell}$ is a degree vector in a lattice path $(\aa = \bb_j, \bb_{j-1}, \ldots, \bb_1, \bb)$ where $\bb - \bb_1 = i$.
(By symmetry, the results also hold for lattice paths that initially move back in the $j$- or $k$-direction, exchanging each $i$ and $j$ or $k$, respectively.) 
Then there are several cases for $K^{\mathbf{b}_{\ell}}I$.
These cases are given below, with the degree $\bb_{\ell}$ marked by a white dot and the Koszul simplicial complexes drawn on the staircase diagram.
\begin{itemize}
    \item \caseone the edges $ij$ and $ik$
    
    \begin{tikzpicture}[scale=0.9,every node/.style={scale=0.8}]
     \draw[fill=gray!20] (1,1.5) -- (0,2) -- (-1,1.5) -- (0,1) -- (1,1.5);
     \draw[fill=gray] (1,1.5) -- (1,0.5) -- (0,0) -- (0,1) -- (1,1.5);
      \draw[fill=gray!20] (1,1.5) -- (2,2) -- (1,2.5) -- (0,2) -- (1,1.5);
      \draw[fill=gray] (1,1.5) -- (2,2) -- (2,1) -- (1,0.5) -- (1,1.5);
      
      \draw[thick] (0.5,1.75) -- (1.5,1.75) -- (1,1);
      \filldraw (0.5,1.75) circle (2pt);
      \filldraw (1.5,1.75) circle (2pt);
      \filldraw (1,1) circle (2pt);
   
      \filldraw[fill=white] (1,1.5) circle (4pt);
      \draw[->] (0,0.9) -- (-1,0.4) node[above] {$i$};
      \draw[->] (2,1) -- (3,0.5) node[above] {$j$};
      \draw[->] (1,2.5) -- (1,3.2) node[right] {$k$};
\end{tikzpicture}
   
    \item \casetwo the edge $ij$ and the vertex $k$
    
    \begin{tikzpicture}[scale=0.9,every node/.style={scale=0.8}]
      \draw[fill=gray!50] (1,0.5) -- (1,1.5) -- (0,2) -- (0,1) -- (1,0.5);
      \draw[fill=gray!20] (1,1.5) -- (2,2) -- (1,2.5) -- (0,2) -- (1,1.5);
      \draw[fill=gray!20] (0,3) -- (-1,2.5) -- (0,2) -- (1,2.5) -- (0,3);
      \draw[fill=gray] (0,1) -- (0,2) -- (-1,1.5) -- (-1,0.5) -- (0,1);
      \draw[fill=gray!20] (0,2) -- (-1,2.5) -- (-2,2) -- (-1,1.5) -- (0,2);
      
      \draw[thick] (-0.5,2.25) -- (0.5,2.25);
      \filldraw (-0.5,2.25) circle (2pt);
      \filldraw (0.5,2.25) circle (2pt);
      \filldraw (0,1.5) circle (2pt);

      \filldraw[fill=white] (0,2) circle (4pt);
      \draw[->] (-1,1.4) -- (-2,0.9) node[above] {$i$};
      \draw[->] (1,1.4) -- (2,0.9) node[above] {$j$};
      \draw[->] (0,3) -- (0,3.5) node[right] {$k$};
\end{tikzpicture}
    \item \casethree the vertices $i$ and $j$
    
    \begin{tikzpicture}[scale=0.9,every node/.style={scale=0.8}]
      \draw[fill=gray!20] (0,0) -- (1,0.5) -- (0,1) -- (-1,0.5) -- (0,0);
      \draw[fill=gray!20] (2,0) -- (3,0.5) -- (2,1) -- (1,0.5) -- (2,0);
      \draw[fill=gray!20] (1,-0.5) -- (2,0) -- (1,0.5) -- (0,0) -- (1,-0.5);
      \draw[fill=gray] (1,0.5) -- (1,1.5) -- (2,2) -- (2,1) -- (1,0.5);
      \draw[fill=gray!50] (1,0.5) -- (1,1.5) -- (0,2) -- (0,1) -- (1,0.5);
      
      \filldraw (0.5,0.75) circle (2pt);
      \filldraw (1.5,0.75) circle (2pt);
      
      \filldraw[fill=white] (1,0.5) circle (4pt);
      \draw[->] (-1,0.5) -- (-2,0) node[above] {$i$};
      \draw[->] (3,0.5) -- (4,0) node[above] {$j$};
      \draw[->] (1,1.5) -- (1,2.5) node[right] {$k$};
\end{tikzpicture}
    \item \casethreeprime the vertices $j$ and $k$
    
    \begin{tikzpicture}[scale=0.9,every node/.style={scale=0.8}]
      \draw[fill=gray!20] (1,2.5) -- (2,3) -- (1,3.5) -- (0,3) -- (1,2.5);
      \draw[fill=gray!50] (1,3.5) -- (1,4.5) -- (2,4) -- (2,3) -- (1,3.5);
      \draw[fill=gray!50] (2,4) -- (2,3) -- (3,2.5) -- (3,3.5) -- (2,4);
      \draw[fill=gray!50] (2,3) -- (2,2) -- (3,1.5) -- (3,2.5) -- (2,3);
      \draw[fill=gray] (2,3) -- (1,2.5) -- (1,1.5) -- (2,2) -- (2,3);
      
      \filldraw (1.5,3.25) circle (2pt);
      \filldraw (2,2.5) circle (2pt);
    
      \filldraw[fill=white] (2,3) circle (4pt);
      \draw[->] (1,2.3) -- (-1,1.3) node[above] {$i$};
      \draw[->] (3,1.5) -- (4,1) node[above] {$j$};
      \draw[->] (1,4.5) -- (1,5) node[right] {$k$};
\end{tikzpicture}
\newpage 
    \item \casefour the vertices $i$, $j$, and $k$
    
    \begin{tikzpicture}[scale=0.9,every node/.style={scale=0.8}]
      \draw[fill=gray!50] (1,0.5) -- (1,1.5) -- (0,2) -- (0,1) -- (1,0.5);
      \draw[fill=gray!20] (1,1.5) -- (2,2) -- (1,2.5) -- (0,2) -- (1,1.5);
      \draw[fill=gray!50] (0,3) -- (0,2) -- (-1,2.5) -- (-1,3.5) -- (0,3);
      \draw[fill=gray] (0,3) -- (0,2) -- (1,2.5) -- (1,3.5) -- (0,3);
      \draw[fill=gray] (0,1) -- (0,2) -- (-1,1.5) -- (-1,0.5) -- (0,1);
      \draw[fill=gray!20] (0,2) -- (-1,2.5) -- (-2,2) -- (-1,1.5) -- (0,2);
      
      \filldraw (-0.5,2.25) circle (2pt);
      \filldraw (0.5,2.25) circle (2pt);
      \filldraw (0,1.5) circle (2pt);

      \filldraw[fill=white] (0,2) circle (4pt);
      \draw[->] (-1,1.3) -- (-2,0.8) node[above] {$i$};
      \draw[->] (1,1.3) -- (2,0.8) node[above] {$j$};
      \draw[->] (0,3) -- (0,4) node[right] {$k$};
\end{tikzpicture}
    \item \casefive the edge $ij$
    
     \begin{tikzpicture}[scale=0.9,every node/.style={scale=0.8}]
      \draw[fill=gray!20] (1,1.5) -- (2,2) -- (1,2.5) -- (0,2) -- (1,1.5);
      \draw[fill=gray!20] (2,2) -- (3,2.5) -- (2,3) -- (1,2.5) -- (2,2);
      \draw[fill=gray!20] (1,2.5) -- (2,3) -- (1,3.5) -- (0,3) -- (1,2.5);
      \draw[fill=gray!20] (0,3) -- (-1,2.5) -- (0,2) -- (1,2.5) -- (0,3);
      
      \draw[thick] (0.5,2.75) -- (1.5,2.75);
      \filldraw (0.5,2.75) circle (2pt);
      \filldraw (1.5,2.75) circle (2pt);

      \filldraw[fill=white] (1,2.5) circle (4pt);
      \draw[->] (-1,2.5) -- (-2,2) node[above] {$i$};
      \draw[->] (3,2.5) -- (4,2) node[above] {$j$};
      \draw[->] (1,3.5) -- (1,4.5) node[right] {$k$};
\end{tikzpicture}
\end{itemize}

The lemmas below are used to determine all possible sequences of Koszul complexes along lattice paths beginning at \casezero and ending at a degree vector $\aa$ such that $\widetilde{H}_0 (K^{\aa}I; \kk) \neq 0$.
These sequences are given in Figure~\ref{cases}.

\begin{lemma}\label{Fthm}
If $\mathbf{b}$ and $\mathbf{a}$ lie on the staircase and $\mathbf{a} \preceq \mathbf{b}$, then $F:= \text{supp}(\mathbf{b} - \mathbf{a}) \in K^{\mathbf{b}}I$.
\end{lemma}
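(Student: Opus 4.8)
The plan is to unwind the definition of the Koszul simplicial complex and reduce the statement to a single divisibility. Writing $F$ also for the $0/1$ vector $\sum_{i\in F}e_i$, membership $F\in K^{\mathbf{b}}I$ means precisely that $\mathbf{b}-F\in\mathbb{N}^n$ (so that $F$ is an honest squarefree face of the simplex on vertex set $\mathrm{supp}(\mathbf{b})$) and that $\mathbf{x}^{\mathbf{b}-F}\in I$. So I would prove the lemma in two short steps: first the support condition, then the divisibility $\mathbf{x}^{\mathbf{a}}\mid\mathbf{x}^{\mathbf{b}-F}$; then, since $\mathbf{a}$ lies on the staircase we have $\mathbf{x}^{\mathbf{a}}\in I$, and therefore $\mathbf{x}^{\mathbf{b}-F}\in I$ because $I$ is closed under multiplication by monomials.

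For the support condition: if $i\in F$ then $b_i-a_i\ge 1$, and since $a_i\ge 0$ this forces $b_i\ge 1$, i.e.\ $i\in\mathrm{supp}(\mathbf{b})$; hence $F\subseteq\mathrm{supp}(\mathbf{b})$ and $\mathbf{b}-F\in\mathbb{N}^n$. For the divisibility I would check $\mathbf{a}\preceq\mathbf{b}-F$ coordinatewise: when $i\notin F$ we have $b_i=a_i$, so $(\mathbf{b}-F)_i=a_i$; when $i\in F$ we have $b_i\ge a_i+1$, so $(\mathbf{b}-F)_i=b_i-1\ge a_i$. Thus $\mathbf{a}\preceq\mathbf{b}-F$, which is exactly $\mathbf{x}^{\mathbf{a}}\mid\mathbf{x}^{\mathbf{b}-F}$. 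In the degenerate case $F=\varnothing$ the hypotheses force $\mathbf{a}=\mathbf{b}$, and the claim collapses to $\varnothing\in K^{\mathbf{b}}I$, i.e.\ $\mathbf{x}^{\mathbf{b}}\in I$, which again holds because $\mathbf{b}$ lies on the staircase.

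The only nonformal ingredient is that a degree vector on the staircase of $I$ has its monomial in $I$; I would take this from the definition of the staircase surface in \cite{emo20}, noting that it is forced anyway, since otherwise $K^{\mathbf{b}}I$ would be void and the conclusion $F\in K^{\mathbf{b}}I$ would fail. Granting this, the lemma follows immediately from the two bookkeeping steps above, so I anticipate no genuine obstacle; the only things to watch are the dual role of $F$ as a subset of $\{1,\dots,n\}$ and as the exponent vector in $\mathbf{x}^{\mathbf{b}-F}$, and the observation that what actually drives the argument is the hypothesis $\mathbf{a}\preceq\mathbf{b}$, the role of ``$\mathbf{b}$ lies on the staircase'' being only to make $K^{\mathbf{b}}I$ nonempty.
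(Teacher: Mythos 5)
Your proof is correct and is essentially the paper's argument: the paper disposes of the lemma by asserting the characterization $K^{\mathbf{b}}I = \{ \operatorname{supp}(\mathbf{b}-\mathbf{c}) \mid \mathbf{c} \text{ on the staircase},\ \mathbf{c}\preceq\mathbf{b}\}$, and your coordinatewise check of $\mathbf{a}\preceq\mathbf{b}-F$ together with $\xx^{\mathbf{a}}\in I$ is exactly the verification (of the containment actually needed) hiding behind that one line. No gap; your version just makes the divisibility bookkeeping explicit.
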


\begin{proof}
$K^{\mathbf{b}}I = \{ \text{supp}(\mathbf{b} - \mathbf{c} ) \mid \mathbf{c} \text{ is on the staircase and } \mathbf{c} \preceq \mathbf{b} \}$.
\end{proof}

\begin{lemma}\label{facet}
If $F$ is a facet of $K^{\mathbf{b}}I$, then $K^{\mathbf{a}}I \subseteq F$.
\end{lemma}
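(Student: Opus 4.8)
The plan is to chase exponent vectors, using only the definition of the Koszul complex together with Lemma~\ref{Fthm}. Write $F = \text{supp}(\bb - \aa)$ as in Lemma~\ref{Fthm}, so $F$ is already known to be a face of $K^{\bb}I$ and the added hypothesis is that it is a \emph{facet}; the assertion ``$K^{\aa}I \subseteq F$'' then means precisely that every face $\tau$ of $K^{\aa}I$ is a subset of $F$. So I would fix a face $\tau \in K^{\aa}I$ and suppose, toward a contradiction, that some coordinate $i \in \tau$ does not lie in $F$. Since $i \notin F = \text{supp}(\bb - \aa)$, the $i$-th entries of $\aa$ and $\bb$ agree. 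The idea is to exhibit a generator of $I$ that divides $\xx^{\bb - (\tau \cup F)}$; this would force $\tau \cup F$ to be a face of $K^{\bb}I$ strictly containing $F$, contradicting maximality of $F$.

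Concretely, $\tau \in K^{\aa}I$ unpacks to $\xx^{\aa - \tau} \in I$, so some minimal monomial generator $\xx^{\mathbf{g}}$ of $I$ divides $\xx^{\aa - \tau}$, i.e.\ $\mathbf{g} \preceq \aa - \tau$ (identifying $\tau$ with its $0/1$ indicator vector). The key step is then the coordinatewise inequality $\mathbf{g} \preceq \bb - (\tau \cup F)$, which I would verify in three groups of coordinates: for $\ell \in F$, $\;g_\ell \le (\aa-\tau)_\ell \le a_\ell \le b_\ell - 1 = (\bb - (\tau\cup F))_\ell$, using $a_\ell < b_\ell$; for $\ell \in \tau \setminus F$, $\;g_\ell \le a_\ell - 1 = b_\ell - 1 = (\bb - (\tau\cup F))_\ell$, using $a_\ell = b_\ell$; and for all remaining $\ell$, $\;g_\ell \le a_\ell \le b_\ell = (\bb - (\tau\cup F))_\ell$, using $\aa \preceq \bb$. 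Because $\mathbf{g} \succeq 0$, this inequality also shows $\tau \cup F \preceq \bb$, so $\tau \cup F$ is a legitimate face.

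It follows that $\xx^{\bb - (\tau \cup F)} \in I$, hence $\tau \cup F \in K^{\bb}I$; but $i \in \tau \setminus F$ makes $\tau \cup F \supsetneq F$, which contradicts $F$ being a facet of $K^{\bb}I$. Therefore no such $i$ exists, $\tau \subseteq F$, and since $\tau$ was arbitrary, $K^{\aa}I \subseteq F$. I do not anticipate a substantive obstacle: the proof is essentially a one-paragraph exponent comparison, and the only points requiring care are the three-way bookkeeping of coordinates (those in $F$, those in $\tau \setminus F$, and the rest) and the observation that it is the \emph{facet} hypothesis --- not merely that $F$ is a face --- that drives the contradiction. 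Note also that, beyond what Lemma~\ref{Fthm} already builds in, this argument uses neither the ``on the staircase'' hypotheses nor the restriction to three variables.
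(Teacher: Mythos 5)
Your proof is correct and takes essentially the same route as the paper: both arguments show that a face $\tau$ of $K^{\aa}I$ together with $F=\operatorname{supp}(\bb-\aa)$ lies inside a face of $K^{\bb}I$ and then invoke maximality of the facet $F$ to conclude $\tau\subseteq F$. Your version just verifies the required divisibility coordinatewise via a minimal generator, where the paper's (terser) proof uses the staircase description of the faces; the content is the same.
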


\begin{proof}
Let $\text{supp}(\mathbf{b} - \mathbf{c}) \in K^{\mathbf{a}}I$. 
Since $F = \text{supp}(\mathbf{b} - \mathbf{a})$ is a facet of $K^{\mathbf{b}}I$, $\text{supp}(\mathbf{b} - \mathbf{c}) \preceq \text{supp}(\mathbf{b} - \mathbf{a})$ for all $\mathbf{c} \preceq \mathbf{b}$ on the staircase.
Since $\text{supp}(\mathbf{c} - \mathbf{a}) \preceq \text{supp} (\mathbf{b} - \mathbf{a})$, $\text{supp}(\mathbf{c} - \mathbf{a})$ is a subface of $F$.
\end{proof}

\begin{lemma}\label{star}
$K^{\mathbf{a}}I \subseteq \text{star}(F, K^{\mathbf{b}}I)$, the set of faces of $K^{\mathbf{b}}I$ containing $F$.
\end{lemma}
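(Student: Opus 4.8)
The plan is to run everything through the explicit description of the Koszul complex recorded in the proof of Lemma~\ref{Fthm}, namely $K^{\mathbf{b}}I=\{\,\text{supp}(\mathbf{b}-\mathbf{c})\mid \mathbf{c}\text{ on the staircase},\ \mathbf{c}\preceq\mathbf{b}\,\}$, and to keep track of the identification of $K^{\mathbf{a}}I$ as a subset of $K^{\mathbf{b}}I$ already used in the proof of Lemma~\ref{facet}: the face $\text{supp}(\mathbf{a}-\mathbf{c})$ of $K^{\mathbf{a}}I$ is regarded inside $K^{\mathbf{b}}I$ as $\text{supp}(\mathbf{b}-\mathbf{c})=F\cup\text{supp}(\mathbf{a}-\mathbf{c})$, i.e.\ as the translate of $\text{supp}(\mathbf{a}-\mathbf{c})$ by $F=\text{supp}(\mathbf{b}-\mathbf{a})$. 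With this dictionary the lemma asks us to check, for every staircase point $\mathbf{c}\preceq\mathbf{a}$, that $\text{supp}(\mathbf{b}-\mathbf{c})$ is a face of $K^{\mathbf{b}}I$ and that it contains $F$.

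First I would dispatch membership: since $\mathbf{a}\preceq\mathbf{b}$, a staircase point $\mathbf{c}$ with $\mathbf{c}\preceq\mathbf{a}$ also satisfies $\mathbf{c}\preceq\mathbf{b}$, so $\text{supp}(\mathbf{b}-\mathbf{c})\in K^{\mathbf{b}}I$ directly from the displayed description. Next I would dispatch the containment of $F$: writing $\mathbf{b}-\mathbf{c}=(\mathbf{b}-\mathbf{a})+(\mathbf{a}-\mathbf{c})$ as a sum of two vectors with nonnegative entries (both $\mathbf{c}\preceq\mathbf{a}$ and $\mathbf{a}\preceq\mathbf{b}$ are in force), the support of the sum is the union of the supports, so $\text{supp}(\mathbf{b}-\mathbf{c})=F\cup\text{supp}(\mathbf{a}-\mathbf{c})\supseteq F$. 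Together these give $\text{supp}(\mathbf{b}-\mathbf{c})\in\text{star}(F,K^{\mathbf{b}}I)$, which is the assertion; this refines Lemma~\ref{facet}, recovered as the case where $F$ is a facet and $\text{star}(F,K^{\mathbf{b}}I)$ reduces to $F$ alone.

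As a sanity check I would also verify the statement by a direct divisibility argument avoiding staircase language: for $\sigma\in K^{\mathbf{a}}I$ one has $\mathbf{x}^{\mathbf{a}-\sigma}\in I$, and comparing exponent vectors coordinate by coordinate shows $\mathbf{b}-(\sigma\cup F)$ is a nonnegative vector dominating $\mathbf{a}-\sigma$ (they agree off $F$, and at a coordinate $\ell\in F$ one has $b_\ell-1\ge a_\ell$ because $b_\ell>a_\ell$), so $\mathbf{x}^{\mathbf{a}-\sigma}$ divides $\mathbf{x}^{\mathbf{b}-(\sigma\cup F)}$ and hence $\mathbf{x}^{\mathbf{b}-(\sigma\cup F)}\in I$, i.e.\ $\sigma\cup F\in K^{\mathbf{b}}I$. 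I do not expect a genuine obstacle here; the only point requiring care is bookkeeping, namely making sure the copy of $K^{\mathbf{a}}I$ for which the containment is asserted is the translate-by-$F$ copy implicit in Lemmas~\ref{Fthm} and \ref{facet}, not $K^{\mathbf{a}}I$ regarded abstractly as a subcomplex of the simplex on $ijk$.
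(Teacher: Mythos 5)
Your proposal is correct, and in fact your ``sanity check'' is precisely the paper's proof: the paper disposes of the lemma in one line by saying that moving back from $\mathbf{a}$ by $G$ is the same as moving back from $\mathbf{b}$ by $F\cup G$, and your coordinatewise divisibility argument ($\mathbf{b}-(\sigma\cup F)$ dominates $\mathbf{a}-\sigma$, so $\xx^{\mathbf{a}-\sigma}\in I$ forces $\xx^{\mathbf{b}-(\sigma\cup F)}\in I$) is exactly that statement made explicit --- and it is actually the sharper formulation, since when $\mathbf{b}-\mathbf{a}$ has a coordinate larger than $1$ the two degrees are not literally equal and only the divisibility survives. Your primary argument is a genuinely different route: it parametrizes faces via the description $K^{\mathbf{b}}I=\{\text{supp}(\mathbf{b}-\mathbf{c})\}$ from the proof of Lemma~\ref{Fthm} and uses that supports of sums of nonnegative vectors are unions of supports. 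That route is fine as far as it goes, but note it leans on the direction of the Lemma~\ref{Fthm} description asserting that \emph{every} face of $K^{\mathbf{a}}I$ arises as $\text{supp}(\mathbf{a}-\mathbf{c})$ for a staircase point $\mathbf{c}$, which is the less self-evident containment; the divisibility argument needs no such parametrization and covers every $\sigma\in K^{\mathbf{a}}I$ directly, so it is the one to keep. You are also right about the bookkeeping: the containment in Lemma~\ref{star} is to be read through the correspondence $G\mapsto F\cup G$ (equivalently, as landing in the closed star $\{G\mid F\cup G\in K^{\mathbf{b}}I\}$), which is consistent with how the lemma is applied in the proof of the proposition describing Figure~\ref{cases}.
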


\begin{proof}
Moving back from $\mathbf{a}$ by $G$ is the same as moving back from $\mathbf{b}$ by $F \cup G$.
\end{proof}

\begin{prop}
The diagram in Figure \ref{cases} shows all cases for lattice paths starting at the Koszul simplicial complex \casezero and ending at a degree vector $\aa$ such that $\widetilde{H}_0 (K^{\aa}I;\kk) \neq 0$.
\end{prop}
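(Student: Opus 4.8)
The statement is a finite combinatorial classification, and the plan is to prove it by a step-by-step transition analysis of the Koszul complexes $K^{\bb_\ell}I$ along the lattice path. Fix a saturated decreasing path $(\aa=\bb_j,\dots,\bb_1,\bb)$ with $K^{\bb}I$ of type \casezero; since the source of the maps under consideration is $\widetilde{H}_1(K^{\bb}I;\kk)$ and the hollow triangle is the only subcomplex of the $2$-simplex on $\{i,j,k\}$ with nonvanishing $\widetilde{H}_1$, this really is the only relevant starting type. For a single step $\bb_\ell\to\bb_{\ell+1}=\bb_\ell-e_m$, Lemma~\ref{Fthm} (applied to the comparable pair $\bb_\ell\succeq\bb_{\ell+1}$) forces $\{m\}$ to be a vertex of $K^{\bb_\ell}I$, so the path can only move back in a direction that currently indexes a vertex; and Lemma~\ref{star} forces $K^{\bb_{\ell+1}}I\subseteq\text{star}(\{m\},K^{\bb_\ell}I)$, i.e.\ every face $G$ of $K^{\bb_{\ell+1}}I$ satisfies $G\cup\{m\}\in K^{\bb_\ell}I$. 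These two necessary conditions cut down, for each of the seven types \casezero--\casefive\ (decorated by how its vertices and edges sit relative to the coordinate axes) and each admissible step direction, the list of types that can occur next; since every subcomplex of the $2$-simplex is realizable as a Koszul complex of some three-variable monomial ideal, the surviving transitions are exactly those realized, so the resulting transition diagram is neither too small nor too large.

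Next I would carry out the enumeration. By the $S_3$-symmetry of the triangle already invoked in the text, reduce to paths whose first step is in the $i$-direction; this fixes a reference axis and removes the labeling ambiguity between, e.g., \casethree\ and \casethreeprime. From \casezero\ the first step collapses the hollow triangle onto a subcomplex of $\text{star}(\{i\},K^{\bb}I)$, the path on $\{i,j,k\}$ with $i$ as central vertex, so $K^{\bb_1}I$ has type \caseone, \casetwo, \casethree, \casethreeprime, \casefour, \casefive, or is a single vertex. One then repeats the successor analysis from \caseone, \casetwo, and \casefive\ (the connected, $\widetilde{H}$-acyclic types, which are the only ones that may be traversed more than once), discarding any state from which no disconnected complex can be reached: a single vertex is a dead end by the star constraint, and by Lemma~\ref{facet} the multiset of facets is nonincreasing, so \casezero\ never recurs, and once $\text{supp}(\bb-\bb_\ell)$ becomes an edge — that is, two distinct directions have been used — the path is permanently confined to that edge. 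The maximal sequences so obtained all terminate at a disconnected type (\casetwo, \casethree, \casethreeprime, or \casefour), and reading them off reproduces precisely the transition diagram of Figure~\ref{cases}; for the converse it suffices to exhibit, for each displayed sequence, a concrete three-variable monomial ideal realizing it, and these double as the examples of Section~\ref{coefficients1to0} and Section~5.

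The main obstacle is bookkeeping rather than any single hard idea: each of the several complex types admits several step directions, and for each the star and facet constraints permit several successors with several admissible axis-labelings, so the case tree branches substantially. The two places to be careful are (i) applying Lemma~\ref{star} with the correct reading $G\cup\{m\}\in K^{\bb_\ell}I$, in particular distinguishing \emph{facet directions} — those $m$ for which $\text{supp}(\bb-\bb_{\ell+1})$ is a facet of $K^{\bb}I$, which are confining via Lemma~\ref{facet} — from ordinary vertex directions, which only pass to the open star; and (ii) tracking the axis decoration faithfully through each transition, so that the distinctions among \casethree, \casethreeprime, and \casefour, and among the various edges, are not conflated. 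Once these are handled, the enumeration is a routine but lengthy walk through the seven states, and it terminates because every walk that does not immediately dead-end eventually reaches a disconnected, hence absorbing, state along an edge whose facet has strictly shrunk.
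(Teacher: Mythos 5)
There is a genuine gap at the step where you claim that your two necessary conditions cut the transition list down to exactly Figure~\ref{cases}. The conditions you extract—(A) the step direction $m$ must index a vertex of $K^{\bb_\ell}I$, and (B) every face $G$ of $K^{\bb_{\ell+1}}I$ satisfies $G\cup\{m\}\in K^{\bb_\ell}I$ (Lemma~\ref{star})—are upper bounds only, and they do \emph{not} exclude transitions that the figure (correctly) omits. Concretely, the one-step transition \casezero\ $\to$ \casethree\ in the $i$-direction passes both tests: $i$ is a vertex of the hollow triangle, and every face $G\in\{\nothing,i,j\}$ has $G\cup\{i\}$ in $K^{\bb}I$. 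The same goes for \casezero\ $\to$ \casefive, and for \caseone\ $\to$ \casethree\ or \casefive\ in the $i$-direction (the figure allows these targets only with label $j$). What rules these out in the paper is the \emph{other} use of Lemma~\ref{Fthm}, which your proposal never makes: since $ij$ and $ik$ lie in $K^{\bb}I$, the lattice points $\bb-e_i-e_j$ and $\bb-e_i-e_k$ lie in the staircase region, so Lemma~\ref{Fthm} applied at $\bb-e_i$ forces both $j$ and $k$ to persist as vertices of $K^{\bb-e_i}I$. This persistence lower bound, combined with the star upper bound, is the engine of the paper's proof; using Lemma~\ref{Fthm} only to constrain the step direction, your procedure produces a strictly larger diagram than Figure~\ref{cases}.

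The companion claim, that because every subcomplex of the $2$-simplex is realizable as some Koszul complex the surviving transitions are exactly those realized, is a non sequitur: realizability of each complex individually says nothing about which ordered pairs can occur at consecutive degrees $\bb_\ell\succ\bb_\ell-e_m$ for a single ideal, and the persistence constraint above is precisely an obstruction even when both types pass test (B). Your remaining ingredients are essentially sound—the dead-end discussion, and the confinement to an edge via Lemma~\ref{facet} once $\text{supp}(\bb-\bb_\ell)$ is an edge of the hollow triangle, parallel the paper's observation that \casethree, \casethreeprime, \casefour\ are absorbing—and the hypothesis of Lemma~\ref{Fthm} that $\bb_{\ell+1}$ lies on the staircase does hold along the relevant paths because the terminal degree satisfies $x^{\aa}\in I$ (so $x^{\bb_{\ell+1}}\in I$ by divisibility), though you should say so explicitly. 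But without the persistence argument the classification itself is not established.
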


\begin{proof}
Start at a degree vector $\bb$ such that $K^{\bb}I$ is  \casezero and move back in the $i$-direction to degree $\mathbf{b}_1$.
Then $F = \text{supp}(\mathbf{b} - \mathbf{b}_1) = i$. 
By Lemma~\ref{star}, $K^{\mathbf{b}_1}I \subseteq \text{star}(F, K^{\mathbf{b}}I)$, so $K^{\mathbf{b}_1}I$ must be a subcomplex of \caseone. 
Since $ik$ is in $K^{\mathbf{b}}I$, the degree vectors $\mathbf{b} - i - k$ and $\mathbf{b} - i$ are on the staircase, which means that $\text{supp}((\mathbf{b} - i) - (\mathbf{b} - i - k)) = k \in K^{\mathbf{b - i}}I = K^{\mathbf{b}_1}I$ by Lemma~\ref{Fthm}.
Therefore \casezero cannot lead to \casethree or \casefive.

The complex \caseone can lead to any of \caseone, \casetwo, \casethree, \casethreeprime, \casefour, or \casefive.
Moving back along a single edge (in the $i$-, $j$-, or $k$-direction) from $\mathbf{b}$ to $\mathbf{b}_1$, $\text{supp}(\bb - \bb_1)= i$, $j$, or $k$.
Since $K^{\mathbf{b}_1}I \subseteq \text{star}(\text{supp}(\bb - \bb_1, K^{\mathbf{b}}I)$, \caseone can lead to \caseone in the $i$-direction only.
Similarly, any time $j$ and $k$ are both in $K^{\mathbf{b}_1}I$, the lattice path can only have moved back in the $i$-direction.
This is the case when $K^{\bb_1}I$ is \caseone, \casetwo, \casethreeprime, or \casefour.
Since $K^{\mathbf{b - e_i}}I$ must contain $j$ and $k$, the lattice path can only get to \casethree or \casefive by moving back in the $j$-direction.

The relevant subcomplexes of \casetwo are \casetwo, \casethree, \casethreeprime, \casefour, or \casefive.
If the lattice path moves back in the $i$- or $j$-direction, the support of the difference of the two neighboring degree vectors is $i$ or $j$. Therefore \casetwo cannot lead to \casethreeprime, since $K^{\mathbf{b}_{\ell}}I$ cannot contain $k$ by Lemma~\ref{star}.
Similarly, if the lattice path moves back in the $k$-direction, $K^{\mathbf{b}_{\ell}}I$ cannot contain $i$ or $j$.
Therefore \casetwo also cannot be followed by \casetwo or \casefour.
However, \casetwo can lead to \casethree or \casefive.

Subcomplexes of \casefive are either \casethree or \casefive, and both of these sequences are possible along a lattice path.

By Lemma~\ref{star}, \casethree, \casethreeprime, and \casefour cannot lead to any case where the reduced $0^{th}$ homology is nonzero.
\end{proof}

\fig
\begin{tikzpicture}
      \node[scale=1.5] at (-0.9,0) {\casezero};
      \draw[->] (0,0) -- (2,9);
      \node[scale=1.5] at (2.9,9) {\caseone};
      \node[scale=2] at (2.9,9.9) {$\circlearrowright$};
      \node at (2.9,9.9) {$i$};
      \draw[->] (0,0) -- (2,4);
      \node[right, scale=1.5] at (2,4) {\casetwo};
      \draw[->] (0,0) -- (2,1);
      \node[right,scale=1.5] at (2,1.2) {\casethreeprime};
      \draw[->] (0,0) -- (2,-1);
      \node[right, scale=1.5] at (2,-1) {\casefour};
      \draw[->] (3.8,9) -- (5.5,13);
      \node[right, scale=1.5] at (5.5,13) {\casetwo};
      \draw[->] (3.8,9) -- (5.5,11);
      \node[right, scale=1.5] at (5.5,11) {\casethree};
      \draw[->] (3.8,9) -- (5.5,9.5);
      \node[right, scale=1.5] at (5.5,9.5) {\casethreeprime};
      \draw[->] (3.8,9) -- (5.5,8.1);
      \node[right, scale=1.5] at (5.5,8.1) {\casefour};
      \draw[->] (3.8,9) -- (5.5,7);
      \node[right, scale=1.5] at (5.5,7) {\casefive};
      \draw[->] (7.3,7) -- (9,7);
      \node[right, scale=1.5] at (9,7) {\casethree};
      \node[scale=2] at (6.5,6.15) {$\circlearrowright$};
      \node at (6.5,5.7) {$i,j$};
      \node at (8,7.2) {$i,j$};
      \draw[->] (3.8,4) -- (5.5,4.5);
      \node[right, scale=1.5] at (5.5,4.5) {\casethree};
      \draw[->] (3.8,4) -- (5.5,3.5);
      \node[right, scale=1.5] at (5.5,3.5) {\casefive};
      \node[scale=2] at (6.5,2.6) {$\circlearrowright$};
      \node at (6.5,2.1) {$i,j$};
      \draw[->] (7.3,13) -- (9,13);
      \node[right,scale=1.5] at (9,15) {\casethree};
      \node at (8,13.99) {$i,j$};
      \draw[->] (7.3,13) -- (9,14.6);
      \node[right, scale=1.5] at (9,13) {\casefive};
      \node at (8,13.2) {$i,j$};
      \node[scale=2] at (10,12) {$\circlearrowright$};
      \node at (10,11.5) {$i,j$};
      \draw[->] (7.3,3.5) -- (9,3.5);
      \node[right, scale=1.5] at (9,3.5) {\casethree};
      \node at (8,3.7) {$i,j$};
      \draw[->] (10.8,13) -- (11.8,13);
      \node[right, scale=1.5] at (11.8,13) {\casethree};
      \node at (11.2,13.2) {$i,j$};
      \node at (1,5.1) {$i$};
      \node at (1,2.5) {$i$};
      \node at (1,0.8) {$i$};
      \node at (1,-0.3) {$i$};
      \node at (4.6,11.5) {$i$};
      \node at (4.6,10.4) {$j$};
      \node at (4.6,9.5) {$i$};
      \node at (4.6,8.8) {$i$};
      \node at (4.6,8.3) {$j$};
      \node at (4.65,4.5) {$i,j$};
      \node at (4.65,3.9) {$i,j$};
      
    \end{tikzpicture}
    \caption{All cases for Koszul complexes along lattice paths that begin by moving back in the $i$-direction contributing to the maps $F_1 \leftarrow F_2$}\label{cases}
    \efig

\subsection{Sylvan matrix entries $D_{ve}^{\aa \bb}$ when $\aa = \bb - ne_i$}

When $K^{\bb}I$ is \casezero, $K^{\aa}I$ is \casetwo, \casethreeprime, or \casefour, and $\aa \preceq \bb$, there is exactly one saturated, decreasing lattice path from $\bb$ to $\aa$.
This is because the lattice path must move back in the $i$-direction at each step (see Figure \ref{cases}).
The lemmas in this section give the entries $D_{ve}^{\aa \bb}$ of the sylvan matrices, where $e$ is an edge and $v$ is a vertex.
These lemmas contribute to the proof of Theorem~\ref{threevariabletheorem}.

As before, let $(-1)^{i,\prod_{i_l} i_l}:= \text{sign}(i, \prod_{i_l} i_l)$, the sign the face of $i_1 \cdots \hat{i} \cdots i_l$ in $\partial(i_1 \cdots i_l)$.

\begin{lemma}\label{zerotofourortwo}
Suppose $\bb$ and $\aa := \bb-e_i$ are degree vectors such that $K^{\bb}I$ is  \casezero and $K^{\aa}I$ is  \casetwo or \casefour.
Then the sylvan matrix is given below.
$$%
\HH_0 K^{\aa} \!\otimes\! \langle \xx^{\aa} \rangle
\xleftarrow{
\monomialmatrix
  {i\\ j\\ k}
  {\begin{array}{@{}l@{\ }c@{\ }r@{}}
    \quad\, i j &\ \ i k & j k\ \,
    \\
         \frac{(-1)^{j,ij}}{3}&\,\,  \frac{(-1)^{k,ik}}{3}  &  0
      \\ \frac{2(-1)^{i,ij}}{3}&\,\, \frac{(-1)^{k,ik}}{3}& 0
      \\ \frac{(-1)^{j,ij}}{3}&\,\, \frac{2(-1)^{i,ik}}{3} &  0
   \end{array}}
  {\\\\\\}
\!\!\!}
{\HH_1 K^{\bb} \!\otimes\! \langle \xx^{\bb} \rangle}
$$
\end{lemma}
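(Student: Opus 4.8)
The plan is to apply Theorem~\ref{canonicalcombinatorialdescrip} directly, exploiting that $\aa=\bb-e_i$ leaves exactly one lattice path in $\Lambda(\aa,\bb)$, namely the single-step path $\lambda=(\bb,\aa)$ moving back in the $e_i$-direction. Thus $D_{\sigma\tau}^{\aa\bb}=\frac1{\Delta_{1,\lambda}I}\sum_{\phi\in\Phi_{\sigma\tau}(\lambda)}w_\phi$, where $\tau$ runs over the three edges $ij,ik,jk$ of the complex \casezero\ and $\sigma$ over the vertices of $K^\aa I$. Since $K^\aa I$ has vertex set $\{i,j,k\}$ whether it is \casetwo\ or \casefour, and all of the dimension-$0$ data entering the formula depend only on this vertex set, the two cases can be treated simultaneously.

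I would first enumerate the hedgerows on $\lambda$. Because $\lambda$ has length one, a hedgerow consists only of a dimension-$1$ stake set $S_1^\bb\subseteq K^\bb I$ together with a dimension-$0$ shrubbery $T_0^\aa\subseteq K^\aa I$, with no interior hedges. In \casezero\ one has $\widetilde B_1(K^\bb I;\kk)=0$ and $\widetilde C_1/\widetilde B_1$ spanned by all three edges, so $S_1^\bb=\nothing$ is the only dimension-$1$ stake set; a dimension-$0$ shrubbery must be a single vertex, since its boundary is forced to span the line $\widetilde B_{-1}=\kk\{\nothing\}$. Hence the hedgerows are indexed by the choice $v\in\{i,j,k\}$ of $T_0^\aa$, and by Remarks~\ref{delta formula} and \ref{capital Delta formula}, $\Delta_{1,\lambda}I=1\cdot 3=3$.

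Next I would trace the chain-link fences. For a length-one path a fence from an edge $\tau$ to a vertex $\sigma$ has the shape $\tau\text{---}\tau_0\supset\sigma_1\text{---}\sigma$, with weight $c_\tau(\tau_0,S_1^\bb)\,(-1)^{\sigma_1\subset\tau_0}\,c_{\sigma_1}(\sigma,T_0^\aa)$ by Remark~\ref{simpler weights}. Since $\widetilde B_1(K^\bb I)=0$ and $S_1^\bb=\nothing$, the hedge rim of $\tau$ equals $\tau$, so $\tau_0=\tau$ with coefficient $1$; the containment link then forces $\sigma_1=\tau\setminus\{i\}$, which is impossible unless $i\in\tau$, so the $jk$-column of the matrix vanishes. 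For $\tau=ij$ this gives $\sigma_1=j$ with containment sign $(-1)^{\sigma_1\subset\tau_0}=(-1)^{i,ij}$, and for $\tau=ik$ it gives $\sigma_1=k$ with sign $(-1)^{i,ik}$. Finally, in a one-vertex shrubbery $T_0^\aa=\{v\}$ the circuit of a vertex is $\zeta_{\{v\}}(\sigma_1)=\sigma_1-v$ when $v\neq\sigma_1$ and $0$ when $v=\sigma_1$, so the hedgerow with $v=\sigma_1$ produces no fence, while the other two produce the fence ending at $\sigma=\sigma_1$ with cycle-link weight $1$ and the fence ending at $\sigma=v$ with cycle-link weight $-1$.

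Assembling the $ij$-column, the hedgerows $v=i$ and $v=k$ each contribute $(-1)^{i,ij}$ to the $(j,ij)$ entry, so $D_{j,ij}^{\aa\bb}=\frac13\cdot 2\,(-1)^{i,ij}$; only $v=i$ feeds the $(i,ij)$ entry and only $v=k$ the $(k,ij)$ entry, each with weight $-(-1)^{i,ij}$, giving $D_{i,ij}^{\aa\bb}=D_{k,ij}^{\aa\bb}=\frac13\bigl(-(-1)^{i,ij}\bigr)$; using $(-1)^{j,ij}=-(-1)^{i,ij}$ these are exactly the claimed matrix entries, and the $ik$-column follows by the identical argument with $j$ and $k$ interchanged. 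I expect the bulk of the work, and the only place to slip, to be the low-dimensional bookkeeping: evaluating the dimension-$(-1)$ reduced chains correctly when counting stake sets and shrubberies and when computing $\zeta_{\{v\}}$, checking that the presence or absence of the edge $ij$ in $K^\aa I$ is genuinely irrelevant so that \casetwo\ and \casefour\ give the same matrix, and keeping the containment sign straight so that the identity $(-1)^{j,ij}=-(-1)^{i,ij}$ turns the raw $(-1)^{i,ij}$'s into the $(-1)^{j,ij}$ and $(-1)^{k,ik}$ recorded in the statement.
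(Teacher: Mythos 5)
Your proposal is correct and follows essentially the same route as the paper's proof: the unique one-step lattice path forces $S_1^{\bb}=\{\,\}$, the three choices of $T_0^{\aa}$ give $\Delta_{1,\lambda}I=3$, and the fence weights (boundary-link $1$, containment $(-1)^{i,ij}$ or $(-1)^{i,ik}$, cycle-link $\pm1$ depending on the shrubbery vertex) assemble into exactly the stated entries, with the $jk$-column vanishing because the path moves back in the $i$-direction. The bookkeeping you flag (the degenerate dimension $-1$ data, the irrelevance of the edge in \casetwo\ versus \casefour, and the sign identity $(-1)^{j,ij}=-(-1)^{i,ij}$) is handled the same way in the paper.
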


\begin{proof}
The hedgrow along $\lambda$ consists of a stake set $S_1^{\bb}$ and a shrubbery $T_0^{\aa}$.
The stake set $S_1^{\bb}$ is forced to be the empty set $\{ \, \}$, since the only $1$-boundary is $0$.
The shrubbery $T_0^{\aa}$ can contain any one of the vertices $i$, $j$, or $k$.
Therefore the number of hedgerows is $3$, so $\Delta_{1,\lambda}I = 3$.
Given the stake set $S_1^{\bb}$, since the only $1$-boundary is $0$, the hedge rim is $r(ij) = ij$, and $c_{ij}(ij, S_1^{\bb}) = 1$.
Given the shrubbery $T_0^{\aa} = \{ j \}$, the circuit $\zeta_{T_0^{\aa}} (j) = 0$, so the chain-link fence terminates.
If $T_0^{\aa} = \{i \}$ or $\{ k\}$, then $\zeta_{T_0^{\aa}} (j) = j-i$ or $j-k$, respectively.
In these cases, $c_{j}(j, T_0^{\aa}) = 1$ and $c_{j}(v, T_0^{\aa}) = -1$ for the vertex $v$ in the shrubbery.
All chain-link fences with initial post $ij$ are given in the diagram below. 
\begin{singlespace}
$$
\begin{array}{*{4}{@{}c@{}}}
\\[-2.2ex]
    &     &     & i j \edgehoriz 1 i j
\\  &   &\edgeup{(-1)^{i,ij}}&
\\
\begin{array}{@{}r@{\ }c@{\ \,}c@{\,}c@{}}
\\[-6.3ex]
  & j\!\!\!
\\[-1.3ex]
   T_0 = \{i\}\colon
  & & \diagdown\hspace{.1ex}\raisebox{1ex}{\tiny\mkl{\hspace{-2ex}\scriptstyle 1}}
\\[-1.7ex]
  & i \raisebox{.75ex}{\tiny\mkl{\,\scriptstyle -1}} & \fillbar
\\[-1.6ex]
  &                                                  &          & j
\\[-1.6ex]
  & j \raisebox{-.5ex}{\tiny\mkl{\ \ \scriptstyle 1}} & \fillbar
\\[-1.7ex]
   T_0 = \{k\}\colon
  & & \diagup\hspace{.1ex}\raisebox{-.5ex}{\tiny\mkl{\hspace{-2.5ex}\scriptstyle -1}}
\\[-1.5ex]
  & k \!\!\!
\\[-3.6ex]
\end{array}
    &&&\\
  &&&\mko{S_1 = \{ \, \}}
\\[-.2ex]
\end{array}
$$
\end{singlespace}
The lattice paths with initial post $i k$ are the same once every $j$ and $k$ are swapped.
Any chain-link fence with initial post $j k$ will immediately terminate, since $\lambda$ moves back in the $i$-direction.
\end{proof}

\begin{lemma}\label{zerotothreeprime}
Suppose $\bb$ and $\aa := \bb - e_i$ are degree vectors such that $K^{\bb}I$ is \casezero and $K^{\aa}I$ is \casethreeprime.
Then the sylvan matrix is given below.
$$%
\HH_0 K^{\aa} \!\otimes\! \langle \xx^{\aa} \rangle
\xleftarrow{
\monomialmatrix
  {j\\ k}
  {\begin{array}{@{}l@{\ }c@{\ }r@{}}
    \quad\, i j &\ \ i k & j k\ \,
    \\
       \frac{(-1)^{i,ij}}{2} & \,\, \frac{(-1)^{k,ik}}{2} & 0
       \\ \frac{(-1)^{j,ij}}{2} & \,\, \frac{(-1)^{i,ik}}{2} & 0 
   \end{array}}
  {\\\\}
\!\!\!}
{\HH_1 K^{\bb} \!\otimes\! \langle \xx^{\bb} \rangle}
$$
\end{lemma}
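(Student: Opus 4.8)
I would follow the template of the proof of Lemma~\ref{zerotofourortwo}, with the one structural change that $K^{\aa}I$ now has only two vertices rather than three (which both deletes a row from the matrix and halves the denominators). Since $\aa=\bb-e_i$ and $K^{\bb}I$ is \casezero, the discussion preceding this lemma together with Figure~\ref{cases} shows there is exactly one saturated decreasing lattice path $\lambda$ from $\bb$ to $\aa$, namely the single step back in the $e_i$-direction. So Theorem~\ref{canonicalcombinatorialdescrip} collapses to $D_{\sigma\tau}^{\aa\bb}=\tfrac{1}{\Delta_{1,\lambda}I}\sum_{\phi\in\Phi_{\sigma\tau}(\lambda)}w_\phi$ with a single $\lambda$, and the task splits into (i) counting the hedgerows along $\lambda$ to pin down $\Delta_{1,\lambda}I$, and (ii) enumerating the chain-link fences for each vertex $\sigma$ of $K^{\aa}I$ and each edge $\tau$ of $K^{\bb}I$, using the simplified three-variable weights of Remark~\ref{simpler weights}.

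For (i): a hedgerow along the length-one path $\lambda$ consists only of a stake set $S_1^{\bb}\subseteq K_1^{\bb}I$ and a shrubbery $T_0^{\aa}\subseteq K_0^{\aa}I$, with no interior hedges. Because $K^{\bb}I$ is the hollow triangle on $ijk$ it has no $2$-faces, so $\widetilde B_1(K^{\bb}I;\kk)=0$ and the only dimension-$1$ stake set is $S_1^{\bb}=\{\,\}$ (its complement must be a basis of $\widetilde C_1/\widetilde B_1=\widetilde C_1$, i.e. all three edges). Because $K^{\aa}I$ is \casethreeprime\ it is the two vertices $j$ and $k$, so a dimension-$0$ shrubbery is a single vertex whose boundary spans $\widetilde B_{-1}\cong\kk$; thus $T_0^{\aa}\in\{\{j\},\{k\}\}$. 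Hence there are exactly two hedgerows, so $\Delta_{1,\lambda}I=2$ (equivalently $\Delta_{1,\lambda}I=\Delta_1^S K^{\bb}I\cdot\Delta_0^T K^{\aa}I=1\cdot 2$ by Definition~\ref{d:DeltaLambda} and Remark~\ref{delta formula}).

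For (ii): for any edge $\tau$ of $K^{\bb}I$ the hedge rim in $S_1^{\bb}=\{\,\}$ is $r(\tau)=\tau$ since $\widetilde B_1=0$ forces $\tau-r(\tau)=0$, so $\tau$ is boundary-linked only to $\tau_0=\tau$ with weight $1$. As $\lambda$ is one $i$-step, a fence from $\tau$ to $\sigma$ then consists of $\tau_0=\tau$, the containment $\sigma_1=\tau_0-e_i$ (that is, $\tau$ with the vertex $i$ deleted, which must be a vertex of $K^{\aa}I$) of weight $(-1)^{\sigma_1\subset\tau_0}$, and a cycle-link making $\sigma_1$ cycle-linked to $\sigma$ in $T_0^{\aa}$, of weight the coefficient of $\sigma$ in $\zeta_{T_0^{\aa}}(\sigma_1)$. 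For $\tau=jk$ there is no such $\sigma_1$, so $D_{v,jk}^{\aa\bb}=0$ for both vertices $v$ (this is also statement~1 of Theorem~\ref{threevariabletheorem}). For $\tau=ij$ one gets $\sigma_1=j$: the shrubbery $\{j\}$ gives $\zeta_{\{j\}}(j)=0$ and no fence, while $\{k\}$ gives $\zeta_{\{k\}}(j)=j-k$, so $j$ is cycle-linked to $j$ with coefficient $1$ and to $k$ with coefficient $-1$; dividing the two resulting fence weights by $\Delta_{1,\lambda}I=2$ and rewriting the incidence sign via $(-1)^{v,e}=\mathrm{sign}(x_v,\xx^e)$ yields $D_{j,ij}^{\aa\bb}=\tfrac{(-1)^{i,ij}}{2}$ and $D_{k,ij}^{\aa\bb}=\tfrac{(-1)^{j,ij}}{2}$. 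The column $\tau=ik$ follows by the $j\leftrightarrow k$ symmetry: $\sigma_1=k$, only $T_0^{\aa}=\{j\}$ contributes via $\zeta_{\{j\}}(k)=k-j$, giving $D_{k,ik}^{\aa\bb}=\tfrac{(-1)^{i,ik}}{2}$ and $D_{j,ik}^{\aa\bb}=\tfrac{(-1)^{k,ik}}{2}$. Assembling these entries gives the displayed matrix.

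I expect the only real difficulty to be the sign bookkeeping: reconciling the simplicial incidence coefficient $(-1)^{\sigma_1\subset\tau_0}$ and the orientation convention $\zeta_{T_0^{\aa}}(\sigma_1)=\sigma_1-t$ with the notation $(-1)^{v,e}=\mathrm{sign}(x_v,\xx^e)$, bearing in mind that it is the deleted vertex $i$, not $\sigma_1$ itself, that carries the incidence sign. A minor additional point is completeness of the fence count — that $\widetilde B_1(K^{\bb}I)=0$ genuinely forces $\tau_0=\tau$, that $\sigma_1$ is forced to be $\tau_0$ with $i$ removed, and that no longer fences occur because $\lambda$ has length one.
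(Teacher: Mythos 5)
Your proposal is correct and follows essentially the same route as the paper: the unique one-step lattice path forces $S_1^{\bb}=\{\,\}$, the two shrubberies $T_0^{\aa}\in\{\{j\},\{k\}\}$ give $\Delta_{1,\lambda}I=2$, and the fences with initial posts $ij$, $ik$, $jk$ are enumerated exactly as in the paper (which simply cites Lemma~\ref{zerotofourortwo} for the shared initial segment rather than recomputing it). Your sign bookkeeping, including that the deleted vertex $i$ carries the containment sign, reproduces the stated entries.
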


\begin{proof}
The stake set $S_1^{\bb}$ is forced to be the empty set $\{ \, \}$, and $T_0^{\aa}$ can consist of either vertex $j$ or $k$.
The first three faces and corresponding edge weights are the same as in the proof of Lemma~\ref{zerotofourortwo}, since $K^{\bb}I$ is the same and $\lambda$ moves back in the $i$-direction.
All chain-link fences with initial post $i j$ are shown below. 
The terminal posts and corresponding weights follow from the arguments given in the proof of Lemma~\ref{zerotofourortwo} when $T_0^{\aa} = \{ k \}$.
\begin{singlespace}
$$
\begin{array}{*{4}{@{}c@{}}}
\\[-2.2ex]
    &     &     & i j \edgehoriz 1 i j
\\  &   &\edgeup{(-1)^{i,ij}}&
\\
\begin{array}{@{}r@{\ }c@{\ \,}c@{\,}c@{}}
\\[-6.3ex]
  & j\!\!\!
\\[-1.3ex]
  & & \diagdown\hspace{.1ex}\raisebox{1ex}{\tiny\mkl{\hspace{-2ex}\scriptstyle 1}}
\\[-1.6ex]
  &                                                  &          & j
\\[-1.7ex]
  & & \diagup\hspace{.1ex}\raisebox{-.5ex}{\tiny\mkl{\hspace{-2.5ex}\scriptstyle -1}}
\\[-1.5ex]
  & k \!\!\!
\\[-3.6ex]
\end{array}
    &&&\\
    &&&\\
  T_0 = \{ k \} &&&\mko{S_1 = \{ \}}
\\[-.2ex]
\end{array}
$$
\end{singlespace}
The chain-link fences with initial post $ik$ are the same after swapping each $j$ and $k$.
Since the lattice path moves back in the $i$-direction, there are no chain-link fences with initial post $jk$.
\end{proof}

\begin{defn}\label{partial chain-link fence}
Let $\lambda = (\bb_j, \bb_{j-1}, \ldots, \bb_1, \bb)$ be a lattice path.
Given a stake set $S_i^{\bb} \subseteq K^{\bb}I$ and a hedge $ST_i^{\mathbf{c}} \subseteq K^{\mathbf{c}}I$ for each degree $\mathbf{c} = \bb_1, \ldots, \bb_j$, a \textit{partial chain-link fence} is a sequence of faces 
$$%
\begin{array}{*{15}{@{}c@{}}}
\\[-3.2ex]
&&\!\!\tau_{j}\!\!\!&&&&\ \cdots\ &&&&\tau_1&&&&\tau_0\hbox{ --- }\tau\\
&&&\bs\,\,\,&&/&&\bs&&/&&\bs&&/&\\
&&&&\!\!\!\sigma_{j}\!\!&&&&\sigma_2&&&&\sigma_1&&
\\[-.2ex]
\end{array}
$$
in which $\tau_{\ell} \in
K_i^{\bb_{\ell}} I$, $\sigma_{\ell}
\in K_{i-1}^{\bb_{\ell}} I$, and
\begin{itemize}
    \item $\tau$ is boundary-linked to $\tau_0$ in the stake set $S_i^{\bb}$,
    \item $\sigma_\ell \in S_{i-1}^{\bb_\ell}$ is chain-linked to $\tau_\ell$ for all $\ell = 1,\ldots, j$, and
    \item $\sigma_\ell = \tau_{\ell-1} - e_{k_\ell}$ for all $\ell = 1, \ldots, j-1$, where $e_{k_{\ell}} = \bb_{\ell-1} - \bb_{\ell}$.
    \end{itemize}
The partial weight $w_{\phi}$ for a partial chain-link fence is the product of the weights on its edges, as given in Definition~\ref{d:weight}.    
\end{defn}

The definitions for a partial chain-link fence and a chain-link fence agree until degree $\bb_{j}$.
Every chain-link fence contains a partial chain-link fence.

\begin{lemma}\label{zerotoone}
If $\bb$ and $\bb_r := \bb - r \cdot e_i$ are degree vectors such that $K^{\bb}I$ is \casezero, and in the lattice path $\lambda = (\bb_{r}, \bb_{r-1}, \ldots, \bb_1, \bb)$, the Koszul simplicial complexes in degrees $\bb_1, \ldots, \bb_r$ are \caseone, there are $3^r$ partial chain-link fences with initial post $ij$.
The number of these chain-link fences that end with $i j$ is $\lceil \frac{3^r}{2} \rceil$, with partial weight $\lceil \frac{3^r}{2} \rceil$, and the number that end with $ik$ is $\lfloor \frac{3^r}{2} \rfloor$, with partial weight $(-1)^{i,ijk+1} \lfloor \frac{3^r}{2} \rfloor$. 
\end{lemma}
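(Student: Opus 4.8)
The plan is to construct the partial chain-link fences along $\lambda$ one degree at a time and to package their count and total weight as a two-state linear recursion, the two states being the two edges $ij$ and $ik$, the only $1$-faces occurring along $\lambda$. First I would record the hedgerow data forced by the hypotheses: since $K^{\bb}I$ is \casezero it has no $2$-face, so $\widetilde{B}_1(K^{\bb}I;\kk)=0$, which forces $S_1^{\bb}=\{\,\}$ and makes the hedge rim of $ij$ equal to $ij$; hence the initial post $\tau=ij$ is boundary-linked only to $\tau_0=ij$, with weight $c_{ij}(ij,S_1^{\bb})=1$. At each interior degree $\bb_\ell$ the complex $K^{\bb_\ell}I$ is \caseone, whose only $1$-faces are $ij$ and $ik$; since $\widetilde{B}_0(K^{\bb_\ell}I;\kk)$ is two-dimensional the unique shrubbery is $T_1^{\bb_\ell}=\{ij,ik\}$, while the stake set $S_0^{\bb_\ell}$ is the complement of a single vertex, i.e.\ one of $\{j,k\}$, $\{i,k\}$, $\{i,j\}$. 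A hedgerow along $\lambda$ thus amounts to a choice of one of these three stake sets at each of the $r$ interior degrees, which already accounts for the factor $3^r$.

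Next I would compute the one-step transitions. Since $\lambda$ steps back in the $i$-direction, $\sigma_\ell=\tau_{\ell-1}\setminus\{i\}$ is forced, so $\sigma_\ell=j$ when $\tau_{\ell-1}=ij$ and $\sigma_\ell=k$ when $\tau_{\ell-1}=ik$. Solving the two-by-two linear system that determines the shrubs $s(j)$ and $s(k)$ inside $T_1^{\bb_\ell}=\{ij,ik\}$, one computation for each of the three stake sets, yields three outcomes: one stake set omits the required vertex, so the fence dies; one stake set gives a one-term shrub, prolonging the fence along the same edge, where the product of the containment weight $(-1)^{\sigma_\ell\subset\tau_{\ell-1}}$ and the chain-link weight $c_{\sigma_\ell}(\tau_\ell,ST_1^{\bb_\ell})$ is $+1$; and one stake set gives a two-term shrub, prolonging the fence both along the same edge (product $+1$ again) and across to the other edge, where the product is $-(-1)^{j,ij}(-1)^{k,ik}$. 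Using $(-1)^{\sigma\subset\tau}=(-1)^{v,\tau}$ for $\tau\setminus\sigma=\{v\}$ together with the boundary identity $(-1)^{i,ijk}=(-1)^{j,ij}(-1)^{k,ik}$, this ``crossing'' weight equals $(-1)^{i,ijk+1}$, independently of the order of $i,j,k$. Hence each partial fence reaching $\bb_{\ell-1}$ and ending at $ij$ (resp.\ $ik$) spawns exactly three partial fences reaching $\bb_\ell$: two again ending at $ij$ (resp.\ $ik$), each multiplying the partial weight by $1$, and one ending at $ik$ (resp.\ $ij$), multiplying the partial weight by $(-1)^{i,ijk+1}$.

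Finally I would solve the recursion. Letting $a_\ell$ and $b_\ell$ denote the numbers of partial chain-link fences with initial post $ij$ that reach $\bb_\ell$ and end at $ij$, respectively at $ik$, the transitions give $a_\ell=2a_{\ell-1}+b_{\ell-1}$, $b_\ell=a_{\ell-1}+2b_{\ell-1}$, $a_0=1$, $b_0=0$; adding and subtracting, $a_\ell+b_\ell=3^\ell$ and $a_\ell-b_\ell=1$, so $a_r=\lceil 3^r/2\rceil$ and $b_r=\lfloor 3^r/2\rfloor$, which yields the total $3^r$ and both cardinalities. For the partial weights, a fence's weight equals $(-1)^{i,ijk+1}$ raised to its number of crossings; a fence ending at $ij$ makes an even number of crossings and one ending at $ik$ an odd number, so every $ij$-terminal fence has partial weight $1$ and every $ik$-terminal fence has partial weight $(-1)^{i,ijk+1}$. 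Summing over fences, the total partial weight of the $ij$-terminal ones is $\lceil 3^r/2\rceil$ and that of the $ik$-terminal ones is $(-1)^{i,ijk+1}\lfloor 3^r/2\rfloor$, as claimed.

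The step I expect to be the main obstacle is the middle one: correctly reading off the containment signs and the shrub coefficients for all three stake sets and checking the cancellations that force every non-crossing extension to have weight exactly $1$ while pinning the crossing weight to $(-1)^{i,ijk+1}$ for every order of the variables. Once the transition rules are established, the recursion and its consequences are routine.
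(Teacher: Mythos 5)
Your proposal is correct and follows essentially the same route as the paper's proof: forced hedgerow data ($S_1^{\bb}=\{\,\}$, $T_1^{\bb_\ell}=\{ij,ik\}$, three stake sets per interior degree), the same per-step transition analysis showing two same-edge continuations of weight $1$ and one crossing of weight $(-1)^{i,ijk+1}$, and the same two-state recursion $A_{n+1}=2A_n+B_n$, $B_{n+1}=A_n+2B_n$. The only cosmetic differences are that you solve the recursion via the invariants $a_\ell+b_\ell=3^\ell$, $a_\ell-b_\ell=1$ rather than by induction with ceilings and floors, and you obtain the crossing sign from the identity $(-1)^{i,ijk}=(-1)^{j,ij}(-1)^{k,ik}$ instead of the paper's case check over variable orderings.
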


\begin{proof}
The stake set $S_1^{\bb}$ must be the empty set.
For $\bb_{\ell}$, $\ell = 1, \ldots, r$,  the shrubbery $T_1^{\bb_{\ell}} = \{ ij, ik \}$, and $S_0^{\bb_{\ell}}$ can be any pair of vertices, giving three options for each $\bb_{\ell}$.

In degree $\bb_{\ell}$, the stake $j$ is chain-linked to $i j$ if $S_0^{\bb_{\ell}} = \{ j, k \}$ with weight $(-1)^{i,ij}$ since $s(j) = ij$.
If $S_0^{\bb_{\ell}} = \{ j, i \}$, then $j$ is chain-linked to $ij$ and $ik$, since the shrub $s(j)$ is a linear combination of $ij$ and $ik$ whose boundary has coefficient $1$ on $j$ and coefficient $0$ on $i$.
If $S_0^{\bb_{\ell}} = \{ i, k \}$, then $j$ is not a stake and thus is not chain-linked to any face.
In this case the chain-link fence terminates.
Since the Koszul complex is symmetric, the stake $k$ is chain-linked to $i k$ if $S_0^{\bb_{\ell}} = \{ k, j \}$ and to $i k$ and $i j$ if $S_0^{\bb_{\ell}} = \{ k, i \}$.

Therefore, for each chain-link fence where the face $\sigma_{\ell}$ is equal to $j$, there are two chain-link fences where $\tau_{\ell}$ is equal to $ij$ and one where $\tau_{\ell}$ is $ik$.
Similarly, for each chain-link fence with $\sigma_{\ell} = k$, there are two chain-link fences with $\tau_{\ell} = ik$ and one with $\tau_{\ell} = ij$.
Each chain-link fence with $\tau_{\ell} = ij$ gives one chain-link fence, with $\sigma_{\ell+1} = j$, since $\lambda$ moves back in the $i$-direction.
If $\tau_{\ell} = ik$, this gives one chain-link fence with $\sigma_{\ell+1} = k$.

The proof is by induction.
For the base case, note that after starting at initial post $ij$ which is boundary-linked to itself, and moving back in the $i$-direction, there are $2 = \lceil \frac{3^1}{2} \rceil$ chain-link fences with  $\sigma_1 = j$ and $\tau_1 = i j$ as explained above.
There is only $1 = \lfloor \frac{3^1}{2} \rfloor$ chain-link fence with $\sigma_1 = j$ and $\tau_1 = ik$.

Let $A_n$ be the number of chain-link fences with $\tau_n = ij$, and let $B_n$ be the number of chain-link fences with at $\tau_n = ik$.
Assume $A_n = \lceil \frac{3^n}{2} \rceil$ and $B_n = \lfloor \frac{3^n}{2} \rfloor$.
After moving back again in the $i$-direction, there will be $A_n$ chain-link fences with $\sigma_{n+1} = j$ and $B_n$ with $\sigma_{n+1} = k$.
As described above, this produces $2(A_n) + B_n$ chain-link fences with $\tau_{n+1} = ij$ and $A_n + 2(B_n)$ with $\tau_{n+1} = i k$.
Since $3^n$ is odd, $\lceil \frac{3^n}{2} \rceil = \frac{3^n + 1}{2}$ and $\lfloor \frac{3^n}{2} \rfloor = \frac{3^n-1}{2}$.
This gives
\[ A_{n+1} = 2\cdot A_n + B_n
    = 2 \Bigl\lceil \frac{3^n}{2} \Bigr\rceil + \Bigl\lfloor \frac{3^n}{2} \Bigr\rfloor
    = \frac{3^{n+1}+1}{2}
    = \Bigl\lceil \frac{3^{n+1}}{2} \Bigr\rceil.\]
Since the total number of chain-link fences is $3^n$, $B_{n+1} = 3^n - \lceil \frac{3^{n+1}}{2} \rceil = \lfloor \frac{3^{n+1}}{2} \rfloor$.

Next, it is shown that $c_j (ik, \{j,i\}) = (-1)^{i,ij} (-1)^{i,ijk+1}$.
Note that
\begin{align*}
    \partial ((-1)^{i,ij} ((-1)^{i,ijk+1} ik + ij)) &= (-1)^{i,ij} (-1)^{i,ijk+1} ((-1)^{i,ik} k + (-1)^{k,ik} i) \\
    &\,\,\,\, + (-1)^{i,ij} (-1)^{i,ij} j + (-1)^{i,ij} (-1)^{j,ij} i\\
    &= (-1)^{i,ij}(-1)^{i,ijk+1} ((-1)^{i,ik} k + (-1)^{k,ik} i)\\
    &\,\,\,\,  + j - i. 
\end{align*}
If in the order of the variables, $i > j,k$ or $i < j,k$, then $(-1)^{i,ij}(-1)^{k,ik} = -1$ and $(-1)^{i,ijk+1} = -1$.
If $j>i>k$ or $k>i>j$, then $(-1)^{i,ij}(-1)^{k,ik} = 1$ and $(-1)^{i,ijk+1} = 1$.
Therefore $(-1)^{i,ij} (-1)^{i,ijk+1} (-1)^{k,ik} = 1$.
Then
\begin{align*}
    \partial ((-1)^{i,ij} ((-1)^{i,ijk+1} i k + i j)) &= (-1)^{i,ij} (-1)^{i,ijk+1} (-1)^{i,ik} k + j.
\end{align*}
Thus $c_j(ik, \{ j,i\}) =(-1)^{i,ij} (-1)^{i,ijk+1}$.
By symmetry, $c_k (ij, \{ k,i\}) = (-1)^{i,ik} (-1)^{i,ijk+1}$. 

There are four cases for segments $\tau_{\ell}$ --- $\sigma_{\ell}$ --- $\tau_{\ell-1}$ of the chain-link fence:
\begin{singlespace}
$$%
\begin{array}{@{}c@{}c@{}c@{}c@{}c@{}}
i j &&&&i j\\
\,\,{\scriptstyle (-1)^{i,ij}}& \edgedown{} & & \edgeup{(-1)^{i,ij}} & \\
&&j&&
\end{array}
\hspace{1cm}
\begin{array}{@{}c@{}c@{}c@{}c@{}c@{}}
i k &&&&i k\\
\,\,{\scriptstyle (-1)^{i,ik}}& \edgedown{} & & \edgeup{(-1)^{i,ik}} & \\
&&k&&
\end{array}
\hspace{1cm}
\begin{array}{@{}c@{}c@{}c@{}c@{}c@{}}
i k &&&&i j\\
& \edgedown{a} & & \edgeup{(-1)^{i,ij}} & \\
&&j&&
\end{array}
\hspace{1cm}
\begin{array}{@{}c@{}c@{}c@{}c@{}c@{}}
i j &&&&i k\\
& \edgedown{b} & & \edgeup{(-1)^{i,ik}} & \\
&&k&&
\end{array}
$$
\end{singlespace}
where $a = (-1)^{i,ij}(-1)^{i,ijk +1}$ and $b = (-1)^{i,ik}(-1)^{i,ijk+1}$.
The signs in the first two cases cancel.
In the third and fourth cases, the weights cancel to $(-1)^{i,ijk+1}$.

Therefore, for a chain-link fence with initial post $i j$, each time $\tau_{\ell-1}$ and $\tau_{\ell}$ do not agree, the total weight is multiplied by $(-1)^{i,ijk+1}$.
If this occurs an odd number of times, $\tau_r = ik$ and the partial weight is $w_{\phi} = (-1)^{i,ijk+1}$.
If this occurs an even number of times, $\tau_r = ij$ and the partial weight is $w_{\phi} = 1$.
Therefore the sum of the weights of all chain-link fences with $\tau_r = ij$ is $\lceil \frac{3^r}{2} \rceil$, and the sum of the weights of all chain-link fences with $\tau_r = ik$ is $(-1)^{i,ijk+1} \lfloor \frac{3^r}{2} \rfloor$.
\end{proof}

\begin{lemma}\label{zerotoonetothreeprime}
If $\bb$ and $\aa$ are degree vectors such that $K^{\bb}I$ is  \casezero and $K^{\aa}I$ is \casethreeprime, and in the lattice path $\lambda = (\aa, \bb_{r}, \bb_{r-1}, \ldots, \bb_1, \bb)$, all degree vectors $\bb_1, \ldots, \bb_r$ have Koszul complexes \caseone, then the contribution of $\lambda$ to the sylvan matrix is given below.
$$%
\widetilde{H}_0 K^{\aa} \!\otimes\! \langle \xx^{\aa} \rangle
\xleftarrow{
\monomialmatrix
  {j\\ k}
  {\begin{array}{@{}l@{\ }c@{\ }r@{}}
    \quad\, i j &\ \ i k &\, \, j k\ \,
    \\
       \frac{(-1)^{i,ij}}{2}& \frac{(-1)^{k,ik}}{2} & 0
       \\\frac{(-1)^{j,ij}}{2}& \frac{(-1)^{i,ik}}{2}  & 0 
   \end{array}}
  {\\\\}
\!\!\!}
{\widetilde{H}_1 K^{\bb} \!\otimes\! \langle \xx^{\bb} \rangle}
$$
\end{lemma}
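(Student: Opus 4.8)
The plan is to reduce everything to two facts already in hand: Lemma~\ref{zerotoone}, which controls the portion of every chain-link fence that runs through the $r$ Koszul complexes of type \caseone, and Lemma~\ref{zerotothreeprime}, which is literally the $r=0$ instance of the present statement and dictates how a chain-link fence enters the terminal complex $K^{\aa}I$ of type \casethreeprime. Since $\aa\preceq\bb$ with $K^{\bb}I$ of type \casezero and $K^{\aa}I$ of type \casethreeprime forces $\lambda$ to move back in the $i$-direction at every step (Figure~\ref{cases}), there is only one such $\lambda$, so the contribution of $\lambda$ equals the sylvan matrix entry, and we must show it does not depend on $r$.

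First I would pin down the hedgerows along $\lambda$, exactly as in Lemmas~\ref{zerotofourortwo} and \ref{zerotoone}: $S_1^{\bb}$ is forced to be $\{\,\}$ because the only $1$-boundary is $0$; each $T_1^{\bb_\ell}$ is forced to be $\{ij,ik\}$ while $S_0^{\bb_\ell}$ ranges over the three pairs of vertices; and $T_0^{\aa}$ is either $\{j\}$ or $\{k\}$. Hence $\Delta_{1,\lambda}I = 2\cdot 3^r$.

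Next, for initial post $ij$, Lemma~\ref{zerotoone} gives $3^r$ partial chain-link fences, the sum of whose partial weights over those ending at $\tau_r=ij$ is $\lceil 3^r/2\rceil=\tfrac{3^r+1}{2}$ and over those ending at $\tau_r=ik$ is $(-1)^{i,ijk+1}\lfloor 3^r/2\rfloor=(-1)^{i,ijk+1}\tfrac{3^r-1}{2}$. I would then append the final two edges exactly as in Lemma~\ref{zerotothreeprime}: the containment $\tau_r$ to $\sigma_{r+1}=\tau_r\setminus i$ (so $\sigma_{r+1}=j$ with weight $(-1)^{i,ij}$ if $\tau_r=ij$, and $\sigma_{r+1}=k$ with weight $(-1)^{i,ik}$ if $\tau_r=ik$), followed by the cycle-link in $T_0^{\aa}$. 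When $T_0^{\aa}=\{k\}$ only partial chain-link fences with $\tau_r=ij$ survive, since $\zeta_{T_0^{\aa}}(j)=j-k$ while $\zeta_{T_0^{\aa}}(k)=0$, giving terminal post $j$ with cycle-weight $1$ and terminal post $k$ with cycle-weight $-1$; when $T_0^{\aa}=\{j\}$ only those with $\tau_r=ik$ survive, giving terminal post $k$ with cycle-weight $1$ and terminal post $j$ with cycle-weight $-1$. Summing the four resulting contributions and applying the sign identity $(-1)^{i,ij}(-1)^{i,ijk+1}(-1)^{k,ik}=1$ from Lemma~\ref{zerotoone} together with its $j\leftrightarrow k$ mirror $(-1)^{i,ik}(-1)^{i,ijk+1}(-1)^{j,ij}=1$, the $\tfrac{3^r+1}{2}$ and $\tfrac{3^r-1}{2}$ pieces acquire the same sign and add, yielding $\sum_\phi w_\phi=(-1)^{i,ij}\,3^r$ for terminal post $j$ and $(-1)^{j,ij}\,3^r$ for terminal post $k$; division by $\Delta_{1,\lambda}I=2\cdot 3^r$ gives the first column of the asserted matrix. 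The second column follows by the symmetry swapping $j\leftrightarrow k$, and the third column is zero because a chain-link fence with initial post $jk$ would have $\tau_0=jk$ (the hedge rim of $jk$ in $S_1^{\bb}=\{\,\}$ is $jk$) and then require removing $i$ from $jk$ at the first step, which is impossible.

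The routine part is the sign bookkeeping. The one place to be careful is matching each shrubbery choice $T_0^{\aa}\in\{\{j\},\{k\}\}$ with the parity class of partial chain-link fences ($\tau_r=ij$ versus $\tau_r=ik$) that it does not annihilate, and then checking that the surviving $\tfrac{3^r+1}{2}$ and $\tfrac{3^r-1}{2}$ terms carry signs that reinforce to $3^r$ rather than cancel to $1$; this parity-and-sign match is the only real obstacle, and once it is set up correctly everything reduces to direct appeals to Lemmas~\ref{zerotoone} and \ref{zerotothreeprime}.
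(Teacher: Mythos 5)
Your proposal is correct and follows essentially the same route as the paper: it forces the hedgerow choices to get $\Delta_{1,\lambda}I=2\cdot3^r$, feeds the $\lceil 3^r/2\rceil$ and $\lfloor 3^r/2\rfloor$ partial-weight counts from Lemma~\ref{zerotoone} through the final containment and cycle-link exactly as in Lemma~\ref{zerotothreeprime}, and uses the same sign identity (you quote it in the equivalent form $(-1)^{i,ij}(-1)^{i,ijk+1}(-1)^{k,ik}=1$ and its $j\leftrightarrow k$ mirror) to see that the two terms reinforce to $(-1)^{i,ij}3^r$ and $(-1)^{j,ij}3^r$. The remaining columns by the $j\leftrightarrow k$ symmetry and the vanishing of the $jk$ column also match the paper's argument.
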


\begin{proof}
In the set of partial chain-link fences along $\lambda$ from $\aa$ to $\bb_r$ with initial post $ij$, there are $\lceil \frac{3^r}{2} \rceil$ with $\tau_r = ij$ by Lemma \ref{zerotoone}.
After moving back in the $i$-direction, these are linked to $j$ with weight $(-1)^{i,ij}$.
There are $\lfloor \frac{3^r}{2} \rfloor$ partial chain-link fences with $\tau_r = ik$, and these are linked to $k$ with weight $(-1)^{i,ik}$.
The shrubbery $T_0^{\aa}$ can be $\{ j \}$ or $\{ k\}$, meaning $\Delta_{1,\lambda}I = 2 \cdot 3^r$.
If $T_0^{\aa} = \{ k \}$, then $\zeta_{T_0^{\aa}}(j) = j - k$, so $c_j (j, T_0^{\aa}) = 1$ and $c_j (k, T_0^{\aa}) = -1$. 
In this case, $\zeta_{T_0^{\aa}}(k) = 0$.
Similarly, if $T_0^{\aa} = \{ j \}$, then $\zeta_{T_0^{\aa}}(k) = k - j$, so $c_k(k, T_0^{\aa}) = 1$ and $c_k (j, T_0^{\aa}) = -1$. 
Also $\zeta_{T_0^{\aa}} (j) = 0$.
Thus by Lemma~\ref{zerotoone} there are $\lceil \frac{3^r}{2} \rceil$ chain-link fences along $\lambda$ with terminal post $j$ with total weight $w_{\phi} = (-1)^{i,ij}$ and $\lfloor \frac{3^r}{2} \rfloor$ with total weight $w_{\phi} = -(-1)^{i,ik} (-1)^{i,ijk+1}$.

This gives 
\[D_{j, ij}^{\aa \bb} = \frac{1}{2 \cdot 3^r} ( (-1)^{i,ij} \Bigl\lceil \frac{3^r}{2} \Bigr\rceil - (-1)^{i,ik}(-1)^{i,ijk+1} \Bigl\lfloor \frac{3^r}{2} \Bigr\rfloor).\]
Note that $(-1)^{i,ij}(-1)^{i,ik}(-1)^{i,ijk+1} = -1$ (see proof of Lemma \ref{zerotoone}).
Then
\begin{align*}
    \frac{1}{2 \cdot 3^r} \Big( (-1)^{i,ij} \Bigl\lceil \frac{3^r}{2} \Bigr\rceil - (-1)^{i,ik}(-1)^{i,ijk+1} \Bigl\lfloor \frac{3^r}{2} \Bigr\rfloor \Big) &= \frac{(-1)^{i,ij}}{2 \cdot 3^r} \Big( \Bigl\lceil \frac{3^r}{2} \Bigr\rceil + \Bigl\lfloor \frac{3^r}{2} \Bigr\rfloor \Big)\\
    &= \frac{(-1)^{i,ij} 3^r}{2 \cdot 3^r}\\
    &= \frac{(-1)^{i,ij}}{2}.
\end{align*}
Thus $D_{j, ij}^{\aa \bb} = \frac{(-1)^{i,ij}}{2}$.

Similarly, there are $\lceil \frac{3^r}{2} \rceil$ chain-link fences with terminal post $k$ with weight $ w_{\phi} = -(-1)^{i,ij}$ and $\lfloor \frac{3^r}{2} \rfloor$ with weight $w_{\phi} = (-1)^{i,ik}(-1)^{i,ijk+1}$.
This gives $D_{k, ij}^{\aa \bb} = \frac{(-1)^{j,ij}}{2}$.

The other entries follow by symmetry.
\end{proof}

\begin{lemma}\label{zerotoonetofour}
If $\bb$ and $\aa$ are degree vectors such that $K^{\bb}I$ is \casezero and $K^{\aa}I$ is \casetwo or \casefour, and in the lattice path $\lambda = (\aa = \bb_{r+1}, \bb_{r}, \bb_{r-1}, \ldots, \bb_1, \bb)$, $K^{\bb_{\ell}}I$ is \caseone for $\ell = i, \ldots, r$, then the sylvan matrix entries are given below.
$$%
\HH_0 K^{\aa} \!\otimes\! \langle \xx^{\aa} \rangle
\xleftarrow{
\monomialmatrix
  {i\\ j\\k}
  {\begin{array}{@{}l@{\ }c@{\ }r@{}}
    \quad\, i j &\ \ i k &\, \, j k\ \,
    \\
       \frac{(-1)^{j,ij}}{3^{r+1}} & \frac{(-1)^{k,ik}}{3^{r+1}} &0\\
       (-1)^{i,ij} \frac{3^r + \lceil \frac{3^r}{2} \rceil}{3^{r+1}} & (-1)^{k,ik} \frac{3^r + \lfloor \frac{3^r}{2} \rfloor}{3^{r+1}}&0\\
       (-1)^{j,ij} \frac{3^r + \lfloor \frac{3^r}{2} \rfloor}{3^{r+1}} & (-1)^{i,ik} \frac{3^r + \lceil \frac{3^r}{2} \rceil}{3^{r+1}}&0
   \end{array}}
  {\\\\\\}
\!\!\!}
{\HH_1 K^{\bb} \!\otimes\! \langle \xx^{\bb} \rangle}
$$
\end{lemma}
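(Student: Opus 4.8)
The plan is to split each chain-link fence along the (unique) lattice path $\lambda$ into the part lying over the degrees $\bb,\bb_1,\dots,\bb_r$, which is governed by Lemma~\ref{zerotoone}, and the single remaining step into degree $\aa$, which is handled as in the proof of Lemma~\ref{zerotofourortwo}. First I would record the hedgerows along $\lambda$. Since $K^\bb I$ is \casezero, it contains no $2$-faces, so it has no nonzero $1$-boundaries and $S_1^\bb$ is forced to be $\{\,\}$; for $\ell=1,\dots,r$ the complex $K^{\bb_\ell}I$ is the path \caseone, whose only dimension-$1$ shrubbery is $T_1^{\bb_\ell}=\{ij,ik\}$ while $S_0^{\bb_\ell}$ may be any of the three $2$-element subsets of $\{i,j,k\}$; and since $K^\aa I$ is \casetwo\ or \casefour, all of $i,j,k$ are vertices of $K^\aa I$ and the dimension-$0$ shrubbery $T_0^\aa$ may be any single one of them. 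Hence $\lambda$ carries $3^{r+1}$ hedgerows, so $\Delta_{1,\lambda}I=3^{r+1}$, the common denominator in the matrix.

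Next I would dispose of the column $jk$: if the initial post is $jk$ then $\tau_0$ is boundary-linked to $jk$ in $S_1^\bb=\{\,\}$, hence $\tau_0=jk$, and the next face $\sigma_1=\tau_0-e_i$ fails to exist since $i\notin jk$; so $D^{\aa\bb}_{v,jk}=0$ for every vertex $v$. The complexes \casezero, \caseone, \casetwo, \casefour\ are all invariant under exchanging $j$ and $k$, so the column $ik$ is obtained from the column $ij$ by that exchange, and it remains to compute $D^{\aa\bb}_{v,ij}$. Every chain-link fence along $\lambda$ with initial post $ij$ restricts over $\bb,\bb_1,\dots,\bb_r$ to a partial chain-link fence with initial post $ij$ (Definition~\ref{partial chain-link fence}) and is recovered from it by appending the containment $\sigma_{r+1}=\tau_r-e_i\subset\tau_r$ and the cycle-link $\sigma_{r+1}\to v$ in $T_0^\aa$, data that is independent of the choices made over $\bb_1,\dots,\bb_r$. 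By Lemma~\ref{zerotoone} the partial chain-link fences ending at $\tau_r=ij$ have partial weights summing to $\lceil 3^r/2\rceil$, so there $\sigma_{r+1}=j$ and the containment weight is $(-1)^{i,ij}$; those ending at $\tau_r=ik$ have partial weights summing to $(-1)^{i,ijk+1}\lfloor 3^r/2\rfloor$, so there $\sigma_{r+1}=k$ and the containment weight is $(-1)^{i,ik}$.

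For the final step, a dimension-$0$ shrubbery is $T_0^\aa=\{u\}$ with $\zeta_{\{u\}}(x)=x-u$ for $x\neq u$ and $\zeta_{\{u\}}(u)=0$; summing the cycle-link weight $c_{\sigma_{r+1}}(v,\{u\})$ over the three vertices $u$ therefore yields an integer $\Sigma_j(v)$ (when $\sigma_{r+1}=j$) equal to $-1,2,-1$ for $v=i,j,k$, and an integer $\Sigma_k(v)$ (when $\sigma_{r+1}=k$) equal to $-1,-1,2$ for $v=i,j,k$. Multiplying each partial weight above by its containment weight and by the relevant $\Sigma_j(v)$ or $\Sigma_k(v)$, and dividing by $\Delta_{1,\lambda}I=3^{r+1}$, gives
\[
D^{\aa\bb}_{v,ij}=\frac{1}{3^{r+1}}\Bigl((-1)^{i,ij}\Bigl\lceil\tfrac{3^r}{2}\Bigr\rceil\,\Sigma_j(v)+(-1)^{i,ik}(-1)^{i,ijk+1}\Bigl\lfloor\tfrac{3^r}{2}\Bigr\rfloor\,\Sigma_k(v)\Bigr).
\]
Using $(-1)^{i,ij}(-1)^{i,ik}(-1)^{i,ijk+1}=-1$ (established inside Lemma~\ref{zerotoone}), together with $\lceil 3^r/2\rceil+\lfloor 3^r/2\rfloor=3^r$ and $\lceil 3^r/2\rceil-\lfloor 3^r/2\rfloor=1$, the instances $v=i,j,k$ collapse to $(-1)^{j,ij}/3^{r+1}$, $(-1)^{i,ij}(3^r+\lceil 3^r/2\rceil)/3^{r+1}$ and $(-1)^{j,ij}(3^r+\lfloor 3^r/2\rfloor)/3^{r+1}$, which is the first column of the asserted matrix; the second column follows from the $j\leftrightarrow k$ symmetry and the third was treated above.

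The step I expect to be the main obstacle is the sign bookkeeping. One must be certain that the partial weight supplied by Lemma~\ref{zerotoone} already incorporates the chain-link into $\tau_r$ and every incidence sign through degree $\bb_r$, so that only the last containment and the last cycle-link remain to be multiplied in; and one must keep the three Koszul signs $(-1)^{i,ij}$, $(-1)^{i,ik}$, $(-1)^{i,ijk+1}$ and their single relation straight all the way through the simplification. The rest is the finite check over the three choices of $T_0^\aa$ in the third step and elementary arithmetic with $\lfloor 3^r/2\rfloor$ and $\lceil 3^r/2\rceil$.
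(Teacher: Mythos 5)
Your proposal is correct and follows essentially the same route as the paper's proof: split each fence into the partial chain-link fence over $\bb,\bb_1,\dots,\bb_r$ counted by Lemma~\ref{zerotoone}, append the one containment into degree $\aa$ and the cycle-link for each of the three choices of $T_0^{\aa}$, and divide by $\Delta_{1,\lambda}I=3^{r+1}$, using the sign relation $(-1)^{i,ij}(-1)^{i,ik}(-1)^{i,ijk+1}=-1$ to collapse the sums. Your bookkeeping via $\Sigma_j(v),\Sigma_k(v)$ and the identities $2\lceil 3^r/2\rceil+\lfloor 3^r/2\rfloor=3^r+\lceil 3^r/2\rceil$, $\lceil 3^r/2\rceil+2\lfloor 3^r/2\rfloor=3^r+\lfloor 3^r/2\rfloor$ reproduces exactly the entries the paper obtains.
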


\begin{proof}
To get from \caseone to \casetwo or \casefour, the chain-link fence must move back in the $i$-direction.
A partial chain-link fence with $\tau_r = ij$ will have $\sigma_{r+1} = j$.
There are three options for $T_0^{\aa}$: $\{ i \}$, $\{j\}$, and $\{k \}$.
If $T_0^{\aa} = \{ i \}$, then $\zeta_{T_0^{\aa}}(j) = j-i$, so $c_j(j, T_0^{\aa}) = 1$ and $c_j (i, T_0^{\aa}) = -1$.
If $T_0^{\aa} = \{ k \}$, then $\zeta_{T_0^{\aa}}(j) = j - k$, so $c_j(j, T_0^{\aa}) = 1$ and $c_j (k, T_0^{\aa}) = -1$.
If $T_0^{\aa} = \{ j \}$, then $\zeta_{T_0^{\aa}}(j) = 0$, so $j$ is not cycle-linked to any vertex.
By symmetry, these links and weights apply to chain-link fences with $\tau_r = ik$, with every $j$ and $k$ swapped.

This gives, in total, $\lceil \frac{3^r}{2} \rceil$ chain-link fences with terminal post $i$ and weight $ -(-1)^{i,ij}$ and $\lfloor \frac{3^r}{2} \rfloor$ with weight $ -(-1)^{i,ik} (-1)^{i,ijk+1} = (-1)^{i,ij}$, giving a total weight of $w_{\phi} = -(-1)^{i,ij}(\lceil \frac{3^r}{2} \rceil - \lfloor \frac{3^r}{2} \rfloor) = 1(-1)^{i,ij} = (-1)^{j,ij}$.
Similarly, there are $2 \cdot \lceil \frac{3^r}{2} \rceil$ with terminal post $j$ with weight $(-1)^{i,ij}$ and $\lfloor \frac{3^r}{2} \rfloor$ with weight $-(-1)^{i,ik}(-1)^{i,ijk+1} = (-1)^{i,ij}$, giving a total weight of $w_{\phi} = (-1)^{i,ij}( 2 \cdot \lceil \frac{3^r}{2} \rceil +  \lfloor \frac{3^r}{2} \rfloor) = (-1)^{i,ij} (3^r + \lceil \frac{3^r}{2} \rceil)$.
Finally, there are $\lceil \frac{3^r}{2} \rceil$ with terminal post $k$ with weight $-(-1)^{i,ij}$ and $2 \cdot \lfloor \frac{3^r}{2} \rfloor$ with weight $(-1)^{i,ik}(-1)^{i,ijk+1} = (-1)^{j,ij}$, giving a total weight of $w_{\phi} = -(-1)^{i,ij}(\lceil \frac{3^r}{2} \rceil + 2 \cdot \lfloor \frac{3^r}{2} \rfloor) = (-1)^{j,ij}(3^r + \lfloor \frac{3^r}{2} \rfloor)$.

There are $3$ options for $T_0^{\aa}$, meaning $\Delta_{1,\lambda}I = 3^{r+1}$.
\end{proof}

\subsection{Sylvan matrix entries $D_{ve}^{\aa \bb, \lambda}$ along a single, non-unique lattice path}

Often there are multiple lattice paths between degree vectors $\bb$ and $\aa$.
The lemmas in this section give the contributions $D_{ve}^{\aa \bb, \lambda}$ along the lattice path $\lambda$, and the sylvan matrix entry $D_{ve}^{\aa \bb}$ is the sum over all lattice paths $\lambda$ from $\bb$ to $\aa$ of the contributions $D_{ve}^{\aa \bb, \lambda}$.

\begin{lemma}\label{zerototwotothree}
Suppose $\bb$, $\bb_1 := \bb-e_i$, and $\aa := \bb_1 - e_i$ or $\aa:= \bb_1 - e_j$ are degree vectors such that $K^{\bb}I$ is  \casezero, $K^{\bb_1}I$ is \casetwo, and $K^{\aa}I$ is \casethree.
Then the contribution to the sylvan matrix from $\lambda = (\aa, \bb_1, \bb)$ is given below.
$$%
\HH_0 K^{\aa} \!\otimes\! \langle \xx^{\aa} \rangle
\xleftarrow{
\monomialmatrix
  {i\\ j}
  {\begin{array}{@{}l@{\ }c@{\ }r@{\ }}
    \quad\, i j &\ \ i k &\, \, j k\ \,
    \\
       \frac{(-1)^{j,ij}}{4} & \,\, 0 & 0
       \\ \frac{(-1)^{i,ij}}{4} & \,\, 0 & 0 
   \end{array}}
  {\\\\}
\!\!\!}
{\HH_1 K^{\bb} \!\otimes\! \langle \xx^{\bb} \rangle}
$$
\end{lemma}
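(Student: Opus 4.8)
The plan is to compute the lattice-path contribution supplied by Theorem~\ref{canonicalcombinatorialdescrip} directly, namely $D_{ve}^{\aa\bb,\lambda}=\tfrac{1}{\Delta_{1,\lambda}I}\sum_{\phi\in\Phi_{ve}(\lambda)}w_\phi$, on the length-two path $\lambda=(\aa,\bb_1,\bb)$, where $\bb-\bb_1=e_i$ and $\bb_1-\aa\in\{e_i,e_j\}$. The first step is to enumerate the hedgerows along $\lambda$. Since $K^{\bb}I$ is \casezero it has no $2$-faces, so $\widetilde B_1(K^{\bb}I;\kk)=0$ and the only stake set in dimension $1$ is $S_1^{\bb}=\nothing$; since $K^{\bb_1}I$ is \casetwo its unique $1$-face is $ij$, so $T_1^{\bb_1}=\{ij\}$ is its only shrubbery, and because the class of $ij$ identifies $i$ and $j$ in $\widetilde C_0/\widetilde B_0$ the complement $\{i,j\}$ cannot be a basis, leaving exactly the stake sets $S_0^{\bb_1}\in\{\{i\},\{j\}\}$; since $K^{\aa}I$ is \casethree (vertices $i$ and $j$), its shrubberies in dimension $0$ are $T_0^{\aa}\in\{\{i\},\{j\}\}$. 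So there are $1\cdot2\cdot2=4$ hedgerows, whence $\Delta_{1,\lambda}I=4$ by Definition~\ref{d:DeltaLambda} and Remark~\ref{capital Delta formula}; this is the denominator appearing in the statement, and by Remark~\ref{simpler weights} all the edge weights may be taken in the simplified ($\delta^2=1$) form.

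Next I would check that the columns $e=ik$ and $e=jk$ vanish. Every chain-link fence opens with a boundary-link from $\tau$ to some $\tau_0$; as $S_1^{\bb}=\nothing$ and $\widetilde B_1(K^{\bb}I)=0$, the hedge rim of a $1$-face is that face itself, so $\tau_0=\tau$ with weight $1$, and the first containment then forces $\sigma_1=\tau_0-e_i$. For $\tau=jk$ this is not a face, so $\Phi_{v,jk}(\lambda)=\nothing$; for $\tau=ik$ it is the vertex $k$, which is never a stake of $K^{\bb_1}I$, so the chain-link step is impossible and $\Phi_{v,ik}(\lambda)=\nothing$. Hence $D_{v,ik}^{\aa\bb,\lambda}=D_{v,jk}^{\aa\bb,\lambda}=0$ for every vertex $v$.

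For $\tau=ij$ I would trace the fences step by step: one gets $\tau_0=ij$ (weight $1$), then $\sigma_1=ij-e_i=j$ (containment weight $(-1)^{i,ij}$), which forces $S_0^{\bb_1}=\{j\}$; in the hedge $(\{j\},\{ij\})$ the shrub of $j$ is $(-1)^{i,ij}\,ij$, so $j$ is chain-linked to $\tau_1=ij$ with weight $(-1)^{i,ij}$; and $\sigma_2=\tau_1-(\bb_1-\aa)$ equals $j$ when $\aa=\bb_1-e_i$ and equals $i$ when $\aa=\bb_1-e_j$, with second containment weight $(-1)^{i,ij}$ or $(-1)^{j,ij}$ respectively. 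The circuit $\zeta_{T_0^{\aa}}(\sigma_2)$ is $0$ when $T_0^{\aa}=\{\sigma_2\}$ and equals $\sigma_2$ minus the other vertex of $K^{\aa}I$ otherwise, so precisely one of the two shrubberies $T_0^{\aa}$ gives a nonterminating fence, and that hedgerow produces one fence ending at $\sigma=i$ and one ending at $\sigma=j$. Multiplying the five edge weights and using $\bigl((-1)^{i,ij}\bigr)^2=1$ and $(-1)^{j,ij}=-(-1)^{i,ij}$, the fence to $\sigma=j$ has total weight $(-1)^{i,ij}$ and the fence to $\sigma=i$ has total weight $(-1)^{j,ij}$, and the computation yields the same two weights whether $\aa=\bb_1-e_i$ or $\aa=\bb_1-e_j$. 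Dividing by $\Delta_{1,\lambda}I=4$ gives $D_{j,ij}^{\aa\bb,\lambda}=\tfrac{(-1)^{i,ij}}{4}$ and $D_{i,ij}^{\aa\bb,\lambda}=\tfrac{(-1)^{j,ij}}{4}$, which is the claimed matrix.

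The one delicate point is the bookkeeping of the orientation signs $(-1)^{v,F}$ through the shrub and circuit computations, together with verifying that both admissible directions of $\bb_1-\aa$ produce the identical sylvan matrix; everything else is a short finite enumeration completely forced by the case hypotheses and by $\lambda$ having length two.
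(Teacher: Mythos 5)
Your proposal is correct and follows essentially the same route as the paper's proof: enumerate the hedgerows ($S_1^{\bb}=\{\,\}$, $T_1^{\bb_1}=\{ij\}$ with $S_0^{\bb_1}\in\{\{i\},\{j\}\}$, $T_0^{\aa}\in\{\{i\},\{j\}\}$, giving $\Delta_{1,\lambda}I=4$), kill the $ik$ and $jk$ columns because $\sigma_1$ would have to be $k$ (not a stake) or does not exist, and trace the unique surviving fences from initial post $ij$ in both cases $\bb_1-\aa=e_i,e_j$, with the same sign bookkeeping the paper records in its fence diagrams. No gaps.
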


\begin{proof}
Since the only $1$-boundary in $K^{\bb}I$ is $0$, $S_1^{\bb}$ must be the empty set.
Since there is only one edge in $K^{\bb_1}I$, the shrubbery $T_1^{\bb_1}$ must be $\{ ij \}$, and $S_0^{\bb_1}$ can be either $\{j \}$ or $\{i\}$.
The shrubbery $T_0^{\aa}$ can be $\{ i\}$ or $\{j\}$.
Therefore $\Delta_{1,\lambda}I = 4$.
All chain-link fences for the case $\aa = \bb_1 - e_i$ with initial post $i j$ are given below.
\begin{singlespace}
$$%
\begin{array}{*{7}{@{}c@{}}}
\\[-2.2ex]
    &         & ij&           &   &         & ij \edgehoriz 1 ij
\\  &\edgeup{} &  &\edgedown{} &   &\edgeup{} &
\\
\begin{array}{@{}r@{\ }c@{\ \,}c@{\,}c@{}}
\\[-6.3ex]
  & j\!\!\!
\\[-1.3ex]
  & & \diagdown\hspace{.1ex}\raisebox{1ex}{\tiny\mkl{\hspace{-2ex}\scriptstyle 1}}
\\[-1.6ex]
  &                                                  &          & j
\\[-1.7ex]
  & & \diagup\hspace{.1ex}\raisebox{-.5ex}{\tiny\mkl{\hspace{-2.5ex}\scriptstyle -1}}
\\[-1.5ex]
  & i\!\!\!
\\[-3.6ex]
\end{array}
  &           &  &             & j &         &
\\[1ex]
\\[1ex]
  T_0 = \{ i \} & &&&&\mko{S_0 = \{j\}}
\\[-.2ex]
\end{array}
$$
\end{singlespace}
The unlabeled edges have weight $(-1)^{i,ij}$.

All chain-link fences for $\aa = \bb_1 - e_j$ with initial post $ij$ are given below:
\begin{singlespace}
$$%
\begin{array}{*{7}{@{}c@{}}}
\\[-2.2ex]
    &         &ij&           &   &         &ij \edgehoriz 1 ij
\\  &\edgeup{c}\,\,\,\, &  &\edgedown{} &   &\edgeup{} &
\\
\begin{array}{@{}r@{\ }c@{\ \,}c@{\,}c@{}}
\\[-6.3ex]
  & i\!\!\!
\\[-1.3ex]
  & & \diagdown\hspace{.1ex}\raisebox{1ex}{\tiny\mkl{\hspace{-2ex}\scriptstyle 1}}
\\[-1.6ex]
  &                                                  &          & i
\\[-1.7ex]
  & & \diagup\hspace{.1ex}\raisebox{-.5ex}{\tiny\mkl{\hspace{-2.5ex}\scriptstyle -1}}
\\[-1.5ex]
  & j\!\!\!
\\[-3.6ex]
\end{array}
  &           &  &             & j &         &
\\[1ex]
\\[1ex]
  T_0 = \{ j \} & &&&&\mko{S_0 = \{j\}}
\\[-.2ex]
\end{array}
$$
\end{singlespace}
where $c = (-1)^{j,ij}$, and the unlabeled edges have weight $(-1)^{i,ij}$.

Note that in either case, a chain-link fence with initial post $ik$ will terminate, since after moving back in the $i$-direction, $\sigma_1 = k$, which is not a vertex in any stake set $S_0^{\bb_1}$.
\end{proof}

\begin{lemma}\label{zerototwotofivetothree}
Suppose $\bb$ and $\aa$ are degree vectors such that $K^{\bb}I$ is \casezero, $K^{\aa}I$ is \casethree, and a lattice path $\lambda$ from $\bb$ to $\aa$ passes through \casetwo and then \casefive $r$ times.
Then the contribution to the sylvan matrix along the lattice path is given below.
$$%
\HH_0 K^{\aa} \!\otimes\! \langle \xx^{\aa} \rangle
\xleftarrow{
\monomialmatrix
  {i\\ j}
  {\begin{array}{@{}l@{\ }c@{\ }r@{}}
    \quad\, i j &\ \ i k &\, \, j k\ \,
    \\
       \frac{(-1)^{j,ij}}{2^{r+2}}  & 0 & 0
       \\\frac{(-1)^{i,ij}}{2^{r+2}}& 0 & 0 
   \end{array}}
  {\\\\}
\!\!\!}
{\HH_1 K^{\bb} \!\otimes\! \langle \xx^{\bb} \rangle}
$$
\end{lemma}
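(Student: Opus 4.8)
Following the pattern of Lemma~\ref{zerototwotothree} (which is the case $r=0$), the plan is to apply the combinatorial formula of Theorem~\ref{canonicalcombinatorialdescrip}, using the simplified edge weights of Remark~\ref{simpler weights}: first compute $\Delta_{1,\lambda}I$ by counting hedgerows along $\lambda$, then enumerate the chain-link fences for each matrix position. For the hedgerow count, the stake set $S_1^{\bb}$ is forced to be $\{\,\}$ because the only $1$-boundary of $K^{\bb}I$ is $0$; at the \casetwo degree $\bb_1$ and at each of the $r$ degrees where the Koszul complex is \casefive, the only shrubbery is $\{ij\}$ while the stake set can be $\{i\}$ or $\{j\}$; and at $\aa$, where $K^{\aa}I$ is \casethree, the shrubbery $T_0^{\aa}$ can be $\{i\}$ or $\{j\}$. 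By Remark~\ref{capital Delta formula} this gives $\Delta_{1,\lambda}I = 2\cdot 2^{r}\cdot 2 = 2^{r+2}$, the denominator in the claim.

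To kill the columns $ik$ and $jk$: since $\widetilde B_1(K^{\bb}I)=0$, every chain-link fence has $\tau_0=\tau$, and as $\lambda$ first moves back in the $i$-direction, $\sigma_1=\tau_0-e_i$. For $\tau=jk$ this is not a face, so the fence terminates; for $\tau=ik$ it equals $k$, which is never a stake of a \casetwo complex, so again the fence terminates. Hence $D_{v,ik}^{\aa\bb,\lambda}=D_{v,jk}^{\aa\bb,\lambda}=0$ for every vertex $v$. For the column $ij$, the decisive point is that the unique $1$-face of every interior Koszul complex is $ij$, so each $\tau_\ell$ equals $ij$, the fence carries no branching, the stake set at each interior degree is forced by the direction of the entering step, and there is exactly one partial chain-link fence. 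I would then check that each interior degree contributes partial weight $1$: an entering step in the $i$-direction gives $\sigma_\ell=j$, the containment $ij\supset j$ has weight $(-1)^{i,ij}$, and the shrub $s(j)=(-1)^{i,ij}\,ij$ gives chain-link weight $(-1)^{i,ij}$, whose product is $1$; the $j$-direction case is symmetric. With the boundary-link weight $1$, the partial weight through the last \casefive degree is $1$.

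Finally, the step into $\aa$: if it is in the $i$-direction then $\sigma_{r+2}=j$, which forces $T_0^{\aa}=\{i\}$ for a surviving fence, and $\zeta_{\{i\}}(j)=j-i$ splits it into two chain-link fences — terminal post $j$ with total weight $1\cdot(-1)^{i,ij}\cdot 1=(-1)^{i,ij}$, and terminal post $i$ with total weight $1\cdot(-1)^{i,ij}\cdot(-1)=(-1)^{j,ij}$; a last step in the $j$-direction is symmetric and produces the same two totals. Dividing by $\Delta_{1,\lambda}I=2^{r+2}$ yields $D_{i,ij}^{\aa\bb,\lambda}=(-1)^{j,ij}/2^{r+2}$ and $D_{j,ij}^{\aa\bb,\lambda}=(-1)^{i,ij}/2^{r+2}$, as stated. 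I expect the main obstacle to be the sign bookkeeping in the telescoping step — verifying that, whatever the sequence of $i$- and $j$-moves through the \casetwo/\casefive region, the containment weight and the chain-link weight cancel to $1$ at every interior degree, so that the $r$ extra \casefive degrees scale $\Delta_{1,\lambda}I$ by $2^{r}$ while leaving the numerators untouched.
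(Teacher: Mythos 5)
Your proposal is correct and follows essentially the same route as the paper's proof: compute $\Delta_{1,\lambda}I = 2^{r+2}$ from the forced choices $S_1^{\bb}=\{\,\}$, $T_1=\{ij\}$ with $S_0\in\{\{i\},\{j\}\}$ at the \casetwo\ and \casefive\ degrees, and $T_0^{\aa}\in\{\{i\},\{j\}\}$; observe that $\tau_\ell=ij$ is forced so the containment and chain-link weights cancel at each interior degree; and split at $\aa$ via $\zeta_{T_0^{\aa}}$ into the two terminal posts with weights $(-1)^{i,ij}$ and $(-1)^{j,ij}$. Your explicit verification that the $ik$ and $jk$ columns vanish (the fence terminates because $k$ is never a stake of \casetwo\ and $jk-e_i$ is not a face) is left implicit in the paper but is consistent with its treatment in the neighboring lemmas.
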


\begin{proof}
The choices of $ST_1$ for \casetwo and \casefive are the same, as $T_1$ must be $\{ ij \}$, and $S_0$ must consist of a single vertex with nonzero coefficient in the boundary of the edge $ij$, either $\{i\}$ or $\{ j\}$.
In \casethree, $T_0$ can be either $\{i \}$ or $\{ j \}$.
Thus $\Delta_{1,\lambda} = 2^{r+2}$, where $r$ is the number of lattice points $\bb_{\ell}$ such that $K^{\bb_{\ell}}I$ is \casefive.

Starting with initial post $ij$, $\tau_{\ell}$ can only be the edge $ij$ for any $\ell$, since it is the only edge that appears in $K^{\bb_{\ell}}I$ for $\bb_{\ell} \neq \bb$.
The faces $\sigma_{\ell}$ for $\bb_{\ell} \neq \mathbf{a}$ can be $i$ or $j$, depending on which direction the lattice path moves back in.
The weight of the link between $ij$ and the vertex $v$ is equal to $c_ij (v, ST_1^{\bb_{\ell}})$, as both are the coefficient of $v$ in $\partial(ij)$.
Therefore the signs corresponding to these two weights will cancel.
If $\bb_{r+1} - \aa = e_i$, then the weight of the link between $ij$ and $j$ is $(-1)^{i,ij}$.
If $T_0^{\aa} = \{ i\}$, then $\zeta_{T_0^{\aa}}(j) = j-i$.
See the chain-link fences below.
If $T_0^{\aa} = \{ j \}$, then $\zeta_{T_0^{\aa}}(j) = 0$, so the chain-link fence terminates. 
Thus $w_{\phi} = (-1)^{i,ij}$ when the terminal post is $j$ and $w_{\phi} = -(-1)^{i,ij} = (-1)^{j,ij}$ when the terminal post is $i$.
When $\bb_{r+1} - \aa = e_j$, swap the $i$ and $j$.
\begin{singlespace}
$$%
\begin{array}{*{7}{@{}c@{}}}
\\[-2.2ex]
    &         &ij&           &   &         &ij \edgehoriz 1 ij
\\  &\edgeup{} &  &\edgedown{} &   &\edgeup{} &
\\
\begin{array}{@{}r@{\ }c@{\ \,}c@{\,}c@{}}
\\[-6.3ex]
  & j\!\!\!
\\[-1.3ex]
  & & \diagdown\hspace{.1ex}\raisebox{1ex}{\tiny\mkl{\hspace{-2ex}\scriptstyle 1}}
\\[-1.6ex]
  &                                                  &          & j
\\[-1.7ex]
  & & \diagup\hspace{.1ex}\raisebox{-.5ex}{\tiny\mkl{\hspace{-2.5ex}\scriptstyle -1}}
\\[-1.5ex]
  & i\!\!\!
\\[-3.6ex]
\end{array}
  &           &  &             & j &         &
\\[1ex]
\\[1ex]
  T_0 = \{ i \} & &&&&\mko{S_0 = \{j\}}
\\[-.2ex]
\end{array}
$$
\end{singlespace}
\end{proof}

\begin{lemma}\label{zerotoonetothree}
If $\bb$ and $\aa$ are degree vectors such that $K^{\bb}I$ is \casezero and $K^{\aa}I$ is \casethree, and in the lattice path $\lambda = (\aa, \bb_{r}, \bb_{r-1}, \ldots, \bb_1, \bb)$, all degree vectors $\bb_1, \ldots, \bb_r$ have Koszul complexes \caseone, then the contribution of $\lambda$ to the sylvan matrix is given below.
$$%
\HH_0 K^{\aa} \!\otimes\! \langle \xx^{\aa} \rangle
\xleftarrow{
\monomialmatrix
  {i\\ j}
  {\begin{array}{@{}l@{\ }c@{\ }r@{}}
    \quad\, ij &\ \ ik &\, \, jk\ \,
    \\
       \frac{(-1)^{j,ij}}{2 \cdot 3^r} \lceil \frac{3^r}{2} \rceil & \frac{(-1)^{i,ik}}{2 \cdot 3^r} \lfloor \frac{3^r}{2} \rfloor  & 0
       \\\frac{(-1)^{i,ij}}{2 \cdot 3^r} \lceil \frac{3^r}{2} \rceil & \frac{(-1)^{k,ik}}{2 \cdot 3^r} \lfloor \frac{3^r}{2} \rfloor  & 0 
   \end{array}}
  {\\\\}
\!\!\!}
{\HH_1 K^{\bb} \!\otimes\! \langle \xx^{\bb} \rangle}
$$
\end{lemma}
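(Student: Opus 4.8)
The plan is to split every chain-link fence along $\lambda$ at the degree vector $\bb_r$: the portion from $\bb$ through $\bb_1,\dots,\bb_r$ is a partial chain-link fence in the sense of Definition~\ref{partial chain-link fence}, to which Lemma~\ref{zerotoone} applies verbatim, and the only data left to analyze is the final step $\bb_r\to\aa$ together with the shrubbery $T_0^\aa$. First I would record the hedgerow count: since the only $1$-boundary of $K^\bb I$ is $0$, the stake set $S_1^\bb$ is forced to be $\{\,\}$; for each $\bb_\ell$ with $K^{\bb_\ell}I$ equal to \caseone\ the shrubbery $T_1^{\bb_\ell}=\{ij,ik\}$ is forced and $S_0^{\bb_\ell}$ may be any pair of the three vertices, giving $3$ choices; and $T_0^\aa$ may be $\{i\}$ or $\{j\}$. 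By Remark~\ref{capital Delta formula} this gives $\Delta_{1,\lambda}I=2\cdot 3^r$, the denominator in the claimed matrix.

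Next I would dispose of the column $jk$: the first step of $\lambda$ moves back in the $i$-direction, so any fence with initial post $jk$ would require $\sigma_1=jk\setminus i$, which is not a face, and the fence terminates. For the columns $ij$ and $ik$, the condition $S_1^\bb=\{\,\}$ forces $\tau_0=\tau$ via the boundary-link (weight $1$, since $r(\tau)=\tau$), and from $\bb$ through $\bb_r$ the fence is exactly the partial chain-link fence studied in Lemma~\ref{zerotoone} (for initial post $ij$) and its $j\leftrightarrow k$ mirror (for initial post $ik$). Thus there are $3^r$ such partial fences, of which $\lceil 3^r/2\rceil$ end at the ``straight'' edge ($\tau_r=ij$, resp.\ $\tau_r=ik$) with total partial weight $\lceil 3^r/2\rceil$, and $\lfloor 3^r/2\rfloor$ end at the ``crossed'' edge with total partial weight $(-1)^{i,ijk+1}\lfloor 3^r/2\rfloor$.

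The structural observation that finishes the count is that the last step $\bb_r\to\aa$ moves back in the $j$-direction, since this is the only way \caseone\ can be followed by \casethree\ in Figure~\ref{cases}. Hence $\sigma_{r+1}=\tau_r\setminus j$ is a face only when $\tau_r=ij$, so exactly the $\lceil 3^r/2\rceil$ partial fences survive for the column $ij$ and exactly the $\lfloor 3^r/2\rfloor$ partial fences survive for the column $ik$; in both cases $\sigma_{r+1}=i$ and the containment $\sigma_{r+1}\subset ij$ contributes the incidence weight $(-1)^{j,ij}$. Finally I would read off the cycle-link into $T_0^\aa$: for $T_0^\aa=\{i\}$ one has $\zeta_{T_0^\aa}(i)=0$ and the fence terminates, while for $T_0^\aa=\{j\}$ one has $\zeta_{T_0^\aa}(i)=i-j$, so $i$ is cycle-linked to $i$ with weight $1$ and to $j$ with weight $-1$. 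Assembling the surviving fences, multiplying edge weights, and dividing by $\Delta_{1,\lambda}I=2\cdot 3^r$ produces the matrix, after rewriting $(-1)^{i,ijk+1}(-1)^{j,ij}$ as $(-1)^{i,ik}$ and using $(-1)^{j,ij}=-(-1)^{i,ij}$ and $(-1)^{k,ik}=-(-1)^{i,ik}$, which are exactly the sign identities established inside the proof of Lemma~\ref{zerotoone}.

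The main obstacle I anticipate is this sign bookkeeping: one must simultaneously track the incidence numbers $(-1)^{\sigma\subset\tau}$ on the containment edges, the circuit and shrub coefficients on the cycle- and chain-links, and the global factor $(-1)^{i,ijk+1}$ that Lemma~\ref{zerotoone} shows accumulates each time $\tau_\ell$ switches between $ij$ and $ik$ inside the \caseone\ stretch, and verify that in each entry these collapse to a single $(-1)^{v,e}$. The rest is a finite matching of the $2\cdot 3^r$ hedgerows against the surviving chain-link fences, entirely parallel to Lemmas~\ref{zerotofourortwo} and~\ref{zerototwotothree}.
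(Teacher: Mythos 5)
Your proposal follows the paper's own proof almost step for step: the same splitting of each fence at $\bb_r$ into a partial chain-link fence governed by Lemma~\ref{zerotoone} plus one final containment and cycle-link, the same hedgerow count $\Delta_{1,\lambda}I=2\cdot 3^r$, the same observation (via Figure~\ref{cases}) that $\bb_r-\aa=e_j$ so only the partial fences with $\tau_r=ij$ survive, the same containment weight $(-1)^{j,ij}$ for $i\subset ij$, the same dichotomy $T_0^{\aa}=\{i\}$ (terminates) versus $T_0^{\aa}=\{j\}$ (cycle-link $+1$ to $i$, $-1$ to $j$), and the same dismissal of the $jk$ column. Structurally there is nothing to add.

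The one point that needs attention is the sign bookkeeping in the $ik$ column, which you yourself flag as the delicate step but then wave through. You rewrite $(-1)^{i,ijk+1}(-1)^{j,ij}$ as $(-1)^{i,ik}$; this is indeed what the identity $(-1)^{i,ij}(-1)^{i,ijk+1}(-1)^{k,ik}=1$ from Lemma~\ref{zerotoone} forces, since $(-1)^{j,ij}=-(-1)^{i,ij}$ and $(-1)^{i,ik}=-(-1)^{k,ik}$. But pushing that through your own pipeline (partial weight $(-1)^{i,ijk+1}$ for the $\lfloor \frac{3^r}{2}\rfloor$ surviving fences with initial post $ik$, times the containment weight $(-1)^{j,ij}$, times cycle-link $+1$ or $-1$) gives $D_{i,ik}^{\aa\bb,\lambda}\propto(-1)^{i,ik}$ and $D_{j,ik}^{\aa\bb,\lambda}\propto(-1)^{k,ik}$, whereas the displayed matrix has those two row signs interchanged, and the paper's proof reaches the displayed matrix by instead simplifying $(-1)^{j,ij}(-1)^{i,ijk+1}$ to $(-1)^{k,ik}$. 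Since $(-1)^{i,ik}=-(-1)^{k,ik}$, your final sentence ("produces the matrix, after rewriting \dots as $(-1)^{i,ik}$") is internally inconsistent: with your identity the $ik$ column comes out negated relative to the statement. You need to either locate a compensating sign you dropped along the $ik$-initial-post fences (there is none in the pipeline as you wrote it), or explicitly reconcile your sign trail with the printed entries and with the simplification used in the paper's proof, rather than asserting agreement.
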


\begin{proof}
The lattice path must have $\bb_r - \aa = e_j$, so only the partial chain-link fences as in Lemma~\ref{zerotoone} with $\tau_r =ij$ will contribute to the entries of the sylvan matrix.
After moving back in the $i$-direction, there are now $\lceil \frac{3^r}{2} \rceil$ chain-link fences with $\sigma_{r+1} = i$ with partial weight $(-1)^{j,ij}$.
The shrubbery $T_0^{\aa}$ can be $\{ i \}$ or $\{ j \}$.
If $T_0^{\aa} = \{ j \}$, then $i$ is cycle-linked to $i$ with weight $1$ and to $j$ with weight $-1$.
Thus $D_{i, ij}^{\aa \bb, \lambda} = \frac{(-1)^{j,ij}}{2 \cdot 3^r} \lceil \frac{3^r}{2} \rceil$ and $D_{j, ij}^{\aa \bb, \lambda} = -\frac{(-1)^{j,ij}}{2 \cdot 3^r} \lceil \frac{3^r}{2} \rceil$.

If the chain-link fence has initial post $ik$, then by Lemma~\ref{zerotoone} there are $\lfloor \frac{3^r}{2} \rfloor$ partial chain-link fences with $\tau_r = ij$.
By Lemma~\ref{zerotoone} and the argument above, $D_{i, ik}^{\aa \bb, \lambda} = \frac{(-1)^{j,ij} (-1)^{i,ijk+1}}{2 \cdot 3^r} \lfloor \frac{3^r}{2} \rfloor = \frac{(-1)^{i,ik}}{2 \cdot 3^r} \lfloor \frac{3^r}{2} \rfloor$ and $D_{j, ik}^{\aa \bb, \lambda} = -\frac{(-1)^{j,ij} (-1)^{i,ijk+1}}{2 \cdot 3^r} \lfloor \frac{3^r}{2} \rfloor = \frac{(-1)^{k,ik}}{2 \cdot 3^r} \lfloor \frac{3^r}{2} \rfloor$.
\end{proof}

\begin{lemma}\label{zerotoonetofivetothree}
Suppose $\bb$ and $\aa$ are degree vectors such that $K^{\bb}I$ is \casezero and $K^{\aa}I$ is \casethree. Suppose also that $\lambda = (\aa, \mathbf{c}_s, \ldots, \mathbf{c}_1, \bb_r, \ldots, \bb_1, \bb)$ is a lattice path such that $K^{\mathbf{c}_{\ell}}I$ is \casefive, $K^{\bb_{\ell}}I$ is \caseone.
Then the contribution to the sylvan matrix is given below.
$$%
\HH_0 K^{\aa} \!\otimes\! \langle \xx^{\aa} \rangle
\xleftarrow{
\monomialmatrix
  {i\\ j}
  {\begin{array}{@{}l@{\ }c@{\ }r@{}}
    \quad\, i j &\ \ i k &\, \, j k\ \,
    \\
      (-1)^{j,ij} \frac{\lceil \frac{3^r}{2} \rceil}{3^r 2^{s+1}} &(-1)^{i,ik} \frac{\lfloor \frac{3^r}{2} \rfloor}{3^r 2^{s+1}} &0\\
      (-1)^{i,ij} \frac{ \lceil \frac{3^r}{2} \rceil}{3^r 2^{s+1}}& (-1)^{k,ik} \frac{ \lfloor \frac{3^r}{2} \rfloor}{3^r 2^{s+1}}&0
   \end{array}}
  {\\\\}
\!\!\!}
{\HH_1 K^{\bb} \!\otimes\! \langle \xx^{\bb} \rangle}
$$
\end{lemma}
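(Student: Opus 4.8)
The plan is to assemble the formula from three pieces already isolated in earlier lemmas: the \caseone segment (Lemma~\ref{zerotoone}), the \casefive segment (treated as in Lemma~\ref{zerototwotofivetothree}), and the terminal \casethree vertex (treated as in Lemma~\ref{zerotoonetothree}). First I would count the hedgerows along $\lambda$. The stake set $S_1^{\bb}$ is forced to be $\{\,\}$, since the only $1$-boundary in \casezero is $0$. At each of the $r$ degrees whose Koszul complex is \caseone, the shrubbery $T_1$ is forced to $\{ij,ik\}$ while $S_0$ may be any of the three pairs of vertices; at each of the $s$ degrees whose Koszul complex is \casefive, $T_1$ is forced to $\{ij\}$ while $S_0$ may be $\{i\}$ or $\{j\}$; and at $\aa$ the shrubbery $T_0$ may be $\{i\}$ or $\{j\}$. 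Hence $\Delta_{1,\lambda}I = 3^r\cdot 2^s\cdot 2 = 3^r 2^{s+1}$, which is the common denominator appearing in the displayed matrix. A chain-link fence with initial post $jk$ terminates at the first step, which is necessarily in the $i$-direction, so every entry of the $jk$ column vanishes.

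Next I would run the \caseone segment. Because \caseone is reached from \casezero only by an $i$-step and \casefive is reached from \caseone only by a $j$-step (Figure~\ref{cases}), the path moves back in the $i$-direction through all of $\bb_1,\dots,\bb_r$ and then in the $j$-direction into $\mathbf{c}_1$; consequently a partial chain-link fence survives past $\bb_r$ only if $\tau_r = ij$. Lemma~\ref{zerotoone} then provides exactly $\lceil 3^r/2\rceil$ such partial fences, of total partial weight $\lceil 3^r/2\rceil$, when the initial post is $ij$, and exactly $\lfloor 3^r/2\rfloor$ such partial fences, of total partial weight $(-1)^{i,ijk+1}\lfloor 3^r/2\rfloor$, when the initial post is $ik$.

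Then I would traverse the \casefive segment. At each $\mathbf{c}_\ell$ the only $1$-face is $ij$, so $\tau$ is forced to $ij$; the incoming lattice step, in direction $i$ or $j$, forces $\sigma$ to be the vertex $j$ or $i$ respectively, which in turn forces $S_0^{\mathbf{c}_\ell}=\{\sigma\}$; and, exactly as in the proof of Lemma~\ref{zerototwotofivetothree}, the chain-link weight $c_\sigma(ij,ST_1^{\mathbf{c}_\ell})$ and the preceding containment weight $(-1)^{\sigma\subset ij}$ are both equal to the coefficient of $\sigma$ in $\partial(ij)$, so they multiply to $1$. Thus the \casefive region changes neither the number of surviving fences nor the accumulated weight, apart from the single final containment into degree $\aa$, whose sign is $(-1)^{\sigma^{\mathrm{last}}\subset ij}$, where $\sigma^{\mathrm{last}}=j$ or $i$ according as the last step is $i$- or $j$-directed. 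At $\aa$ the argument of Lemma~\ref{zerotoonetothree} applies: one choice of $T_0^\aa$ makes $\zeta_{T_0^\aa}(\sigma^{\mathrm{last}})=0$ and terminates the fence, while the other makes $\sigma^{\mathrm{last}}$ cycle-linked to the two vertices with weights $+1$ and $-1$. Assembling the signs — using $(-1)^{i,ij}(-1)^{i,ik}(-1)^{i,ijk+1}=-1$ from the proof of Lemma~\ref{zerotoone}, together with $(-1)^{v\subset ij}=(-1)^{w,ij}$ for $\{v,w\}=\{i,j\}$ — one finds per-fence weight $(-1)^{j,ij}$ for terminal post $i$ and $(-1)^{i,ij}$ for terminal post $j$ when the initial post is $ij$, and $(-1)^{k,ik}$ for terminal post $i$ and $(-1)^{i,ik}$ for terminal post $j$ when the initial post is $ik$, independently of which of the two possible last-step directions occurs. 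Multiplying the counts $\lceil 3^r/2\rceil$, $\lfloor 3^r/2\rfloor$ by these weights and dividing by $\Delta_{1,\lambda}I=3^r 2^{s+1}$ gives the four displayed nonzero entries.

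The step I expect to be the main obstacle is the last one: keeping the three sources of signs consistent — the $(-1)^{i,ijk+1}$ carried out of the \caseone recursion, the boundary signs $(-1)^{\sigma\subset ij}$ picked up along the \casefive region and at the final containment, and the $\pm1$ from the circuit in $T_0^\aa$ — and verifying that the result is genuinely symmetric in the two allowed directions of the last step (which reflects the $i\leftrightarrow j$ symmetry of \casefive and \casethree). No new idea is needed; all the required sign identities already appear in the proofs of Lemmas~\ref{zerotoone}, \ref{zerotoonetothree}, and \ref{zerototwotofivetothree}.
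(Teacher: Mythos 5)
Your proposal is correct and follows essentially the same route as the paper's proof: it computes $\Delta_{1,\lambda}I = 3^r\cdot 2^s\cdot 2$ from the forced and free choices in the hedgerow, feeds the \caseone{} segment through Lemma~\ref{zerotoone} (only $\tau_r = ij$ survives the $j$-step into $\mathbf{c}_1$), cancels the containment/chain-link weights through the \casefive{} segment exactly as in Lemma~\ref{zerototwotofivetothree}, and finishes at $\aa$ with the circuit analysis of Lemma~\ref{zerotoonetothree}, including the same sign identity from the proof of Lemma~\ref{zerotoone} and the same check that both possible directions of the final step give the displayed entries. The only additions beyond the paper's argument are the explicit remark that the $jk$ column vanishes and the explicit bookkeeping of the final containment sign, both of which are consistent with the paper.
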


\begin{proof}
By Lemma \ref{zerotoone}, there are $\lceil \frac{3^r}{2} \rceil$ chain-link fences $\tau_r = ij$ with partial weight $1$. 
Since $\bb_r - \mathbf{c}_1 = e_j$, only these partial chain-link fences will produce chain-link fences that do not terminate.
At each degree $\mathbf{c}_{\ell}$, $T_1^{\mathbf{c}_{\ell}} = \{ ij \}$ and $S_0^{\mathbf{c}_{\ell}} = \{ i \}$ or $\{ j \}$.
Along the segment of $\lambda$ from $\mathbf{c}_1$ to $\mathbf{c}_s$, there is only one hedgerow that yields a chain-link fence of full length: the lattice path terminates unless $\mathbf{c}_{\ell-1} - \mathbf{c}_{\ell}$ is the standard basis vector corresponding to the vertex $\sigma_{\ell}$, and $\sigma_{\ell}$ must be the vertex in the stake set $S_0^{\mathbf{c}_{\ell}}$.
As in the proof of Lemma~\ref{zerototwotofivetothree}, the weights in this segment of the chain-link fence cancel.
Finally, if $\mathbf{c}_s - \aa = e_i$, $T_0^{\aa} = \{i \}$ or $\{ j \}$.
If $T_0^{\aa} = \{i\}$, then $\zeta_{T_0^{\aa}}(j) = j-i$, so $j$ is cycle-linked to $j$ with weight $1$ and to $i$ with weight $-1$.
Thus there are $\lceil \frac{3^r}{2} \rceil$ chain-link fences ending with terminal post $j$ and total weight $(-1)^{i,ij}$ and $\lceil \frac{3^r}{2} \rceil$ with terminal post $i$ and total weight $-(-1)^{i,ij} = (-1)^{j,ij}$.

Similarly, if $\mathbf{c}_s - \aa = e_j$, there are $\lceil \frac{3^r}{2} \rceil$ fences with terminal post $i$ and total weight $(-1)^{j,ij}$ and $\lceil \frac{3^r}{2} \rceil$ with terminal post $j$ and total weight $-(-1)^{j,ij} = (-1)^{i,ij}$.
Note that $\Delta_{1,\lambda}I = 3^r 2^s 2$.

For chain-link fences with initial post $ik$, 
there are $\lfloor \frac{3^r}{2} \rfloor$ chain-link fences with $\tau_r = ij$ with partial weight $(-1)^{i,ijk+1}$ by Lemma~\ref{zerotoone}.
Multiply by the partial weights described above.
\end{proof}

\begin{lemma}\label{zerotoonetotwotothree}
Suppose $\bb$ and $\aa$ are degree vectors such that $K^{\bb}I$ is \casezero and $K^{\aa}I$ is \casethree. Suppose also that $\lambda = (\aa, \mathbf{c}, \bb_r, \ldots, \bb_1, \bb)$ is a lattice path such that $K^{\mathbf{c}}I$ is \casetwo, $K^{\bb_{\ell}}I$ is \caseone.
Then the contribution to the sylvan matrix is given below.
$$%
\HH_0 K^{\aa} \!\otimes\! \langle \xx^{\aa} \rangle
\xleftarrow{
\monomialmatrix
  {i\\ j}
  {\begin{array}{@{}l@{\ }c@{\ }r@{}}
    \quad\, ij &\ \ ik &\, \, jk\ \,
    \\
      (-1)^{j,ij} \frac{\lceil \frac{3^r}{2} \rceil}{4 \cdot 3^r} &(-1)^{i,ik} \frac{\lfloor \frac{3^r}{2} \rfloor}{4\cdot 3^r}&0\\
      (-1)^{i,ij} \frac{\lceil \frac{3^r}{2} \rceil}{4\cdot 3^r} & (-1)^{k,ik} \frac{\lfloor \frac{3^r}{2} \rfloor}{4\cdot 3^r}&0 
   \end{array}}
  {\\\\}
\!\!\!}
{\HH_1 K^{\bb} \!\otimes\! \langle \xx^{\bb} \rangle}
$$
\end{lemma}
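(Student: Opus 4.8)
The plan is to recognize $\lambda$ as the lattice path of Lemma~\ref{zerotoonetothree} with one extra \casetwo vertex $\mathbf{c}$ inserted immediately before $\aa$, and to show that this insertion multiplies the hedgerow count by $2$ while leaving each surviving chain-link fence (up to a trivial extra factor) unchanged; consequently the sylvan matrix entries are exactly those of Lemma~\ref{zerotoonetothree} divided by $2$, which is the displayed matrix.

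First I would recount the hedgerows. As in Lemma~\ref{zerotoonetothree}, the stake set $S_1^{\bb}=\{\,\}$ is forced (the only $1$-boundary of \casezero is $0$), each \caseone complex $K^{\bb_\ell}I$ contributes the forced shrubbery $\{ij,ik\}$ together with three choices of the stake pair $S_0^{\bb_\ell}$, and $K^{\aa}I$ (\casethree) contributes $T_0^{\aa}\in\{\{i\},\{j\}\}$. The inserted vertex $K^{\mathbf{c}}I$ (\casetwo) contributes $T_1^{\mathbf{c}}=\{ij\}$ (its $1$-boundaries are multiples of $\partial(ij)$) and a stake set $S_0^{\mathbf{c}}$ consisting of one vertex whose complement descends to a basis of $\widetilde{C}_0/\widetilde{B}_0$; since $\widetilde{B}_0$ identifies $i$ with $j$, the singleton $\{k\}$ is ruled out and $S_0^{\mathbf{c}}\in\{\{i\},\{j\}\}$. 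Hence $\Delta_{1,\lambda}I=4\cdot 3^r$, twice the value in Lemma~\ref{zerotoonetothree}.

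Next I would compare the fences. An initial post $\tau=jk$ dies at the first step, because $\lambda$ moves back in the $i$-direction while $S_1^{\bb}=\{\,\}$ forces $\tau_0=\tau$; this gives the zero column for $jk$. For the initial posts $ij$ and $ik$, Figure~\ref{cases} shows that every step from \casezero through the \caseone complexes and into $\mathbf{c}$ moves in the $i$-direction, so up to degree $\bb_r$ each fence restricts to a partial chain-link fence of Lemma~\ref{zerotoone}; the only ones that survive past $\bb_r$ are those reaching $\tau_r=ij$, since moving in the $i$-direction sends $\tau_r=ik$ to $\sigma_{r+1}=k$, which lies in no admissible $S_0^{\mathbf{c}}$, and their counts and partial weights are exactly those recorded in Lemma~\ref{zerotoone}. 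For such a fence the step $\bb_r\to\mathbf{c}$ forces $\sigma_{r+1}=j$, hence $S_0^{\mathbf{c}}=\{j\}$ and $\tau_{r+1}=ij$, and just as in the proof of Lemma~\ref{zerototwotofivetothree} the containment $\tau_r$\,---\,$\sigma_{r+1}$ and the chain-link $\sigma_{r+1}$\,---\,$\tau_{r+1}$ are both equal to the coefficient of $j$ in $\partial(ij)$, so they contribute a combined factor $1$. Thus each surviving fence reaches $\tau_{r+1}=ij$ with exactly the partial weight it would have had at $\tau_r$ in Lemma~\ref{zerotoonetothree}; the remaining step $\mathbf{c}\to\aa$, whether in the $i$- or the $j$-direction, then forces the unique $T_0^{\aa}$ with $\zeta_{T_0^{\aa}}(\sigma_{r+2})\neq 0$ and yields one fence to each terminal post $i$ and $j$, with weights collapsing exactly as in the last step of Lemma~\ref{zerotoonetothree}. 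Dividing the resulting numerators (which match those of Lemma~\ref{zerotoonetothree}) by $\Delta_{1,\lambda}I=4\cdot 3^r$ gives the stated matrix. The only delicate point is the orientation-sign bookkeeping in the final two steps; it is identical to the computation already performed in Lemma~\ref{zerotoonetothree} using the sign identities from the proof of Lemma~\ref{zerotoone}, so nothing new is required.
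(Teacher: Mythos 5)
Your proposal is correct and follows essentially the same route as the paper's proof: it invokes Lemma~\ref{zerotoone} for the partial fences through the \caseone{} stretch, kills the $\tau_r=ik$ tails at $\mathbf{c}$ because $k$ cannot be a stake there, cancels the containment and chain-link signs at the \casetwo{} step, analyzes the final cycle-link into \casethree{}, and computes $\Delta_{1,\lambda}I=4\cdot 3^r$. The only cosmetic difference is your packaging of the result as ``Lemma~\ref{zerotoonetothree} divided by $2$'' and your citation of Lemma~\ref{zerototwotofivetothree} for the sign cancellation where the paper points to Lemma~\ref{zerototwotothree}; the substance is identical.
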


\begin{proof}
Again by Lemma~\ref{zerotoone}, there are $\lceil \frac{3^r}{2} \rceil$ chain-link fences with $\tau_r = ij$ and $\lfloor \frac{3^r}{2} \rfloor$ with $\tau_r = ik$.
After moving back in the $i$-direction to \casetwo, the chain-link fences with $\tau_r = ik$ will terminate, since $k$ cannot be a stake in degree $\mathbf{c}$.
The shrubbery $T_1^{\mathbf{c}} = \{ ij \}$, and the stake set $S_0^{\mathbf{c}} = \{ i \}$ or $\{ j \}$.
Following the proof of Lemma~\ref{zerototwotothree}, if $\mathbf{c} - \aa = e_i$, then there are in total $\lceil \frac{3^r}{2} \rceil$ chain-link fences with terminal post $j$ with weight $(-1)^{i,ij}$ and $\lceil \frac{3^r}{2} \rceil$ with terminal post $i$ with weight $-(-1)^{i,ij} = (-1)^{j,ij}$.

If $\mathbf{c} - \aa = e_j$, then there are in total $\lceil \frac{3^r}{2} \rceil$ with terminal post $i$ with weight $(-1)^{j,ij} = -(-1)^{i,ij}$ and $\lceil \frac{3^r}{2} \rceil$ with terminal post $j$ with weight $-(-1)^{j,ij} = (-1)^{i,ij}$.

Similarly, there are $\lfloor \frac{3^r}{2} \rfloor$ chain-link fences with initial post $ik$ and $\tau_r = ij$. 
These have partial weight $(-1)^{i,ijk+1}$ by Lemma \ref{zerotoone}.
Apply the arguments above to get the corresponding sylvan matrix entries.

Finally $\Delta_{1,\lambda} = 2 \cdot 2 \cdot 3^r$.
\end{proof}

\begin{lemma}\label{zerotoonetotwotofivetothree}
Suppose $\bb$ and $\aa$ are degree vectors such that $K^{\bb}I$ is \casezero and $K^{\aa}I$ is \casethree. Suppose also that $\lambda = (\aa, \aa_s, \ldots, \aa_1, \mathbf{c}, \bb_r, \ldots, \bb_1 \bb)$ is a lattice path such that $K^{\aa_{\ell}}I$ is \casefive, $K^{\mathbf{c}}I$ is \casetwo, $K^{\bb_{\ell}}I$ is \caseone.
Then the contribution to the sylvan matrix is given below.
$$%
\HH_0 K^{\aa} \!\otimes\! \langle \xx^{\aa} \rangle
\xleftarrow{
\monomialmatrix
  {i\\ j}
  {\begin{array}{@{}l@{\ }c@{\ }r@{}}
    \quad\, i j &\ \ i k &\, \, j k\ \,
    \\
      (-1)^{j,ij} \frac{\lceil \frac{3^r}{2} \rceil}{3^r \cdot 2^{s+2}} & (-1)^{i,ik} \frac{\lfloor \frac{3^r}{2} \rfloor}{3^r \cdot 2^{s+2}}&0\\
      (-1)^{i,ij} \frac{\lceil \frac{3^r}{2} \rceil}{^r \cdot 2^{s+2}} & (-1)^{k,ik} \frac{\lfloor \frac{3^r}{2} \rfloor}{3^r \cdot 2^{s+2}}&0 
   \end{array}}
  {\\\\}
\!\!\!}
{\HH_1 K^{\bb} \!\otimes\! \langle \xx^{\bb} \rangle}
$$
\end{lemma}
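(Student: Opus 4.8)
My plan is to use that the lattice path $\lambda$ decomposes into four pieces that have each already been handled: the stretch $\bb_r,\dots,\bb_1$ adjacent to $\bb$, where $K^{\bb_\ell}I$ is \caseone\ (Lemma~\ref{zerotoone}); the single vertex $\mathbf c$, where $K^{\mathbf c}I$ is \casetwo\ (as in Lemma~\ref{zerotoonetotwotothree}); the stretch $\aa_s,\dots,\aa_1$, where $K^{\aa_\ell}I$ is \casefive\ (as in Lemmas~\ref{zerototwotofivetothree} and~\ref{zerotoonetofivetothree}); and the terminal complex $K^\aa I$, which is \casethree\ (as in Lemma~\ref{zerotoonetothree}). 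I would carry the count of partial chain-link fences from Lemma~\ref{zerotoone} through these pieces in order, showing that the two middle pieces multiply the number of hedgerows by $2$ and by $2^s$ respectively but change neither the set of surviving chain-link fences nor their weights, so that the answer is that of Lemma~\ref{zerotoonetothree} with the denominator scaled up by $2^{s+1}$.

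First I would count the hedgerows along $\lambda$. As in Lemma~\ref{zerotoone}, $S_1^\bb=\{\,\}$ and each $T_1^{\bb_\ell}=\{ij,ik\}$ are forced, while each $S_0^{\bb_\ell}$ has three choices, contributing $3^r$; at $\mathbf c$ the shrubbery $T_1^{\mathbf c}$ must be $\{ij\}$ and $S_0^{\mathbf c}$ is one of $\{i\}$, $\{j\}$, contributing $2$; each vertex $\aa_\ell$ contributes a factor $2$ for the same reason; and $T_0^\aa$ is one of $\{i\}$, $\{j\}$, a final $2$. Hence $\Delta_{1,\lambda}I=3^r\cdot 2\cdot 2^s\cdot 2=3^r 2^{s+2}$, the denominator in the statement.

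Next I would track the fences. By Lemma~\ref{zerotoone}, among the $3^r$ partial chain-link fences through the first stretch, the $\lceil 3^r/2\rceil$ with initial post $ij$ and the $\lfloor 3^r/2\rfloor$ with initial post $ik$ that end with $\tau_r=ij$ carry partial weights $1$ and $(-1)^{i,ijk+1}$ respectively; the fences ending with $\tau_r=ik$ die upon entering $\mathbf c$, since $k$ is not a stake of $K^{\mathbf c}I$. Through $\mathbf c$ and every $\aa_\ell$ the only edge present is $ij$, so $\tau_\ell=ij$ throughout, and, as in the proof of Lemma~\ref{zerototwotofivetothree}, at each such degree there is a unique hedgerow producing a non-terminating fence, whose chain-link weight and the adjacent containment weight are both the coefficient of $\sigma_\ell$ in $\partial(ij)$ and hence multiply to $1$; so the partial weight is unchanged across this middle portion. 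At $K^\aa I$ the cycle-link step is identical to that of Lemma~\ref{zerotoonetothree}: according as $\aa_s-\aa$ equals $e_i$ or $e_j$, one choice of $T_0^\aa$ terminates the fence and the other produces a cycle $\pm(i-j)$, giving $\lceil 3^r/2\rceil$ fences with each terminal post from the initial-post-$ij$ fences and $\lfloor 3^r/2\rfloor$ from the initial-post-$ik$ fences, with the identity $(-1)^{i,ij}(-1)^{i,ik}(-1)^{i,ijk+1}=-1$ converting the $ij$-column signs into the $ik$-column signs exactly as there. Dividing the signed fence counts by $\Delta_{1,\lambda}I$ gives the displayed matrix; the $jk$ column is $0$ because any fence with initial post $jk$ terminates at the first step, $\lambda$ moving back in the $i$-direction there.

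The hard part will be the sign bookkeeping of the middle portion: verifying carefully that the chain-link and containment weights telescope away at every \casetwo\ and \casefive\ degree, so that only the Lemma~\ref{zerotoone} partial weight and the terminal cycle-link weight survive, and that the orientation identities from the proof of Lemma~\ref{zerotoone} are applied at the right junctures. Once that is pinned down, the statement is just Lemmas~\ref{zerotoone}, \ref{zerotoonetothree}, \ref{zerotoonetotwotothree}, and~\ref{zerotoonetofivetothree} assembled together.
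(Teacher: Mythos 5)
Your proposal is correct and follows essentially the same route as the paper: the paper's proof likewise observes that the \casefive\ (and \casetwo) segments force a unique non-terminating hedgerow whose chain-link and containment weights cancel, so the signed fence counts are exactly those of Lemmas~\ref{zerotoone} and~\ref{zerotoonetotwotothree}, and the only change is $\Delta_{1,\lambda}I = 3^r\cdot 2\cdot 2^s\cdot 2$. Your more explicit four-piece decomposition and weight bookkeeping just spell out what the paper compresses into a reference to Lemmas~\ref{zerototwotofivetothree} and~\ref{zerotoonetotwotothree}.
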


\begin{proof}
Segments of chain-link fences where the Koszul complex is \casefive are in bijection with lattice path segments and hedgerows.
In order for the chain-link fence not to terminate, the vertex $\sigma_{\ell}$ must be the vertex in the stake set $S_0$, and since $\sigma_{\ell} = \tau_{\ell-1} - e_{k_{\ell}}$, where $e_{k_{\ell}} = \bb_{\ell-1} - \bb_{\ell}$, this vertex is determined by $\lambda$. 
The weights of the linkages cancel, as in the proof of Proposition \ref{zerototwotofivetothree}.
Therefore the only difference between this case and the proof of Lemma~\ref{zerotoonetotwotothree} is $\Delta_{1, \lambda}I$.
Here, $\Delta_{1,\lambda}I = 3^r \cdot 2 \cdot 2^s \cdot 2$, where $r$ is the number of times the lattice path passes through \caseone and $s$ is the number of times the lattice path passes through \casefive.
\end{proof}




\begin{lemma}\label{zeroentrylemma}
Let $\lambda = (\aa = \bb_j, \bb_{j-1}, \ldots, \bb_1, \bb_0 = \bb)$ be a lattice path such that $K^{\bb}I$ is \casezero.
If an edge $e$ does not appear in any Koszul complex $K^{\bb_{\ell}}I$ except for $\ell = 0$, then $D_{v, e}^{\aa \bb} = 0$ for all vertices $v$.
\end{lemma}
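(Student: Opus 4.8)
The plan is to show that under the stated hypothesis there is \emph{no} chain-link fence from the edge $e$ to any vertex $v$ along $\lambda$, so that $\Phi_{ve}(\lambda)=\varnothing$ and the inner sum $\sum_{\phi\in\Phi_{ve}(\lambda)}w_\phi$ in Theorem~\ref{canonicalcombinatorialdescrip} is empty, hence zero; since the hypothesis forbids $e$ from occurring in any Koszul complex along $\lambda$ other than $K^\bb I$ itself, the contribution $D_{ve}^{\aa\bb,\lambda}$ vanishes, and when this holds for every lattice path from $\bb$ to $\aa$ we get $D_{ve}^{\aa\bb}=0$.

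First I would pin down the first post of any putative fence. Because $K^\bb I$ is \casezero\ it has no $2$-faces, so $\widetilde{B}_1(K^\bb I;\kk)=0$; hence a stake set $S_1^\bb$ must have complement a basis of $\widetilde{C}_1(K^\bb I;\kk)/\widetilde{B}_1(K^\bb I;\kk)=\widetilde{C}_1(K^\bb I;\kk)$, forcing $S_1^\bb=\{\,\}$. Then the hedge rim of $e$ satisfies $e-r(e)\in\widetilde{B}_1(K^\bb I;\kk)=0$, so $r(e)=e$ and $e$ is boundary-linked only to itself. Consequently $\tau_0=e$ in every $\phi\in\Phi_{ve}(\lambda)$, and the next face of the fence is forced to be $\sigma_1=\tau_0-e_{k_1}=e-e_{k_1}$, where $e_{k_1}=\bb-\bb_1$ is the first step of $\lambda$.

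The heart of the argument is showing this cannot be continued. If $k_1\notin e$, then $e-e_{k_1}$ is not a face at all and the fence terminates, so $\Phi_{ve}(\lambda)=\varnothing$. Otherwise write $e=\{k_1,v_1\}$, so $\sigma_1=v_1$; for the fence to proceed, $v_1$ must be chain-linked to some $\tau_1\in T_1^{\bb_1}\subseteq K_1^{\bb_1}I$ (when $\lambda$ has length $1$, cycle-linked to $\sigma$ in the shrubbery $T_0^\aa$ instead), which in particular requires an edge of $K^{\bb_1}I$ through $v_1$. Since $K^{\bb_1}I=K^{\bb-e_{k_1}}I\subseteq K^\bb I$, such an edge would be one of the two edges of the \casezero\ complex through $v_1$, namely $e$ or $\{v_1,m\}$ with $m$ the remaining variable. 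The first is excluded by hypothesis, and $\{v_1,m\}\in K^{\bb_1}I$ would mean $\xx^{\bb-e_{k_1}-e_{v_1}-e_m}=\xx^{\bb-(1,1,1)}\in I$, i.e.\ the $2$-face $ijk$ lies in $K^\bb I$, contradicting that $K^\bb I$ is \casezero\ (cf.\ Lemma~\ref{star}). Hence no edge of $K^{\bb_1}I$ meets $v_1$, so $v_1$ has no shrub there, is not a stake of any hedge in degree $\bb_1$, and is chain-linked to nothing; the fence cannot be built.

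Putting these together, $\Phi_{ve}(\lambda)=\varnothing$ for every vertex $v$, giving the claim. The step I expect to require the most care is the last one of the previous paragraph: ruling out that the vertex of $e$ opposite to the first-move direction supports an edge of the next Koszul complex. This is exactly where the absence of a $2$-face in \casezero\ and the hypothesis that $e$ disappears are both used, and it is also where a path of length $1$ needs separate (easy) bookkeeping, since there the continuation condition is a cycle-link rather than a chain-link.
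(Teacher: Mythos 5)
Your main argument is the same as the paper's proof, only written out in more detail: the stake set $S_1^{\bb}$ is forced to be empty, so $r(e)=e$ and $\tau_0=e$; if the first step of $\lambda$ is not in a direction belonging to $e$, the containment step already fails; and if it removes a vertex of $e$, the opposite vertex $v_1$ is isolated in $K^{\bb_1}I$ (the other edge through $v_1$ would force the $2$-face $ijk$ into $K^{\bb}I$, impossible since $K^{\bb}I$ is \casezero), hence $v_1$ is not a stake of any hedge at $\bb_1$ and the fence terminates. For lattice paths in which $\bb_1$ is an interior point this is exactly the paper's argument, and your extra care with the $k_1\notin e$ case and with why $v_1$ is isolated is welcome.

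The genuine problem is your parenthetical claim about paths of length one. When $\bb_1=\aa$, the continuation condition on $\sigma_1=v_1$ is a cycle-link in the shrubbery $T_0^{\aa}$, and a cycle-link in dimension $0$ does \emph{not} require any edge through $v_1$: if $T_0^{\aa}=\{u\}$ with $u\neq v_1$, the circuit $\zeta_{T_0^{\aa}}(v_1)=v_1-u$ exists even in a complex of isolated vertices, so the fence does not terminate. Indeed, in the configuration $K^{\bb}I$ \casezero, $\aa=\bb-e_i$, and $K^{\aa}I$ equal to \casefour\ or \casethreeprime, the edge $ij$ is absent from $K^{\aa}I$ yet the entries in its column are $\pm\frac{1}{3},\pm\frac{2}{3}$ (Lemma~\ref{zerotofourortwo}) or $\pm\frac{1}{2}$ (Lemma~\ref{zerotothreeprime}), not $0$. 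So the step ``cycle-linked $\ldots$ requires an edge of $K^{\bb_1}I$ through $v_1$'' is false and cannot be repaired; the lemma is really about fences that must pass the chain-link (stake) condition at an interior degree $\bb_1$, which is how the paper's proof and its application in Theorem~\ref{threevariabletheorem}(1) use it (there the edge is $jk$ and the first step is $e_i$, so the containment step fails regardless of length). You should restrict your argument to that situation rather than asserting that the length-one cycle-link case also terminates.
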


\begin{proof}
If the initial post of a chain-link fence is the edge $ij$, then $\sigma_1 = j$.
If $ij$ is not an edge in $K^{\bb_1}I$, then $j$ is an isolated vertex of $K^{\bb_1}I$ and therefore is not a stake in $S_0^{\bb_1}$.
Thus the chain-link fence terminates.
\end{proof}

\section{Examples}

\begin{example}
Let $I = \langle xy, y^3, z \rangle$.
The staircase diagram and sylvan resolution for $I$ are given below.
To compute $D_{x,zy}^{111,131}$, note that there is only one lattice path from degree $131$ to $111$, each time moving back in the $y$-direction.
$K^{131}I$ has facets $zy, yx$, and $xz$, i.e., \casezero, $K^{121}I$ has facets $yx$ and $zy$, i.e., \caseone, and $K^{111}I$ has facets $yx$ and the vertex $z$, i.e., \casetwo.
Then by Theorem \ref{threevariabletheorem}(3), $D_{x,zy}^{111,131} = (-1)^{z,zy} \big( \frac{3^2 -1}{2 \cdot 3^2} \big) = \frac{4}{9}$.

\begin{center}
\begin{tikzpicture}[scale=1,every node/.style={scale=0.8}]
      \draw[fill=gray] (0,0) -- (-1,-0.5) -- (-1,0.5) -- (0,1) -- (0,0);
      \draw[fill=gray] (-1,-0.5) -- (-2,-1) -- (-2,0) -- (-1,0.5) -- (-1,-0.5);
      \draw[fill=gray!50] (0,0) -- (1,-0.5) -- (1,0.5) -- (0,1) -- (0,0);
      \draw[fill=gray!50] (1,-0.5) -- (2,-1) -- (2,0) -- (1,0.5) -- (1,-0.5);
      \draw[fill=gray] (2,-1) -- (3,-0.5) -- (3,0.5) -- (2,0) -- (2,-1);
      \draw[fill=gray!20] (3,0.5) -- (2,0) -- (1,0.5) -- (2,1) -- (3,0.5);
      \draw[fill=gray!20] (1,0.5) -- (0,1) -- (1,1.5) -- (2,1) -- (1,0.5);
      \draw[fill=gray!20] (0,1) -- (-1,1.5) -- (0,2) -- (1,1.5) -- (0,1);
      \draw[fill=gray!20] (0,1) -- (-1,0.5) -- (-2,1) -- (-1,1.5) -- (0,1);
      \draw[fill=gray!20] (-1,0.5) -- (-2,0) -- (-3,0.5) -- (-2,1) -- (-1,0.5);
      \filldraw (0,0) circle (4pt);
      \node at (0,-0.3) {$110$};
      \filldraw[fill=gray!15] (0,1) circle (4pt);
      \node at (-0.5,1) {$111$};
      \filldraw (0,2) circle (4pt);
      \node at (-0.5,2) {$001$};
      \filldraw (3,-0.5) circle (4pt);
      \node at (3.5,-0.5) {$030$};
      \filldraw[fill=gray!15] (3,0.5) circle (4pt);
      \node at (3,0.8) {$031$};
      \filldraw[fill=white] (2,0) circle (4pt);
      \node at (2,0.3) {$131$};
      \filldraw[fill=gray!15] (2,-1) circle (4pt);
      \node at (2,-1.3) {$130$};
      \draw[->] (0,2) -- (0,3) node[right] {$z$};
       \draw[->] (-2,0) -- (-3,-0.5) node[below] {$x$};
       \draw[->] (3,-0.5) -- (4,-1) node[below] {$y$};
\end{tikzpicture}
\end{center}

{\foot
$$%
\begin{array}{@{}c@{\,}}
   \HH_{-1} K^{110} \!\otimes\! \langle xy\rangle 
\\ \oplus
\\ \HH_{-1} K^{030} \!\otimes\! \langle y^3\rangle 
\\ \oplus
\\ \HH_{-1} K^{001} \!\otimes\! \langle z\rangle 
\end{array}
\xleftarrow{\!\!\!
\monomialmatrix
  {\nothing\\\nothing\\\nothing}
  {\begin{array}{@{\,}c@{\ }c@{\ }c@{\,}}
       x\ y\ z    & x\ y    & y\ z   
  \\{}\![\,0\ 0\ 1\,] &  [\,0\ 1\,] &  [\,0\ 0\,]\!
  \\{}\![\,0\ 0\ 0\,] &  [\,1\ 0\,] &  [\,0\ 1\,]\!
  \\{}\![\,1\ 1\ 0\,] &  [\,0\ 0\,] &  [\,1\ 0\,]\!
  \end{array}}
  {\\\\\\}
\!\!\!}
{\begin{array}{@{}c@{\,}}
   \HH_0 K^{111} \!\otimes\! \langle xyz\rangle 
\\ \oplus
\\ \HH_0 K^{130} \!\otimes\! \langle xy^3\rangle 
\\ \oplus
\\ \HH_0 K^{031} \!\otimes\! \langle y^3z\rangle 
\end{array}
}
\xleftarrow{\!\!\!
\monomialmatrix
  { x\\ y\\ z\\[.2ex] x\\ y\\[.2ex] y\\ z}
  {\begin{array}{@{}l@{\ }c@{\ }r@{}}
    \quad\, zy &\ \ yx & xz\ \,
    \\
    \!
    \left[
    \begin{array}{@{}r@{}}
	   \frac{4}{9}
	\\ \frac{1}{9}
	\\ -\frac{5}{9}
    \end{array}
    \right.
    &
    \begin{array}{@{}r@{}}
	   \frac{5}{9}
	\\ -\frac{1}{9}
	\\ -\frac{4}{9}
    \end{array}
    &
    \left.
    \begin{array}{@{}c@{}}
	  \ \ 0\
	\\\ \ 0\
	\\\ \ 0\
    \end{array}
    \right]
    \!
    \\[3ex]
    \hspace{-.1ex}
    \left[
    \begin{array}{@{}r@{}}
	  -\frac{1}{2}\!\!
	\\\frac{1}{2}\!\!
    \end{array}
    \right.
    &
    \begin{array}{@{\!\!}r@{}}
	  \ \ 0
	\\\ \ 0
    \end{array}
    &
    \left.
    \begin{array}{@{}r@{}}
	   \!\!-\frac{1}{2}\,
	\\ \!\!\frac{1}{2}\,
    \end{array}
    \right]
    \hspace{-.1ex}
    \\
    \hspace{-.1ex}
    \left[
    \begin{array}{@{}r@{}}
	  \ \ \ 0
	\\\ \ \ 0
    \end{array}
    \right.
    &
    \begin{array}{@{}r@{}}
	   -\frac{1}{2}\
	\\ \frac{1}{2}\
    \end{array}
    &
    \left.
    \begin{array}{@{}r@{}}
	   \!\!-\frac{1}{2}\,
	\\ \!\!\frac{1}{2}\,
    \end{array}
    \right]
    \hspace{-.1ex}
    \end{array}}
  {\\\\\\[.6ex]\\\\[.6ex]\\\\}
\!\!\!}
{ \HH_1 K^{131} \!\otimes\! \langle xy^3z\rangle }
$$
}%
\end{example}

\begin{example}
Let $I = \langle yz,xz,xy^2,x^2y \rangle$, whose staircase diagram is given below.

\begin{center}
\begin{tikzpicture}[scale=1,every node/.style={scale=0.8}]
      \draw[fill=gray] (0,0) -- (1,0.5) -- (1,1.5) -- (0,1) -- (0,0);
      \draw[fill=gray!50] (0,0) -- (-1,0.5) -- (-1,1.5) -- (0,1) -- (0,0);
      \draw[fill=gray] (-1,0.5) -- (-2,0) -- (-2,1)  -- (-1,1.5) -- (-1,0.5);
      \draw[fill=gray!20] (-1,1.5) -- (-2,2) -- (-1,2.5) -- (0,2) -- (-1,1.5);
      \draw[fill=gray!20] (-1,1.5) -- (-2,1) -- (-3,1.5) -- (-2,2) -- (-1,1.5);
      \draw[fill=gray!20] (-1,1.5) -- (0,1) -- (1,1.5) -- (0,2) -- (-1,1.5);
      \draw[fill=gray!50] (0,2) -- (0,3) -- (-1,3.5) -- (-1,2.5) -- (0,2);
      \draw[fill=gray] (0,2) -- (1,2.5) -- (1,3.5) -- (0,3) -- (0,2);
      \draw[fill=gray!20] (0,2) -- (1,2.5) -- (2,2) -- (1,1.5) -- (0,2);
      \draw[fill=gray!20] (1,1.5) -- (2,2) -- (3,1.5) -- (2,1) -- (1,1.5);
      \draw[fill=gray!50] (1,1.5) -- (2,1) -- (2,0) -- (1,0.5) -- (1,1.5);
      \filldraw (1,2.5) circle (4pt);
      \node at (1.5,2.5) {$011$};
      \filldraw (-1,2.5) circle (4pt);
      \node at (-1.5,2.5) {$101$};
      \filldraw (-1,0.5) circle (4pt);
      \node at (-1,0.2) {$210$};
      \filldraw (1,0.5) circle (4pt);
      \node at (1,0.2) {$120$};
      \filldraw[fill=gray!15] (-1,1.5) circle (4pt);
      \node at (-1,1.8) {$211$};
      \filldraw[fill=gray!15] (1,1.5) circle (4pt);
      \node at (1,1.8) {$121$};
      \filldraw[fill=gray!15] (0,2) circle (4pt);
      \node at (0,1.7) {$111$};
      \filldraw[fill=white] (0,1) circle (4pt);
      \node at (0,1.3) {$221$};
      \filldraw[fill=gray!15] (0,0) circle (4pt);
      \node at (0,-0.3) {$220$};
      \draw[->] (0,3) -- (0,4) node[right] {$z$};
       \draw[->] (-2,1) -- (-3,0.5) node[below] {$x$};
       \draw[->] (2,1) -- (3,0.5) node[below] {$y$};
\end{tikzpicture}
\end{center}

To compute $D_{x,zy}^{220,221}$, note that there is one lattice path from $221$ to $220$, moving back in the $z$-direction. 
$K^{221}I$ is \casezero, and $K^{220}I$ has facets $x$ and $y$, i.e., \casethreeprime.
By Theorem \ref{threevariabletheorem}(2), $D_{x,zy}^{220,221} = (-1)^{y,zy}\big( \frac{1}{2} \big) = -\frac{1}{2}$.
The sylvan resolution for $I$ is given below.

{\foot
$$
\begin{array}{@{}c@{\!\!\!}}
   \HH_{{\!-\!}1} K^{011} \!\otimes\! \langle yz \rangle \quad
\\ \oplus
\\ \HH_{{\!-\!}1} K^{101} \!\otimes\! \langle xz \rangle \quad
\\ \oplus
\\ \HH_{{\!-\!}1} K^{120} \!\otimes\! \langle xy^2 \rangle \ \,
\\ \oplus
\\ \HH_{{\!-\!}1} K^{210} \!\otimes\! \langle x^2y \rangle \ \,
\end{array}
\xleftarrow{\!\!\!
\monomialmatrix
  {\nothing\!\\\nothing\!\\\nothing\!\\\nothing\!}
  {\scriptstyle
   \begin{array}{@{\!}c@{\ }c@{\hspace{-.3ex}}*2{c@{\ }c@{\ }c@{\hspace{-.3ex}}}c@{\ }c@{\!}}
       x\!\!&\!\!y
     & x\!\!&\!y&\!\!z
     & x\!\!&\!y&\!\!z
     & x\!\!&\!\!y
  \\[-.2ex][\,0&0\,]&[\,\frac{3}{4}&\frac{3}{4}&0\,]&[\,\frac{1}{4}&\frac{1}{4}&0\,]&[\,1&0\,]
  \\[-.2ex][\,0&0\,]&[\,\frac{1}{4}&\frac{1}{4}&0\,]&[\,\frac{3}{4}&\frac{3}{4}&0\,]&[\,0&1\,]
  \\[-.2ex][\,1&0\,]&[\,\,0\,& 0\, &1\,]&[\,\,0\,& 0\, &0\,]&[\,0&0\,]
  \\[-.2ex][\,0&1\,]&[\,\,0\,& 0\, &0\,]&[\,\,0\,& 0\, &1\,]&[\,0&0\,]
  \end{array}}
  {\\\\\\\\}
\!\!\!}
{\begin{array}{@{}c@{\,}}
   \HH_{\!0} K^{220} \!\otimes\! \langle x^2y^2 \rangle 
\\ \oplus
\\ \HH_{\!0} K^{121} \!\otimes\! \langle xy^2z \rangle 
\\ \oplus
\\ \HH_{\!0} K^{211} \!\otimes\! \langle x^2yz \rangle 
\\ \oplus
\\ \HH_{\!0} K^{111} \!\otimes\! \langle xyz \rangle 
\end{array}
}
\xleftarrow{\!\!\!
\monomialmatrix
  {x\\y\\[.2ex]
    x\\ y\\ z\\[.2ex]
    x\\ y\\ z\\[.2ex]
    x\\ y}
  {\begin{array}{@{\!}l@{}c@{}r@{\!}}
    \ \ zy &\ yx & xz\ \,
    \\
    \hspace{-.2ex}
    \left[
    \begin{array}{@{}r@{}}
	  -\frac{1}{2}
	\\ \frac{1}{2}
    \end{array}
    \right.
    &
    \begin{array}{@{}r@{}}
	  \ \ 0\
	\\\ \ 0\
    \end{array}
    &
    \left.
    \begin{array}{@{}r@{}}
	   -\frac{1}{2}\,
	\\ \frac{1}{2}\,
    \end{array}
    \right]
    \hspace{-.2ex}
    \\[1.5ex]
    \!
    \left[\!
    \begin{array}{@{}r@{}}
	  \ \ 0
	\\\ \ 0
	\\\ \ 0
    \end{array}
    \right.
    &
    \begin{array}{@{}r@{}}
	   \frac{1}{3}
	\\ -\frac{2}{3}
	\\ \frac{1}{3}
    \end{array}
    &
    \left.
    \begin{array}{@{}r@{}}
	   -\frac{1}{3}
	\\ -\frac{1}{3}
	\\ \frac{2}{3}
    \end{array}
    \right]
    \!
    \\[3ex]
    \!
    \left[\!
    \begin{array}{@{}r@{}}
	   \frac{1}{3}
	\\ \frac{1}{3}
	\\ -\frac{2}{3}
    \end{array}
    \right.
    &
    \begin{array}{@{}r@{}}
	   \frac{2}{3}
	\\ -\frac{1}{3}
	\\ -\frac{1}{3}
    \end{array}
    &
    \left.
    \begin{array}{@{}r@{}}
	   0\,
	\\ 0\,
	\\ 0\,
    \end{array}
    \right]
    \!
    \\[3ex]
    \hspace{-.2ex}
    \left[\!
    \begin{array}{@{}r@{}}
	  \ \ 0
	\\\ \ 0
    \end{array}
    \right.
    &
    \begin{array}{@{}r@{}}
	   \frac{1}{2}
	\\ -\frac{1}{2}
    \end{array}
    &
    \left.
    \begin{array}{@{}r@{}}
	   0\,
	\\ 0\,
    \end{array}
    \right]
    \hspace{-.2ex}
    \end{array}}
  {\\\\[.6ex]\\\\\\[.6ex]\\\\\\[.6ex]\\\\}
\!\!\!}
{ \widetilde{H}_{\!1\hspace{-.1ex}} K^{221} \!\otimes\! \langle x^2y^2z \rangle }
$$
}
\end{example}

\end{document}